\newcommand{\RR}{\mathds{R}}
\newcommand{\NN}{\mathds{N}}
\newcommand{\ind}{\mathds{1}}
\newcommand{\ZZ}{\mathbb{Z}}
\newcommand{\EE}{\mathbb{E}}
\newcommand{\PP}{\mathbb{P}}
\DeclareMathOperator{\Ker}{Ker}
\DeclareMathOperator{\Ran}{Ran}
\DeclareMathOperator{\x}{ \mathbf{x} }
\begin{document}

\title*{Bernoulli Hyperplane Percolation}
\author{Marco Aymone, Marcelo~R.~Hil\'ario, Bernardo~N.~B.~de~Lima and Vladas Sidoravicius}
\institute{Marco Aymone \at Universidade Federal de Minas Gerais, Departamento de Matem\'atica, Av.\ Ant\^onio Carlos 6627, CP 702, 31270-901,  Belo Horizonte, MG, Brazil, \email{aymone@ufmg.br}
\and Marcelo R.\ Hil\'ario \at Universidade Federal de Minas Gerais, Departamento de Matem\'atica, Av.\ Ant\^onio Carlos 6627, CP 702, 31270-901,  Belo Horizonte, MG, Brazil, \email{mhilario@mat.ufmg.br}
\and Bernardo N.B.\ de Lima \at Universidade Federal de Minas Gerais, Departamento de Matem\'atica, Av.\ Ant\^onio Carlos 6627, CP 702, 31270-901,  Belo Horizonte, MG, Brazil, \email{bnblima@mat.ufmg.br}
\and Vladas Sidoravicius \at Shanghai New York University, 1555 Century Avenue, Pudong New District, Shanghai, China 200122.}
%
%
\maketitle

\abstract*{We study a dependent site percolation model on the $n$-dimensional Euclidean lattice where, instead of single sites, entire hyperplanes are removed independently at random.
We extend the results about Bernoulli line percolation showing that the model undergoes a non-trivial phase transition and proving the existence of a transition from exponential to power-law decay within some regions of the subcritical phase.}

\abstract{We study a dependent site percolation model on the $n$-dimensional Euclidean lattice where, instead of single sites, entire hyperplanes are removed independently at random.
We extend the results about Bernoulli line percolation showing that the model undergoes a non-trivial phase transition and proving the existence of a transition from exponential to power-law decay within some regions of the subcritical phase.}

\section{Introduction}

In Bernoulli site percolation on the $\mathbb{Z}^n$-lattice, vertices are removed independently with probability $1-p$.
For $n \geq 2$, the model undergoes a phase transition at $p_c=p_c(\mathbb{Z}^n) \in (0,1)$: For $p<p_c$ all the connected components are finite almost surely whereas, for $p >p_c$, there exists an infinite connected component almost surely  \cite{broadbent57}. 
In a different percolation model on $\mathbb{Z}^n$, $n \geq 3$, called Bernoulli line percolation, instead of single sites, bi-infinite lines (or columns) of sites that are parallel to the coordinate axes are removed independently.
This model, that was introduced in the physics literature by Kantor \cite{Kantor86} and later studied both from the numerical \cite{Grassberger17, Schrenk16} and mathematical \cite{HilSid} points of view, also exhibits a phase transition as the probability of removal of single lines is varied.
However the geometric properties of the resulting connected components differ substantially in these two models.
In fact, while for Bernoulli site percolation the connectivity decay is exponential  except exactly at the critical point \cite{Duminil-Copin16, Aizenman87, Menshikov86}, for Bernoulli line percolation, transitions from exponential to power-law decay occur within the subcritical phase \cite[Theorem 1.2]{HilSid}.
In the present paper we study a higher dimensional version of the Bernoulli line percolation model  that we call {\it Bernoulli hyperplane percolation}. 
In this model, for fixed $n \geq 3$ and $k$ with $1 \leq k \leq n$ we remove from $\mathbb{Z}^n$ entire $(n-k)$-dimensional `affine hyperplanes'.
We introduce the model precisely in the following section.

\subsection{Definition of the model and main results}

In this section we define the Bernoulli hyperplane percolation model.

It can be formulated in terms of orthogonal projections onto ``the coordinate hyperplanes" as follows: For $n \geq 2$ and $1 \leq k \leq n$ we write 
\begin{equation}
\label{e:index_set}
\mathcal{I}=\mathcal{I}(k;n):=\{I  \subset [n]:  \# I = k \},
\end{equation}
where $[n] := \{1, \ldots, n\}$.
For a fixed $I \in \mathcal{I}(k,n)$ we denote $\mathbb{Z}^k_I$ the set of all the linear combinations of the canonical vectors $(e_i)_{i\in I}$ with integer coefficients, that is,
\begin{equation}
\ZZ^k_I:=\Big\{ \sum_{i\in I} x_i e_i \in \mathbb{Z}^n: x_i \in \ZZ \text{ for all $i \in I$} \Big\}.
\label{eq:zkn}
\end{equation}
Since each one of the $\binom{n}{k}$ sets $\mathbb{Z}^k_I$ is isomorphic to the $\mathbb{Z}^k$-lattice they will be called the $k$-dimensional coordinate hyperplanes of $\mathbb{Z}^n$. 
Let us define, independently on each $\mathbb{Z}^k_I$, a Bernoulli site percolation $\omega_I \in \{0,1\}^{\mathbb{Z}^k_I}$ with parameter $p_I\in [0,1]$ that is, a process in which $(\omega_I(u))_{u\in \mathbb{Z}^k_I}$ are independent Bernoulli random variables with mean $p_I \in [0,1]$.
We interpret this as, each site $u \in \mathbb{Z}^k_I$ is removed (that is $\omega_I(u) = 0$) independently with probability $1-p_I$.

Let $\pi_I\colon \mathbb{Z}^n \to \mathbb{Z}^{k}_I$ stand for the orthogonal projection from $\mathbb{Z}^n$ onto $\mathbb{Z}^k_I$
\begin{equation}
\pi_I\bigg{(}\sum_{i=1}^n x_i e_i \bigg{)}:=\sum_{i\in I} x_{i}e_{i}.
\end{equation}
The Bernoulli $(n,k)$-hyperplane percolation on $\ZZ^n$ is the process $\omega = (\omega(v))_{v\in \mathbb{Z}^n} \in \{0,1\}^{\mathbb{Z}^n}$, where
\begin{equation}
\label{eq:def_omega}
\omega(v)=\prod_{I\in \mathcal{I}}\omega_I(\pi_I(v)).
\end{equation}
We denote $\mathbf{p} = (p_I)_{I\in \mathcal{I}}$.
Each entry $p_I \in [0,1]$ is called a parameter of $\mathbf{p}$.
We write $\mathbb{P}_{\mathbf{p}}$ for the law in $\{0,1\}^{\mathbb{Z}^n}$ of the random element $\omega$ defined in \eqref{eq:def_omega}.

Since $\text{$\omega (v) = 1$ if and only if $\omega_I(\pi_I(v)) =1$ for all $I\in \mathcal{I}(k,n)$}$, we may interpret the process $\omega$ in terms of removal of sites in $\mathbb{Z}^n$ as follows: $v$ is removed (that is $\omega(v) =0$) if and only if, for at least one of the $I \in \mathcal{I}(k;n)$ its orthogonal projection into $\mathbb{Z}^k_I$ has been removed in $\omega_I$.
One can easily check that this is equivalent to perform independent removal (or drilling) of $(n-k)$-dimensional hyperplanes that are parallel to the coordinate hyperplanes.

Let $[o \leftrightarrow \infty]$ denote the event that the origin belongs to an infinite connected component of sites $v$ such that $\omega(v)=1$ and $[o \nleftrightarrow \infty]$ its complementary event.
Also denote $[o \leftrightarrow \partial B(K)]$ the event that the origin is connected to some vertex lying at $l_\infty$-distance $K$ from it via a path of sites $v$ such that $\omega(v)=1$.

Our first result generalizes Theorem 1.1 in \cite{HilSid}.
\begin{theorem}
\label{t:phase_transition}
Let $n \geq 3$ and $2\leq k\leq n-1$.
The Bernoulli $(n,k)$-hyperplane percolation model undergoes a non-trivial phase transition, that is:
If all the parameters of $\mathbf{p}$ are sufficiently close to $1$ then $\mathbb{P}_{\mathbf{p}}(o \leftrightarrow \infty)>0$.
On the other hand, when all the parameters of $\mathbf{p}$ are sufficiently close to $0$ then $\mathbb{P}_{\mathbf{p}}(o \leftrightarrow \infty) =0$.
\end{theorem}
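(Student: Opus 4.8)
The plan is to handle the two regimes by quite different means: the subcritical phase by a short projection argument, and the supercritical phase by a multi-scale renormalization, which is where the real work lies.

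For the subcritical half I would argue as follows. Since $k\ge2$ we have $p_c(\mathbb{Z}^k)>0$, so it is enough to show that $\mathbb{P}_{\mathbf{p}}(o\leftrightarrow\infty)=0$ as soon as $p_I<p_c(\mathbb{Z}^k)$ for every $I\in\mathcal{I}$. Assume $o\leftrightarrow\infty$ and let $C_o$ be the open cluster of the origin; being an infinite connected subset of the locally finite graph $\mathbb{Z}^n$ it is unbounded, so some coordinate $j\in[n]$ satisfies $\sup\{|v_j|:v\in C_o\}=\infty$. Fix any $I\in\mathcal{I}$ with $j\in I$. Then $\pi_I(C_o)$ is infinite (its points have arbitrarily large $j$-th coordinate); it is connected, because $\pi_I$ sends every edge of $\mathbb{Z}^n$ to an edge or a single vertex of $\mathbb{Z}^k_I$; and by \eqref{eq:def_omega} every $v\in C_o$ has $\omega_I(\pi_I(v))=1$, so $\pi_I(C_o)$ is made of open vertices of $\omega_I$. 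Hence $\omega_I$, a Bernoulli site percolation on $\mathbb{Z}^k_I\cong\mathbb{Z}^k$ of parameter $p_I<p_c(\mathbb{Z}^k)$, has an infinite open cluster, an event of probability zero. A union bound over the finitely many $I\in\mathcal{I}$ then gives $\mathbb{P}_{\mathbf{p}}(o\leftrightarrow\infty)=0$; in particular this applies when all parameters of $\mathbf{p}$ are sufficiently close to $0$.

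For the supercritical half, suppose $p_I\ge1-\varepsilon$ for all $I$, with $\varepsilon$ small. I would first note why a naive Peierls bound fails: if $o\nleftrightarrow\infty$, the origin is enclosed by a $*$-connected set of closed sites, each lying on some removed hyperplane $\pi_I^{-1}(u)$, but a single $(n-k)$-dimensional removed hyperplane meets a box of side $L$ in $\asymp L^{n-k}$ sites, so only $\gtrsim L^{k-1}$ distinct removed hyperplanes are needed to cover a separating surface at scale $L$, and the $\asymp L^{n-1}$ entropy of such surfaces overwhelms the gain $\varepsilon^{L^{k-1}}$ from the independent removal events. When $k=n-1$ this can be avoided altogether: restricting $\omega$ to the coordinate subspace spanned by $e_1,e_2,e_3$ and grouping the factors of \eqref{eq:def_omega} according to $I\cap\{1,2,3\}$ (which has two or three elements when $k=n-1$) presents the restriction as a three-dimensional Bernoulli line percolation with an additional independent site dilution, whose parameters all tend to $1$ as $\varepsilon\to0$; since $\mathbb{P}_{\mathbf{p}}(o\leftrightarrow\infty)$ dominates the probability of connecting the origin to infinity inside that subspace, the conclusion follows from Theorem~1.1 of \cite{HilSid} (whose renormalization tolerates the dilution). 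For general $2\le k\le n-1$ I would instead run the analogous renormalization directly on $\mathbb{Z}^n$: tile by cubes of side $L_1$, call a cube good when it and its $*$-neighbours carry open crossings in every coordinate direction, and observe that, since one removed hyperplane spoils infinitely many cubes at once, the bad-cube field has infinite-range dependence and does not dominate a subcritical product field. The remedy is to introduce scales $L_1\ll L_2\ll\cdots$ along which a cube is declared bad at scale $m$ only if it carries an anomalously dense family of removed hyperplanes of some type $I$; the probability of this can be controlled scale by scale using the mutual independence of the $(\omega_I)_{I\in\mathcal{I}}$ and of the removal events within each $\omega_I$, and one arranges the thresholds so that the resulting series over $m$ converges when $\varepsilon$ is small. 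Then with positive probability there is an infinite connected family of good cubes at every scale, which supports an infinite open path of $\omega$, giving $\mathbb{P}_{\mathbf{p}}(o\leftrightarrow\infty)>0$.

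The step I expect to be the main obstacle is the design of this hierarchy: the scales and the precise meaning of ``anomalously dense'' have to be chosen so that the long-range dependence created by the infinite removed hyperplanes is absorbed into a summable error, and that calibration---not the surrounding bookkeeping---is the technical heart of the argument and the place where the scheme of \cite{HilSid} has to be genuinely extended.
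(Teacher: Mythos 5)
Your subcritical half is correct, and is in fact shorter than the paper's route: projecting a hypothetical infinite $\omega$-cluster onto a coordinate hyperplane $\mathbb{Z}^k_I$ with $I$ containing a coordinate along which the cluster is unbounded yields an infinite connected set of $\omega_I$-open sites, which is impossible when $p_I<p_c(\mathbb{Z}^k)$; a union bound over the finitely many $I\in\mathcal{I}$ finishes it. The paper instead proves the finer Lemma \ref{lemma nao perc 1} (a single subcritical parameter plus $n-k$ well-chosen parameters strictly below $1$ suffice), which gives more information on the phase diagram, but for the second assertion of Theorem \ref{t:phase_transition} your argument is enough.

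The supercritical half has a genuine gap. For general $2\le k\le n-1$ you do not give a proof: you propose a multi-scale renormalization with scales $L_1\ll L_2\ll\cdots$ and cubes declared bad when they meet an ``anomalously dense'' family of removed hyperplanes, and you yourself flag that the calibration of the scales and density thresholds---exactly the step that must absorb the infinite-range dependence into a summable error---is left open. As written this is a programme, not an argument; nothing in the proposal shows such thresholds exist, and the entropy-versus-energy obstruction you correctly identify is not repaired by anything concrete. The $k=n-1$ case is also shakier than you present it: your reduction to a three-dimensional coordinate subspace produces Bernoulli line percolation \emph{with an additional i.i.d.\ site dilution}, and Theorem 1.1 of \cite{HilSid} does not cover that diluted model as stated, so the parenthetical ``whose renormalization tolerates the dilution'' is itself an unproved claim (for $k=n-1$ one may simply cite \cite{HilSid} on $\mathbb{Z}^n$ directly, since the model is exactly Bernoulli line percolation there).

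The idea you are missing, and the paper's actual route (Theorem \ref{theorem existence of percolation}), is to avoid any hierarchy by changing the sublattice rather than the scheme. Lemma \ref{lemma construcao da base da rede} produces orthogonal integer vectors $w_1,w_2$ such that every projection $\pi_I\circ A$ of the tilted plane $A(\mathbb{Z}^2)$, $A(x,y)=xw_1+yw_2$, is injective and expands distances by a uniform factor. Injectivity means each removed hyperplane hits the tilted plane in at most one site, so the restriction of $\omega$ to $A(\mathbb{Z}^2)$ is genuinely i.i.d.\ Bernoulli with density $\prod_I p_I$, and the long-range dependence you were fighting simply disappears; the tilted plane is then made connected by adding the short paths $\Gamma(x,y)$, which introduces only finite-range dependence, handled by the Liggett--Schonmann--Stacey domination theorem (Theorem \ref{remark Ligget}). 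To complete your proof you would either have to carry out the multi-scale construction in full---a substantially harder task than anything in \cite{HilSid}---or adopt such an ``independence by tilting'' device.
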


The proof of Theorem \ref{t:phase_transition} will be divided into two parts.
The second assertion which concerns the regime in which all the parameters are small is proved in Section \ref{sec:subcritical_phase} (see Remark \ref{remark:proof_phase_tran_2} therein).
In Section \ref{sec:supercritical_phase} we prove the first assertion which concerns the regime in which all the parameters are large (see Remark \ref{remark:proof_phase_tran_1} therein).

Our next result states that for some range of the parameter vector $\mathbf{p}$ the connectivity cannot decay faster than a power law.
\begin{theorem}
\label{theorem power law projetando em k} 
Let $n \geq 3$ and $2\leq k\leq n-1$. 
If, for all $I\in\mathcal{I}(k;n)$, the parameters $p_I<1$ are sufficiently close to $1$, then there exists $c=c(\mathbf{p})>0$ and $\alpha=\alpha(\mathbf{p})>0$ such that
\begin{equation}
\label{e:power-law}
\PP_{\mathbf{p}}(o \leftrightarrow \partial{B}(K),~ o \nleftrightarrow \infty)\geq {c}{K^{-\alpha}}
\end{equation}
for every integer $K>0$.
\end{theorem}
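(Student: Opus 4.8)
The strategy is to isolate a single event of polynomially small probability that forces the origin to connect far away but not to infinity, along the lines of the lower-bound construction for Bernoulli line percolation in \cite{HilSid}. The key mechanism is that the hyperplanes are bi-infinite, so a suitable ``favorable'' local configuration near the origin can be enclosed by a deterministic ``cage'' of removed hyperplanes whose existence has probability decaying only polynomially in the size of the cage.

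\emph{Step 1: A favorable box.} Since $k\le n-1$, the $(n-k)$-dimensional hyperplanes have dimension $\ge 1$, and a removed hyperplane intersects the box $B(K)$ in a set of diameter comparable to $K$. Fix a large box $B(L)$ with $L$ of order $K$. I will show that, conditionally on a certain event $\mathcal{G}_L$ that depends only on the random variables $\omega_I(u)$ with $\pi_I$-image inside $B(L)$, every site of $B(L/2)$ is open and the origin is connected within $B(L)$ to $\partial B(L/2)$. Because the parameters $p_I$ are close to $1$, $\mathbb{P}_{\mathbf p}(\mathcal G_L)$ is bounded below by a constant times $(\text{something})^{L^{k}}$ — this is \emph{too small}, so this naive version is not enough; instead I will only demand openness along one carefully chosen coordinate direction, exactly as in the line-percolation argument, so that the cost is polynomial. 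Concretely: pick a coordinate $j\in[n]$ and require that for every $I\in\mathcal I(k;n)$ with $j\notin I$, the single projected site $\pi_I(o)$ is open, and that there is an open straight segment from $o$ of length $L$ in direction $e_j$; the cost of each such requirement is polynomial in $L$ because moving along $e_j$ changes only the $k$ coordinates indexed by $I\ni j$, hence sweeps only $O(L)$ new projected sites on each such hyperplane, and there are only finitely many such $I$. This yields $\mathbb{P}_{\mathbf p}(o \leftrightarrow \partial B(L/2)) \ge c\,L^{-\alpha_1}$ for constants $c,\alpha_1>0$ depending on $\mathbf p$.

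\emph{Step 2: Simultaneous disconnection by a hyperplane cage.} Next I condition additionally on a deterministic family of removed hyperplanes surrounding $B(L)$ so that no open path can leave $B(L)$. Here I use that $k\le n-1$: for each of the $n$ coordinate directions $j$, place two $(n-k)$-dimensional hyperplanes at signed coordinate distance $L$ from the origin (one on each side) orthogonal-complementing direction $j$; the union of these $2n$ hyperplanes disconnects $B(L)$ from infinity in the site lattice restricted to open sites, because any nearest-neighbor path to infinity must cross one of the $2n$ coordinate slabs $\{x : x_j = \pm L\}$, and on that slab the relevant hyperplane is entirely removed. Requiring these $2n$ hyperplanes to be removed costs the product over the corresponding indices $I$ of $(1-p_I)$, a constant in $L$ (not a power!), times the probability that the \emph{finitely many} projected sites needed are closed — again a constant. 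Crucially, the events in Step 1 (certain projected sites open, within $B(L)$) and Step 2 (certain disjoint projected sites closed) involve \emph{disjoint} collections of the independent variables $\omega_I(u)$, provided the cage hyperplanes are chosen not to pass through $B(L/2)$, so their probabilities multiply. Since $K>0$ is given, choose $L = K$ (or a fixed multiple); the event ``$o \leftrightarrow \partial B(K)$ inside $B(L)$ and $o$ caged'' then implies $[o\leftrightarrow\partial B(K),\,o\nleftrightarrow\infty]$ and has probability at least $c\,K^{-\alpha}$.

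\emph{Main obstacle.} The delicate point is \textbf{Step 1}: one must route the open connection from $o$ to distance $K$ using only variables whose total number grows polynomially, rather than like $L^{k}$. Demanding a full open box is exponentially costly, so the path must be chosen cleverly — essentially a path that, for each relevant hyperplane index $I$, has projection confined to a \emph{lower-dimensional} (ideally one-dimensional) subset, so that only $O(L)$ fresh projected sites are touched per hyperplane. Verifying that such a path exists and is genuinely connected in $\mathbb Z^n$, while simultaneously keeping its supporting variables disjoint from those of the cage in Step 2, is the crux; once this combinatorial routing is pinned down, the probability estimates are immediate from independence and the near-criticality $p_I<1$ of the parameters. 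A secondary (routine) point is checking that the $2n$-hyperplane cage really separates $B(L)$ from infinity for \emph{all} simultaneously, which follows from the pigeonhole observation that every escaping path must exit some coordinate slab.
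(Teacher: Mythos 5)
Both of your key steps contain genuine gaps, and the first one is fatal to the polynomial bound you are trying to prove. In Step~1 you require an open straight segment of length $L$ in the direction $e_j$ and claim its cost is polynomial because only $O(L)$ projected sites are swept per hyperplane index. But for every $I\in\mathcal{I}(k;n)$ with $j\in I$ (there are $\binom{n-1}{k-1}\geq 1$ of them), the projection $\pi_I$ of that segment consists of $L+1$ \emph{distinct} sites of $\mathbb{Z}^k_I$, each required to be $\omega_I$-open independently; the probability is at most $p_I^{\,L}$, which is exponentially small in $L$, not polynomial. You have conflated ``polynomially many variables are touched'' with ``the event has polynomial probability.'' This is precisely why the paper does not pay for the long connection at all: the connection over distance $K$ is produced by supercritical crossings in the projections associated with indices containing the distinguished coordinate (via the good-box renormalization, Proposition~\ref{lemma dos caminhos}, Lemmas~\ref{l:path_of_good}--\ref{lemma dominacao estocastica} and Theorem~\ref{remark Ligget}), so its probability is bounded below by a constant; only the indices $I$ \emph{not} containing the distinguished coordinate are paid for, and the jagged-wall/spiral geometry of the region $\mathcal{Z}^2(K;N)$ is designed exactly so that its projection onto each such $\mathbb{Z}^k_I$ contains only $O(\log K)$ sites, giving the cost $K^{-\alpha}$ (events $\mathcal{O}_2,\mathcal{O}_3$). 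Without an idea of this type, your construction can only yield an exponential lower bound.

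Step~2 also fails as stated. A removed $(n-k)$-dimensional hyperplane $\mathcal{P}_I(u)=\pi_I^{-1}(u)$ is an $(n-k)$-dimensional affine subspace; for $k\geq 2$ it does not cover the $(n-1)$-dimensional coordinate slab $\{x: x_j=\pm L\}$, so a path escaping $B(L)$ can simply cross that slab at a point avoiding your $2n$ removed hyperplanes (already for $n=3$, $k=2$ a single removed line does not block a plane). Closing an entire slab would require infinitely many closed sites in some $\omega_I$, an event of probability zero. Enforcing $[o\nleftrightarrow\infty]$ genuinely requires the mechanism of Lemma~\ref{lemma nao perc}: a closed circuit around the origin in one two-dimensional projection (event $\mathcal{O}_4$, whose perimeter is only $O(N\log K)$, hence polynomial cost) together with a surrounding set in the complementary coordinates that is $\mathcal{P}_I(x)$-closed for all relevant $x$, which exists almost surely by a Borel--Cantelli argument (event $\mathcal{O}_5$). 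So both the connection estimate and the disconnection construction need to be replaced before the probability bookkeeping you describe can be carried out.
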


In the special case $k=2$ we can determine more precisely some regions of the parameter space for which power-law decay holds:

\begin{theorem}
\label{t:poly_dec_1} 
Let $n\geq 3$ and $k=2$ and assume that $p_I >0$ for every $I \in \mathcal{I}(k;n)$.
Denote $I_j:=\{1,j\}$ and assume that $p_{I_j}>p_c(\ZZ^2)$  for every $2\leq j\leq n$.
Assume further that $p_I<1$ for some $I\in\mathcal{I}\setminus\{I_2, \ldots, I_n\}$. 
Then there exist $c=c(\mathbf{p})>0$ and $\alpha=\alpha(\mathbf{p})>0$ such that  \eqref{e:power-law} holds for every integer $K>0$.
\end{theorem}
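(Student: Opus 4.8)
The plan is to exhibit, for every large integer $K$, an event $E_K$ (measurable with respect to $(\omega_I)_{I\in\mathcal I}$) with $E_K\subseteq\{o\leftrightarrow\partial B(K)\}\cap\{o\nleftrightarrow\infty\}$ and $\PP_{\mathbf p}(E_K)\geq cK^{-\alpha}$, so that \eqref{e:power-law} follows by inclusion. Since $I_2,\dots,I_n$ are exactly the index sets containing $1$, any $I^\ast\in\mathcal I\setminus\{I_2,\dots,I_n\}$ with $p_{I^\ast}<1$ satisfies $1\notin I^\ast$; relabelling the coordinates $2,\dots,n$ (which keeps coordinate $1$ fixed and preserves every hypothesis) we may assume $I^\ast=\{2,3\}$, so that $p_{\{1,2\}},p_{\{1,3\}}>p_c(\mathbb Z^2)$ while $p_{\{2,3\}}<1$. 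I describe the construction for $n=3$, which carries all the ideas; for $n\geq4$ one restricts to the coordinate sublattice spanned by $e_1,e_2,e_3$ — on which the process, after conditioning on a favourable value of the finitely many constant and single–coordinate thinning factors contributed by the remaining index sets, dominates from below an independent copy of the three–dimensional model — the only extra task being to keep the coordinates $v_4,\dots,v_n$ of the cluster of $o$ bounded, which is done exactly as in the trapping step below at the cost of an extra polynomial factor.

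\emph{Trapping.} Let $\ell=\ell(K)=\lceil C_0\log K\rceil$ with $C_0$ a large constant fixed later, and let $A$ be the event that $\omega_{\{2,3\}}$ equals $1$ along a fixed monotone staircase path $\Gamma\subset\mathbb Z^2_{\{2,3\}}$ that starts at $\pi_{\{2,3\}}(o)$ and advances by $\ell$ in each of the two coordinate directions, and equals $0$ along a circuit of $\mathbb Z^2_{\{2,3\}}$ surrounding $\Gamma$. Then $\PP_{\mathbf p}(A)\geq p_{\{2,3\}}^{\,C_1\ell}(1-p_{\{2,3\}})^{C_1\ell}=K^{-\alpha_1}$ for suitable $C_1,\alpha_1>0$. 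On $A$ the $\omega_{\{2,3\}}$–cluster $C^\ast$ of $\pi_{\{2,3\}}(o)$ is finite and contains $\Gamma$. Since $\pi_{\{2,3\}}$ sends the open cluster of $o$ onto a connected subset of $\mathbb Z^2_{\{2,3\}}$ through $\pi_{\{2,3\}}(o)$ consisting of sites $u$ with $\omega_{\{2,3\}}(u)=1$, on $A$ this cluster is contained in the tube $T:=\{v\in\mathbb Z^3:\pi_{\{2,3\}}(v)\in C^\ast\}$. A vertex $v\in T$ is open iff $\omega_{\{1,2\}}(\pi_{\{1,2\}}(v))=\omega_{\{1,3\}}(\pi_{\{1,3\}}(v))=1$; this open field is i.i.d.\ across the slices $\{v_1=c\}$, and each slice is entirely closed with a fixed positive probability (for instance as soon as $\omega_{\{1,2\}}$ vanishes on $\{c\}\times\pi_2(C^\ast)$, a finite set). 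A Borel–Cantelli argument along $e_1$ then shows that $T$ carries no infinite cluster, so on $A$ one already has $o\nleftrightarrow\infty$.

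\emph{Reaching distance $K$.} It remains to prove $\PP_{\mathbf p}\big(o\leftrightarrow\{v_1=K\}\text{ in }T\,\big|\,A\big)\geq c'>0$ for all large $K$; combined with the previous step and $\{v_1=K\}\subseteq\partial B(K)$ this gives $\PP_{\mathbf p}(E_K)\geq c'K^{-\alpha_1}$ for $E_K:=A\cap\{o\leftrightarrow\{v_1=K\}\text{ in }T\}$. On $A$ the tube $T$ contains $T_\Gamma:=\mathbb Z e_1\times\Gamma$, a two–dimensional strip of width $\sim\ell$. The role of choosing $\Gamma$ to be a \emph{staircase} is that, because $\Gamma$ advances in both coordinate directions, no single removed $\{1,2\}$– or $\{1,3\}$–line can close a whole slice $\{v_1=c\}\cap T_\Gamma$: such a slice is entirely closed only if the numbers of $v_2$– and $v_3$–coordinates at which $\omega_{\{1,2\}}(c,\cdot)$, resp.\ $\omega_{\{1,3\}}(c,\cdot)$, vanish add up to at least $\sim\ell$, an event of probability $\leq e^{-c_2\ell}$ since $p_{\{1,2\}}+p_{\{1,3\}}>2p_c(\mathbb Z^2)>1$. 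More generally, comparing the (finitely dependent) open field on $T_\Gamma$ with supercritical planar percolation and using the exponential decay of dual connectivities throughout the supercritical phase, a Peierls/coarse–graining argument bounds by $e^{-c_3\ell}$ the probability that $T_\Gamma$ contains a dual path separating $\{v_1\le c\}$ from $\{v_1\ge c+1\}$, uniformly in $c$. A union bound over $c\in\{0,\dots,K\}$ gives no such cut with probability $\geq1-C_4Ke^{-c_3\ell}$, and on that event the open cluster of $o$ in $T_\Gamma$ reaches $\{v_1=K\}$ with probability bounded below; choosing $C_0$ with $c_3C_0>1$ makes this work and yields $\PP_{\mathbf p}(E_K)\geq cK^{-\alpha}$.

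\emph{Main obstacle.} The delicate point is the last step. The field induced on $T_\Gamma$ is not i.i.d.\ Bernoulli percolation but a finitely dependent field whose line defects are inherited from the two supercritical planes, and one must show that it crosses a $K\times\ell$ rectangle with probability tending to $1$ when $\ell\sim\log K$. The large–deviation estimate above handles ``straight'' cuts, but ruling out general dual blocking paths — and doing so uniformly over $p_{\{1,2\}},p_{\{1,3\}}$ in the whole interval $(p_c(\mathbb Z^2),1)$ rather than merely near $1$ — is where the work lies: it forces a block decomposition of the strip together with quantitative input from the theory of supercritical planar percolation, and it is the higher–dimensional analogue of the crossing estimate underlying the Bernoulli line percolation result quoted in the introduction.
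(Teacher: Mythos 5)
Your trapping step is fine (it parallels the paper's events $\mathcal{O}_4$, $\mathcal{O}_5$ and Lemma \ref{lemma nao perc}), but the proposal has two genuine gaps, and the first one kills precisely the new content of the theorem. The reduction of $n\geq 4$ to $n=3$ does not work: if you confine the cluster of $o$ to a slab where $v_4,\dots,v_n$ stay in a bounded window, then for each $j\geq 4$ the factor $\omega_{\{1,j\}}$ restricted to that slab depends only on $(v_1,v_j)$ with $v_j$ ranging over a bounded set, so entire cross-sections $\{v_1=c\}$ of the slab are closed, independently over $c$, with probability bounded away from $0$ whenever $p_{\{1,j\}}<1$ (which the hypotheses allow, e.g.\ $p_{\{1,j\}}=0.7$). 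Since a path reaching $\{v_1=NK\}$ must traverse every such cross-section, its conditional probability inside the slab is exponentially small in $K$, not polynomial; the "single-coordinate thinning factors" you propose to condition on are $\Theta(K)$ many, not finitely many. This is exactly why the paper's connecting path is forced to wander in \emph{all} directions $e_2,\dots,e_n$ (the spiral sequence $p_t$ of \eqref{eq:pt} and the jagged wall $\mathcal{Z}^2(K;N)$), exploiting the supercriticality of every plane $\mathbb{Z}^2_{I_j}$ simultaneously.

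The second gap is the crossing estimate for the staircase strip $T_\Gamma$, which you yourself flag as "where the work lies", and the route you sketch cannot be repaired as stated: the induced field $\eta(c,s)=\omega_{\{1,2\}}(c,\gamma_2(s))\,\omega_{\{1,3\}}(c,\gamma_3(s))$ has marginal density $p_{\{1,2\}}p_{\{1,3\}}$, which may lie well below $p_c(\mathbb{Z}^2)$, and its one-site conditional probabilities are not close to $1$, so no comparison (Liggett--Schonmann--Stacey or otherwise) with \emph{supercritical} planar percolation is available; your large-deviation bound only excludes straight cuts, while for a genuinely subcritical-density dependent field widthwise blocking cuts in an $O(\log K)$-wide strip would typically proliferate. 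The correct mechanism is to use the supercriticality of each plane $\mathbb{Z}^2_{I_j}$ \emph{separately} (crossings of $N\times N$ boxes as in Remark \ref{remark correlation length}) and then merge the planar crossings into one spatial open path; this merging is the content of Proposition \ref{lemma dos caminhos} (the generalized cautious-hikers Lemma \ref{lemma dos random walks}), which, combined with the good-box renormalization of Definition \ref{definicao caixa boa}, Lemma \ref{lemma da cortina chi dependencia} and Theorem \ref{remark Ligget}, is how the paper obtains the long open path at polynomial cost. Without an analogue of that combination lemma, your $n=3$ argument is incomplete, and with it you would essentially be reproducing the paper's proof.
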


Having stated our main results, we now provide some remarks about the contribution of this paper.

The case $k=1$ does not admit a (non-trivial) phase transition.
In fact, as soon as $p_I >0$ for every $I$, drilling $(n-1)$-dimensional hyperplanes has the effect of splitting the lattice into finite rectangles.
The case $k=n$ corresponds to Bernoulli site percolation (here we interpret $0$-dimensional hyperplanes as being just single sites).
For these reasons in the above statements we have $2\leq k \leq n-1$.
Moreover, the case $k=n-1$ corresponds to the Bernoulli line percolation model studied in \cite{HilSid} so the results are not novel in this specific case.

Theorem \ref{t:poly_dec_1}, that only concerns the case $k=2$, states that when the Bernoulli site percolation processes  $\omega_I$ defined on the $n-1$ coordinate planes that contain $e_1$ are all supercritical, then power-law decay holds regardless of the values of the parameters fixed for the Bernoulli percolation processes in the remaining $\binom{n-1}{2}$ hyperplanes (provided that at least one of these is smaller than $1$). 
It generalizes the first statement in \cite[Theorem 1.2]{HilSid}.
Of course, the arbitrary choice of fixing the `direction' $1$ is made purely for convenience; any other choice would result in an analogous result. 

Still for $k=2$, the same argument used to prove Equation (1.3) in \cite{HilSid} can be employed  to show that if $p_{I} < p_c(\mathbb{Z}^2)$ for at least $\binom{n-1}{2}+1$ parameters, then $\PP_{\mathbf{p}}(o \leftrightarrow \partial{B}(K))$ is exponentially small in $K$, see Remark \ref{remark:exp_dec} for a sketch of the argument.
Hence, like in Bernoulli line percolation, there is a transition from exponential to power-law decay in the subcritical phase.
This contrasts with the classical Bernoulli site percolation in which exponential decay holds everywhere outside the critical point \cite{Aizenman87, Duminil-Copin16, Menshikov86} and raises the question whether the phase transition for Bernoulli hyperplane percolation is sharp in the sense that the expected size of the cluster containing a vertex is finite in the whole subcritical regime.
For Bernoulli site percolation sharpness is an immediate consequence of exponential decay.

Let us now briefly comment on some related results obtained for percolation models presenting infinite-range correlations along columns.
One of these models is the so called Winkler's percolation \cite{winkler_2000} for which a power-law decay as in \eqref{e:power-law} has been proved by G\'acs \cite{gacs_2000} whenever the model is supercritical.
For another model called corner percolation, although all the connected components are finite almost surely, Pete \cite{G.Pete} has obtained a power-law lower bound for $\PP_{\mathbf{p}}(o \leftrightarrow \partial{B}(K))$ .
A variation of Bernoulli line percolation was studied in \cite{Grassberger17_2}.
In this paper, only columns that extend along a single direction are removed and Bernoulli line percolation is performed on the remaining graph.
Here \eqref{e:power-law} holds in some parts of the subcritical phase and throughout the whole supercritical phase.

We finish this section presenting a brief overview of the remainder of the paper.
In Section \ref{sec:notation} we introduce some of the notation that will need.
Section \ref{sec:subcritical_phase} is devoted to the study of the subcritical phase, that is, the regime in which infinite clusters occur with null probability.
Lemma \ref{lemma nao perc 1} identifies values of the parameters which falls inside the subcritical phase and thus implies the second assertion in Theorem \ref{t:phase_transition}.
We also present other results that add more information about the subcritical phase including bounds on the parameters that guarantee exponential decay of correlations (Remark \ref{remark:exp_dec}).
In Section \ref{sec:supercritical_phase} we prove the existence of the supercritical phase, that is, the regime in which there exists at least one infinite open cluster with probability one.
This corresponds to the first assertion in Theorem \ref{t:phase_transition}.
In Section \ref{sec:polynomial_decay} we present the proof of Theorem \ref{t:poly_dec_1} and show how to modify it in order to obtain a proof for Theorem \ref{theorem power law projetando em k}.
These are perhaps the most interesting results in our work since they highlight the presence of power-law decay of connectivity in some regimes and show that the transition from the subcritical to the supercritical phase is more delicate than that exhibited by ordinary percolation models with finite range dependencies.

\subsection{Notation}
\label{sec:notation}

In this section we make precise the notation and definitions used in the previous section and introduce some further notation that will be used in the remainder of the paper.

The $n$-dimensional Euclidean lattice (here called simply the $\mathbb{Z}^n$-lattice) is the pair $\mathbb{Z}^n = (V(\mathbb{Z}^n), E(\mathbb{Z}^n))$ whose  vertex set $V(\mathbb{Z}^n)$ is composed of vectors $x=(x_1,\ldots, x_n) \in \mathbb{R}^n$ having integer coordinates $x_i$ and $E(\mathbb{Z}^n)$ is the set of pairs of vertices in $V(\mathbb{Z}^n)$ lying at Euclidean distance one from each other, called edges (or bonds).
Vertices $x \in V(\mathbb{Z}^n)$ will also be called sites.
We abuse notation using $\mathbb{Z}^n$ to refer both to the $\mathbb{Z}^n$-lattice and to its set of vertices.
We denote $\|x\|=\sum_{i=1}^n |x_i|$ the $l_1$-norm of  $x=(x_1,\ldots, x_n) \in\ZZ^n$ .

The vertex $o =(0,\ldots,0) \in \mathbb{Z}^n$ will be called the origin.
Note that it also belongs to each one of the $\mathbb{Z}^k_I$.
We write $B(K):=[-K,K]^n\cap \ZZ^n$ for the $l_\infty$-ball of radius $K$ centered at $o$.
For a given $I \in \mathcal{I}(k;n)$, we will also use $B(K)$ instead of $\pi_I(B(K))$ for the corresponding box contained in the hyperplanes $\mathbb{Z}^k_I$ (recall the definitions of the index set $\mathcal{I}(k;n)$ in \eqref{e:index_set} and of the $k$-dimensional coordinate hyperplanes $\mathbb{Z}^k_I$ in \eqref{eq:zkn}).

Consider $\Omega = \{0,1\}^{\mathbb{Z}^n}$ endowed with the canonical sigma-field $\mathcal{F}$ generated by the cylinder sets.
A probability measure $\mu$ on $(\Omega, \mathcal{F})$ is called a site percolation on $\mathbb{Z}^n$.
Any random element $(\omega(v))_{v\in \mathbb{Z}^n}$ which is distributed as $\mu$ is also called a percolation process in $\mathbb{Z}^n$.
For $p \in [0,1]$, we denote $\mathbb{P}_p$ be the probability measure in $(\Omega, \mathcal{F})$ under which the projections $(\omega(v))_{v\in \mathbb{Z}^n}$ are i.i.d.\ Bernoulli random variables of mean $p$.
This is the so-called Bernoulli site percolation on $\mathbb{Z}^n$ with parameter $p$.

Fix integers $n \geq 3$ and $2 \leq k \leq n-1$ and let $p_I \in [0,1]$ for each $I \in \mathcal{I}(k;n)$.
Consider $\Omega_I =\{0,1\}^{\mathbb{Z}^k_I}$ endowed with the canonical sigma-field $\mathcal{F}_I$. 
The definition of Bernoulli site percolation with parameter $p_I$ extends to $\mathbb{Z}^k_I$ naturally yielding the measures $\mathbb{P}_{p_I}$ on $(\Omega_I, \mathcal{F}_I)$.
The probability measure $\mathbb{P}_{\mathbf{p}}$ which was defined below \eqref{eq:def_omega}, is the unique measure in $(\Omega, \mathcal{F})$ satisfying
\begin{equation}
\label{e:relation_measures}
\mathbb{P}_{\mathbf{p}} = \big(\otimes_{I \in \mathcal{I}_{k,n}} \mathbb{P}_{p_I} \big) \circ \omega^{-1} \end{equation}
where $\omega$ is defined in \eqref{eq:def_omega}.
We will denote $\EE_\mathbf{p} ( \cdot )$ the expectation with respect to $\mathbb{P}_{\mathbf{P}}$.

Let $G$ be either $\mathbb{Z}^n$ or $\mathbb{Z}^k_I$ for some $I \in \mathcal{I}$.
Given $\eta = (\eta(x))_{G} 
\in \{0,1\}^{G}$, we say that a site $x \in G$ is $\eta$-open when $\eta(x) =1$.
Otherwise $x$ is said $\eta$-closed.
A site $x \in \mathbb{Z}^n$ is said $\omega_I$-open if $\pi_I(x)\in\ZZ_I^k$ is $\omega_I$-open, i.e., if $\omega_I(\pi_I(x))=1$. 
Otherwise, $x$ is said $\omega_I$-closed. 
Since $x$ is $\omega$-open if and only if it is $\omega_I$-open for all $I\in\mathcal{I}(k,n)$ and since the percolation processes $\omega_I$ are independent we have 
\begin{equation}
\label{e:exp_p}
\EE_\mathbf{p} (\omega(o)) = \PP_{\mathbf{p}}(\omega(x)=1)=\prod_{I\in\mathcal{I}}p_I.
\end{equation}
Therefore one can show that
\begin{equation}
\label{e:box_open}
\PP_\mathbf{p}\big(\mbox{all sites in }B(R)\mbox { are open} \big) = \big[\EE_\mathbf{p}(\omega(o))\big]^{(2R+1)^k}
\end{equation}
which is similar to the Bernoulli site percolation case where the exponent $(2R+1)^k$ has to be replaced by $(2R+1)^n$.

Let $G$ be either $\mathbb{Z}^n$ or $\mathbb{Z}^k_I$ for some $I \in \mathcal{I}$.
A path in $G$ is either a finite set $\Gamma = \{x_0, x_1, \ldots, x_m\}$ or an infinite set $\Gamma = \{x_0, x_1, x_2 \ldots\}$ such that $x_i \in G$ for all $i$, $x_i \neq x_j$ whenever $i\neq j$ and $||x_i - x_{i-1}||=1$ for all $i=1, \ldots, m$.
For $\eta=(\eta(x))_{x\in G} \in \{0,1\}^G$ we denote $\mathcal{V}_{\eta} = \mathcal{V}_{\eta}(G)=\{x\in G : \eta(x)=1\}$. 
For such $\eta$ we say that $x,y\in\mathcal{V}_{\eta}$ are connected and write $x \leftrightarrow y$ if there exists a path composed exclusively of $\eta$-open sites that starts at $x$ and finishes at $y$.
Otherwise, we write $x \nleftrightarrow y$.
For $A \subset \mathbb{Z}^n$, we write $x \leftrightarrow A$ if $x\leftrightarrow y$ for some $y\in A$. 
We say that $\mathcal{G}\subset \mathcal{V}_{\eta}$ is a connected component (or a cluster) of $\mathcal{V}_{\eta}$ when every pair of sites $x,y\in\mathcal{G}$ is such that $x\leftrightarrow y$.
In addition, for $x\in G$ we denote by $\mathcal{V}_\eta(x) = \mathcal{V}_\eta(x; G)$ the maximal connected component in $\mathcal{V}_\eta$ containing $x$, that is, $\mathcal{V}_\eta(x; G) = \{y \in G \colon  x\leftrightarrow y \text{ in $\eta$}\}$.
We say that a site $x$ belongs to an infinite connected component in $\mathcal{V}_\eta$ and denote it $x \leftrightarrow \infty$ if $\# \mathcal{V}_{\eta}(x) = \infty$.

We say that the Bernoulli $(n,k)$-hyperplane percolation exhibits a non-trivial phase transition if there exists $\mathbf{p}=(p_I)_{I \in \mathcal{I}}$ with $p_I>0$ for all $I\in\mathcal{I}$ and $\mathbf{q}=(q_I)_{I \in \mathcal{I}}$ with $q_I<1$ for all $I\in\mathcal{I}$, such that $\PP_{\mathbf{p}}(o\leftrightarrow \infty)=0$ and $\PP_{\mathbf{q}}(o\leftrightarrow \infty)>0$.
The set of all $\mathbf{p}$ for which $\PP_{\mathbf{q}}(o\leftrightarrow \infty)>0$ is called the supercritical phase whereas the set of all $\mathbf{p}$ for which $\PP_{\mathbf{p}}(o\leftrightarrow \infty)=0$ is the subcritical phase.

For $x\in\ZZ^k_I$ we denote by 
\begin{equation}
\label{eq:affine_hyperplanes}
\mathcal{P}_I(x):=\pi_I^{-1}(x)= \{z \in \ZZ^n \colon\, \pi_I(z)=x\}
\end{equation} 
the pre-image of $x$ under $\pi_I$, so that $\{ \mathcal{P}_I(x) \colon x\in \mathbb{Z}^k_I \}$ foliates $\mathbb{Z}^n$ into disjoint `parallel $(n-k)$-dimensional afine hyperplanes'.
Observe that
\[
\text{
$\inf\big\{\|v-w\|:v\in \mathcal{P}_I(x),w\in \mathcal{P}_I(y)\big\}=1$
if and only if $\|x-y\|=1$.}
\]
Let $I\in\mathcal{I}(k;n)$. The graph $\mathcal{H}=\mathcal{H}(I)$ with vertices $V(\mathcal{H}):=\{\mathcal{P}_I(x):x\in\ZZ^k_I\}$, and with edges linking pairs of vertices $\mathcal{P}_I(x)$ and $\mathcal{P}_I(y)$ satisfying
\[
\inf\{||v-w||:v\in \mathcal{P}_I(x),w\in \mathcal{P}_I(y)\}=1
\]
is called a $\ZZ^k$-decomposition of $\ZZ^n$.
Notice that $\mathcal{H}$ is isomorphic to $\ZZ^k$.
 
For a fixed $x\in\ZZ^k_I$, the projection $\pi_{[n]\setminus I}:\ZZ^n\to\ZZ_{[n]\setminus I}^{n-k}$ maps $\mathcal{P}_I(x)$ isomorphically to $\ZZ_{[n]\setminus I}^{n-k}$ which is, in turn, isomorphic to $ \ZZ^{n-k}$.
Thus for each $v\in \ZZ_{[n]\setminus I}^{n-k}$, there exists a unique $u\in \mathcal{P}_I(x)$ such that $\pi_{[n]\setminus I}(u)=v$. 
We say that $v$ is $\mathcal{P}_I(x)$-closed if there exists $J\in \mathcal{I}(n;k) \setminus \{I\}$ for which $u$ is $\omega_J$-closed.
Otherwise we say that $v$ is $\mathcal{P}_{I}(x)$-open.
Observe that if $x$ and $y$ are different vertices in $\mathbb{Z}^k_I$ and $v \in \mathbb{Z}^{n-k}_{[n]\setminus I}$ we might have that $v$ is $\mathcal{P}_I(x)$-open and $\mathcal{P}_I(y)$-closed.

We say that $T\subset \ZZ^n$ surrounds the origin if there exists a partition of $\ZZ^n \setminus T = A \cup B$ such that:
\begin{eqnarray}
 \text{$A$ is connected, $o\in A$ and $\# A <\infty$;} \label{e:surround_1}\\
\text{$\inf\{\|a-b\|:a\in A, b\in B \}\geq 2$.}\label{e:surround_2}
\end{eqnarray}
Similar definitions can be made replacing $\mathbb{Z}^n$ by any of the $\mathbb{Z}^k_I$.
A useful fact that we will use below it that $\mathcal{V}_{\omega}(o)$ is finite if and only if there exists $T\subset \ZZ^n$ that surrounds the origin and whose sites are all $\omega$-closed (and similarly for $\mathcal{V}_{\omega_I}$).

\section{The existence of a subcritical phase}
\label{sec:subcritical_phase}

This section is dedicated to the existence of a subcritical phase.
Indeed we show that Bernoulli hyperplane percolation does not present infinite connected components a.s.\ when some of the parameters of $\mathbf{p}$ are sufficiently small.
This corresponds to the second assertion in Theorem \ref{t:phase_transition} which is a consequence of Lemma \ref{lemma nao perc 1} below.
Roughly speaking, Lemma \ref{lemma nao perc 1} asserts that the probability that a given site belongs to an infinite open cluster vanishes as soon as a single parameter $p_I$ is taken subcritical and at least other (well-chosen) $n-k$ parameters do not equal $1$.
In order to get the same conclusion, Lemma \ref{lemma nao perc 2} requires that $n/k$ parameters are subcritical regardless of the fact that the other parameters can even be equal to $1$.
Remark \ref{remark:exp_dec} contains the sketch of an argument showing that if we get sufficiently many subcritical parameters then actually exponential decay holds (hence infinite connected components cannot exist $a.s.$). 
We begin with the following deterministic result which will also be useful in Section \ref{sec:polynomial_decay} when we present a proof of Theorem \ref{t:poly_dec_1}.

\begin{lemma}
\label{lemma nao perc} 
Let $\omega$ be as in  \eqref{eq:def_omega} and fix  $I\in\mathcal{I}(k;n)$.
Assume that the two following conditions hold:

\noindent i) The cluster $\mathcal{V}_{\omega_I}(o;\ZZ_I^k)$ is finite;\\
\noindent ii) There exists $T\subset \ZZ_{[n]\setminus I}^{n-k}$ that surrounds the origin in $\ZZ_{[n]\setminus I}^{n-k}$ and such that every $v\in T$ is $\mathcal{P}_I(x)$-closed for every $x\in\mathcal{V}_{\omega_I}(o;\ZZ_I^k)$.

\noindent Then $\mathcal{V}_\omega(o; \mathbb{Z}^n)$ is finite.
\end{lemma}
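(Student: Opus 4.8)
The plan is to extract from the hypotheses a set $T'\subset\ZZ^n$, all of whose sites are $\omega$-closed, that surrounds the origin; by the useful fact recalled just before the statement (in its version for $\mathcal{V}_\omega$) this immediately forces $\mathcal{V}_\omega(o;\ZZ^n)$ to be finite. (We may assume $\omega(o)=1$, for otherwise $\mathcal{V}_\omega(o;\ZZ^n)$ is trivially finite; in particular $o$ is then $\omega_I$-open, so $o\in A_I:=\mathcal{V}_{\omega_I}(o;\ZZ^k_I)$.) Throughout I would use the canonical identification of $\ZZ^n$ with $\ZZ^k_I\times\ZZ^{n-k}_{[n]\setminus I}$ via $v\mapsto(\pi_I(v),\pi_{[n]\setminus I}(v))$, under which $o$ corresponds to $(o,o)$ and the $l_1$-distance splits as $\|v-w\|=\|\pi_I(v)-\pi_I(w)\|+\|\pi_{[n]\setminus I}(v)-\pi_{[n]\setminus I}(w)\|$. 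Write $\mathrm{dist}(U,W)=\inf\{\|u-w\|:u\in U,w\in W\}$.

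First I would fix the two ``barriers''. On the $\ZZ^k_I$ side: take $A_I=\mathcal{V}_{\omega_I}(o;\ZZ^k_I)$, which is finite by hypothesis (i), and let $S\subset\ZZ^k_I$ be its exterior vertex boundary, i.e.\ the sites of $\ZZ^k_I\setminus A_I$ at $l_1$-distance $1$ from $A_I$. By maximality of $A_I$ every site of $S$ is $\omega_I$-closed, and setting $B':=\ZZ^k_I\setminus(A_I\cup S)$ one gets the partition $\ZZ^k_I\setminus S=A_I\sqcup B'$ with $A_I$ connected, finite, containing $o$, and $\mathrm{dist}(A_I,B')\ge2$. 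On the $\ZZ^{n-k}_{[n]\setminus I}$ side: hypothesis (ii) provides a set $T$ surrounding $o$ there, hence a partition $\ZZ^{n-k}_{[n]\setminus I}\setminus T=C\sqcup D$ with $C$ connected, finite, containing $o$, and $\mathrm{dist}(C,D)\ge2$.

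Then I would set
\[
T':=\big(S\times\ZZ^{n-k}_{[n]\setminus I}\big)\cup\big(A_I\times T\big),\qquad \mathcal{A}:=A_I\times C,\qquad \mathcal{B}:=\ZZ^n\setminus(T'\cup\mathcal{A}),
\]
and verify three points. (a) $\mathcal{A}$ is connected and finite (product of connected finite sets) and contains $o=(o,o)$; moreover $\mathcal{A}\cap T'=\emptyset$ since $A_I\cap S=\emptyset$ and $C\cap T=\emptyset$, so $\ZZ^n\setminus T'=\mathcal{A}\sqcup\mathcal{B}$. (b) Every site of $T'$ is $\omega$-closed: if $\pi_I(v)\in S$ then $\omega_I(\pi_I(v))=0$, so $\omega(v)=0$; and if $v\in A_I\times T$, then, regarding $v$ as the unique point of $\mathcal{P}_I(\pi_I(v))$ with $\pi_{[n]\setminus I}(v)\in T$, hypothesis (ii) applied to $x=\pi_I(v)\in A_I=\mathcal{V}_{\omega_I}(o)$ tells us $v$ is $\omega_J$-closed for some $J\ne I$, so again $\omega(v)=0$. (c) $\mathrm{dist}(\mathcal{A},\mathcal{B})\ge2$: here I would take $(x,v)\in\mathcal{A}$ and $(x',v')\in\ZZ^n\setminus T'$ with $\|(x,v)-(x',v')\|\le1$ and show $(x',v')\in\mathcal{A}$. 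By the splitting of the distance, either $x=x'$ and $\|v-v'\|\le1$, or $v=v'$ and $\|x-x'\|\le1$. In the first case $v'\notin D$ (since $v\in C$ and $\mathrm{dist}(C,D)\ge2$) and $v'\notin T$ (else $(x,v')\in A_I\times T\subset T'$), so $v'\in C$ and $(x',v')=(x,v')\in\mathcal{A}$; in the second case $x'\notin B'$ (since $\mathrm{dist}(A_I,B')\ge2$) and $x'\notin S$ (else $(x',v)\in S\times\ZZ^{n-k}_{[n]\setminus I}\subset T'$), so $x'\in A_I$ and $(x',v')=(x',v)\in\mathcal{A}$. Hence $\mathcal{B}$ stays at $l_1$-distance $\ge2$ from $\mathcal{A}$, so $T'$ surrounds $o$ and, being $\omega$-closed throughout, it witnesses that $\mathcal{V}_\omega(o;\ZZ^n)$ is finite.

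The main obstacle is step (c): one must choose the composite barrier $T'$ so that its two pieces — the ``vertical'' wall $S\times\ZZ^{n-k}_{[n]\setminus I}$ sitting above the exterior boundary of the projected cluster, and the ``horizontal'' wall $A_I\times T$ sitting above the projected cluster itself — actually glue into one surface that encloses $\mathcal{A}$ with a gap of at least $2$. What makes this go through is that a single step leaving $\mathcal{A}$ either keeps the $\ZZ^k_I$-coordinate inside $A_I$ — in which case, if the $\ZZ^{n-k}_{[n]\setminus I}$-coordinate moves into $T$, it is caught by the horizontal wall, and this is exactly where hypothesis (ii) supplies closedness, since (ii) is stated over $x\in\mathcal{V}_{\omega_I}(o)$ — or it pushes the $\ZZ^k_I$-coordinate onto $S$, and is caught by the vertical wall. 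It is precisely for this dichotomy that one must use the \emph{exterior boundary} of $A_I$, rather than an arbitrary set surrounding $o$ in $\ZZ^k_I$.
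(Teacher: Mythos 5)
Your proof is correct, but it follows a genuinely different route from the paper's. The paper argues by contradiction: it assumes an infinite $\omega$-open path started at the origin and shows by induction along the path that every site projects under $\pi_I$ into $\mathcal{V}_{\omega_I}(o;\ZZ^k_I)$ and under $\pi_{[n]\setminus I}$ into the finite interior $A$ of the surrounding set $T$, contradicting the fact that the path visits infinitely many distinct sites; it never builds a closed surrounding set in $\ZZ^n$. You instead construct the explicit closed barrier $T'=(S\times\ZZ^{n-k}_{[n]\setminus I})\cup(A_I\times T)$, verify the paper's definition of ``surrounds the origin'' for it (the partition, connectivity and finiteness of $\mathcal{A}=A_I\times C$, and the distance-$2$ gap), and then invoke the criterion stated at the end of the notation section. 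The two arguments encode the same confinement of the cluster of $o$ inside $\mathcal{V}_{\omega_I}(o)\times C$, and your two cases in step (c) mirror the paper's two inductive cases: a step in a direction of $[n]\setminus I$ is blocked by $T$ exactly where hypothesis (ii) applies, while a step in a direction of $I$ either stays in $A_I$ or lands on the exterior boundary $S$, which you must separately note is $\omega_I$-closed by maximality of the cluster --- a point the paper sidesteps by using openness of the path directly. What your version buys is a fully explicit surrounding set and a self-contained check of the surrounding property (in the same spirit in which the lemma is later applied); what it costs is reliance on the easy but unproved direction of the stated equivalence (a closed surrounding set forces a finite cluster) and the extra bookkeeping with $S$ and the product decomposition. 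Both are complete proofs of the statement.
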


\begin{proof}
\noindent Assume that there exists an infinite path $\{o=z_1,z_2,\ldots\}\subset \ZZ^n$  starting at the origin and composed of $\omega-$open sites only.
Let $T$ be as in Condition ii) and $A\subset\ZZ_{[n]\setminus I}^{n-k}$ be the corresponding set given as \eqref{e:surround_1} and \eqref{e:surround_2}. 
We claim that $z_i$ satisfies
\begin{equation}
\label{e:proj_path}
\text{$\pi_I(z_{i})\in \mathcal{V}_{\omega_I}(o,\ZZ_I^k)\,\,\,\,$ and $\,\,\,\,\pi_{[n]\setminus I}(z_{i})\in A\,\,\,\,$ for all $i=1,2,\ldots$}
\end{equation}
Since $\mathcal{V}_{\omega_I}(o;\ZZ_I^k)$ and $A$ are finite this would contradict the fact that all the $z_i$'s are distinct. 

Since $z_1=o$, \eqref{e:proj_path} holds for $i=1$.
Now assume that \eqref{e:proj_path} holds for some $i \geq 1$.
Since $\|z_i-z_{i+1}\|=1$, either we have $\|\pi_{[n]\setminus I}(z_{i+1})- \pi_{[n]\setminus I}(z_i)\|=1$ and $\|\pi_I(z_{i+1})-\pi_I(z_i)\|=0$ or else $\|\pi_{[n]\setminus I}(z_{i+1})-\pi_{[n]\setminus I}(z_i)\|=0$ and $\|\pi_I(z_{i+1})-\pi_I(z_i)\|=1$.
In the first case, let  $x= \pi_I(z_i) = \pi_I(z_{i+1})$.
We have $\pi_I(z_{i+1})\in \mathcal{V}_{\omega_I}(o,\ZZ_I^k)$. 
Moreover, since $\|\pi_{[n]\setminus I}(z_{i+1})-\pi_{[n]\setminus I}(z_i)\|=1$,
we have $\pi_{[n]\setminus I} (z_{i+1}) \in A \cup T$.
But $\pi_{[n]\setminus I} (z_{i+1})$ is not $\mathcal{P}_I(x)$-closed, so $\pi_{[n]\setminus I} (z_{i+1}) \notin T$, therefore we must have $\pi_{[n]\setminus I} (z_{i+1}) \in A$.
In the second case, $z_i\in \mathcal{P}_I(x)$ and $z_{i+1}\in \mathcal{P}_I(y)$, where   $\|x-y\|=1$ and $x\in \mathcal{V}_{\omega_I}(o;\ZZ_I^{k})$ thus, since $\omega_I(y)=1$ we must have that $y \in \mathcal{V}_{\omega_I}(o; \mathbb{Z}^k_I)$.
Also $\pi_{[n]\setminus I}(z_i)=\pi_{[n]\setminus I}(z_{i+1})$ and hence $\pi_{[n]\setminus I} (z_{i+1}) \in A$.
Therefore, \eqref{e:proj_path} follows by induction.
\qed
\end{proof}

We use Lemma \ref{lemma nao perc} in order to prove the following result  that settles the existence of a subcritical phase proving the second assertion in Theorem \ref{t:phase_transition}.

\begin{lemma}
\label{lemma nao perc 1} 
Assume that  $p_I< p_c(\ZZ^k)$ for some $I\in \mathcal{I}(k;n)$ and that $p_{J_i}<1$ for the $n-k$ distinct $J_i\in \mathcal{I}(k;n)$ such that $\# (I\cap J_1\cap\ldots\cap J_{n-k}) = k-1$. 
Then $\mathcal{V}_{\omega}(o; \mathbb{Z}^n)$ is finite $\mathbb{P}_{\mathbf{p}}$-$a.s.$
\end{lemma}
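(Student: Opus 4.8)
The plan is to deduce Lemma~\ref{lemma nao perc 1} from the deterministic Lemma~\ref{lemma nao perc} by showing that, with probability one, both of its hypotheses i) and ii) are satisfied. Hypothesis i) is immediate: since $p_I < p_c(\ZZ^k)$ and $\ZZ^k_I$ is isomorphic to $\ZZ^k$, the cluster $\mathcal{V}_{\omega_I}(o;\ZZ^k_I)$ is $\PP_{\mathbf{p}}$-a.s.\ finite by definition of $p_c(\ZZ^k)$. The work is in hypothesis ii), and this is where I expect the main subtlety to lie. Conditionally on the (a.s.\ finite) set $\mathcal{V}_{\omega_I}(o;\ZZ^k_I)$, I want to produce a set $T \subset \ZZ^{n-k}_{[n]\setminus I}$ surrounding the origin all of whose vertices are $\mathcal{P}_I(x)$-closed simultaneously for \emph{every} $x$ in that (random but finite) cluster.

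The key observation is the following. Recall that $v \in \ZZ^{n-k}_{[n]\setminus I}$ is $\mathcal{P}_I(x)$-closed if the unique $u \in \mathcal{P}_I(x)$ with $\pi_{[n]\setminus I}(u)=v$ is $\omega_J$-closed for some $J \in \mathcal{I}(k;n)\setminus\{I\}$. Now take the $n-k$ indices $J_1,\dots,J_{n-k}$ from the statement, characterized by $\#(I \cap J_1 \cap \cdots \cap J_{n-k}) = k-1$; concretely, writing $[n]\setminus I = \{a_1,\dots,a_{n-k}\}$, these are $J_i = (I \setminus \{b\}) \cup \{a_i\}$ for the appropriate fixed $b \in I$. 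For such a $J_i$, the projection $\pi_{J_i}$ restricted to $\mathcal{P}_I(x)$ does \emph{not} depend on the $a_i$-coordinate alone in a way that entangles with $x$ — more precisely, whether $u \in \mathcal{P}_I(x)$ is $\omega_{J_i}$-closed is determined by $\omega_{J_i}$ evaluated at $\pi_{J_i}(u)$, and as $x$ ranges over the finite set $\mathcal{V}_{\omega_I}(o;\ZZ^k_I)$ while $v = \pi_{[n]\setminus I}(u)$ is held fixed, $\pi_{J_i}(u)$ ranges over a finite set of points in $\ZZ^k_{J_i}$. So for a fixed $v$, the event ``$v$ is $\mathcal{P}_I(x)$-closed for all $x$ in the cluster'' contains the event that, for the chosen direction, some $\omega_{J_i}$-site in a finite prescribed list is closed. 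Since each $p_{J_i} < 1$, one can arrange a positive-probability ``blocking'' configuration; because $\ZZ^{n-k}_{[n]\setminus I}$ is a genuine lattice (of dimension $n-k \ge 1$), a Borel–Cantelli / renormalization argument along concentric shells produces, a.s., a surrounding set $T$ with the required property.

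The cleanest route is probably: (1) condition on $\mathcal{C} := \mathcal{V}_{\omega_I}(o;\ZZ^k_I)$, which is a.s.\ finite and which is independent of the processes $\omega_{J}$, $J \ne I$; (2) observe that, conditionally on $\mathcal{C}$ being contained in some box, the field on $\ZZ^{n-k}_{[n]\setminus I}$ given by $v \mapsto \ind\{v \text{ is } \mathcal{P}_I(x)\text{-closed } \forall x \in \mathcal{C}\}$ dominates a nondegenerate site-percolation-like field with all marginals bounded below away from $0$ — here I only need one of the directions $J_i$ and a union over the finitely many shifts coming from $\mathcal{C}$, using $1 - \prod (1-p_{J_i}\text{-type terms})$ bounds, ensuring each $v$ is ``blocked'' with probability at least some $\rho = \rho(\mathbf{p},\mathcal{C}) > 0$; (3) since this field, although not independent, has only finite-range dependence (two vertices $v,v'$ far apart in $\ZZ^{n-k}_{[n]\setminus I}$ involve disjoint $\omega_J$-sites once their distance exceeds the diameter of $\mathcal{C}$), invoke a standard result on percolation with finite-range dependence and density close to $1$ — or, since we only need \emph{existence} of a surrounding set and not a supercritical cluster, a direct first/second-moment argument over annuli $\{R \le \|v\|_\infty \le 2R\}$ — to conclude that a.s.\ infinitely many such annuli are entirely ``blocked'', giving the desired $T$. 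Then Lemma~\ref{lemma nao perc} applies and $\mathcal{V}_{\omega}(o;\ZZ^n)$ is a.s.\ finite.

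The main obstacle is step (2)–(3): verifying that the ``blocked'' field on $\ZZ^{n-k}_{[n]\setminus I}$ really does have uniformly positive density and finite-range dependence after conditioning on the random cluster $\mathcal{C}$, and then extracting a surrounding set from it. One must be careful that the diameter of $\mathcal{C}$ is a.s.\ finite but unbounded, so the bound $\rho$ and the dependence range are random; this is handled by conditioning on $\{\mathcal{C} \subset B(m)\}$ for each $m$ and summing over $m$, or by a stopping-time argument that explores $\mathcal{C}$ first and only then reveals the $\omega_{J}$'s outside. In the special degenerate case $n-k = 1$ one cannot ``surround'' in the usual sense, but there the statement still reduces to blocking in both directions of $\ZZ^1$, which is a trivial Borel–Cantelli; since the hypotheses force $2 \le k \le n-1$ we in any case have $n-k \ge 1$, and the argument above covers $n-k\ge 2$ verbatim and $n-k=1$ with the obvious simplification.
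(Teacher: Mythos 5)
Your overall skeleton---reduce to Lemma \ref{lemma nao perc}, get Condition i) from $p_I<p_c(\ZZ^k)$, and build the surrounding set for Condition ii) out of the $n-k$ indices $J_i=(I\setminus\{b\})\cup\{a_i\}$ with $p_{J_i}<1$, conditioning on $\{\mathcal{V}_{\omega_I}(o;\ZZ^k_I)\subset B(N)\}$ and summing over $N$---is the same as the paper's. The gap is in how you actually produce the blocking set, i.e.\ in your steps (2)--(3). A small slip first: to make a fixed $v$ $\mathcal{P}_I(x)$-closed for \emph{all} $x$ in the cluster via the direction $J_i$ you need \emph{all} sites of the finite list $\{\pi_{J_i}(x+v):x\in\mathcal{C}\}$ to be $\omega_{J_i}$-closed, not ``some'' of them; this still has positive probability, so your marginal bound $\rho>0$ survives. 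The real trouble is step (3). The blocked field on $\ZZ^{n-k}_{[n]\setminus I}$ does \emph{not} have finite-range dependence: $\pi_{J_i}(x+v)$ depends on $v$ only through the single coordinate $v_{a_i}$, so two vertices agreeing in that coordinate involve exactly the same $\omega_{J_i}$-sites no matter how far apart they are---the dependence is infinite-range along columns. Its density is also not close to $1$: the hypotheses allow every $p_{J_i}$ to be arbitrarily close to $1$ (with all remaining parameters equal to $1$), so $\rho$ may be arbitrarily small and no Liggett--Schonmann--Stacey-type high-density domination is available. Worse, with only ``marginals bounded below by a small $\rho$'' the desired conclusion is false: for an i.i.d.\ field of small density in dimension $n-k\ge 2$ there is, with positive probability, no blocked set surrounding the origin (the non-blocked sites are highly supercritical and the origin lies in their infinite cluster with positive probability). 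Accordingly, your bounds give no lower bound on the probability that a whole shell is blocked that does not decay with its radius (positive association only yields $\rho^{cR^{n-k-1}}$), and the shells are not independent, so neither Borel--Cantelli nor a first/second-moment computation over annuli yields a.s.\ infinitely many blocked shells.

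What is missing is that the columnar dependence you try to tame is precisely the mechanism that produces the surrounding set. On $\{\mathcal{V}_{\omega_I}(o;\ZZ^k_I)\subset B(N)\}$, for each $i$ and each integer $m\ge 1$ let $A^{(i)}_m$ be the event that all $2(2N+1)^{k-1}$ sites of $\ZZ^k_{J_i}$ whose coordinates in $I\setminus\{b\}$ lie in $[-N,N]$ and whose $a_i$-coordinate equals $\pm m$ are $\omega_{J_i}$-closed. Its probability is $(1-p_{J_i})^{2(2N+1)^{k-1}}$, a constant independent of $m$, and on $A^{(i)}_m$ \emph{every} $v\in\ZZ^{n-k}_{[n]\setminus I}$ with $v_{a_i}=\pm m$ is $\mathcal{P}_I(x)$-closed for every $x\in B(N)$: blocking two entire hyperplane faces costs a constant because they project onto a single bounded slab of $\omega_{J_i}$-sites. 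Since distinct levels $m$ use disjoint sites, the events $A^{(i)}_m$ are independent in $m$, so Borel--Cantelli gives a.s.\ a level $x^*_{a_i}$ for each $i$, and the boundary of the box $\prod_i[-x^*_{a_i},x^*_{a_i}]$ in $\ZZ^{n-k}_{[n]\setminus I}$ is the required set $T$; Lemma \ref{lemma nao perc} then concludes. This is exactly the paper's proof: your ``key observation'' paragraph contains its germ, but your elaboration via finite-range dependence, high-density domination, or moment bounds over annuli would not close the argument.
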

\begin{proof} The proof is divided into $2$ cases:
\medskip

\noindent \textbf{First case: $k=n-2$, $k\geq 2$:}

Let us assume for simplicity that $I=\{1,\ldots,k\}$, $J_1=\{1,\ldots,k-1,k+1\}$ and $J_2=\{1,\ldots,k-1,k+2\}$
(thus $p_I<p_c(\ZZ^k)$ and $p_{J_1},p_{J_2}<1$).
Let $\mathcal{H}=\mathcal{H}(I)$ be the $\ZZ^k$-decomposition of $\ZZ^n$ associated to $I$. 
Then $\mathcal{H}$ is isomorphic to $\ZZ^k$ and each site of $\mathcal{H}$ is isomorphic to $\ZZ^2$. 
Since $p_I<p_c(\ZZ^k)$,  there exists $a.s.$ a (random) non-negative integer $N$  such that $\mathcal{V}_{\omega_I}(o;\ZZ_I^k)\subset B(N)$. 
In particular, Condition i) in Lemma \ref{lemma nao perc} holds $a.s.$ and all  we need to show is that Condition ii) holds a.s.\ on the event $[\mathcal{V}_{\omega_I}(o;\ZZ_I^k)\subset B(N)]$ for each fixed $N$.

To this end, first recall that for each $x\in\ZZ_I^k$, $\mathcal{P}_I(x)=\{z\in \ZZ^n: \pi_I(z)=x\}$. 
Since $p_{J_1}<1$, the Borel-Cantelli Lemma guarantees that, almost surely, there exists $x_{k+1}^*\in\NN$ such that $\omega_{J_1}(x_1,\ldots,x_{k-1},x_{k+1}^*)=\omega_{J_1}(x_1,\ldots,x_{k-1},-x_{k+1}^*)=0$, for all $(x_1,\ldots,x_{k-1})\in[-N,N]^{k-1}\cap\ZZ^{k-1}$. 
This implies that for each $x=(x_1,\ldots,x_k)\in [-N,N]^k \cap \mathbb{Z}^k$
$$\omega(x_1,\ldots,x_k,x_{k+1}^*,x_{k+2})=\omega(x_1,\ldots,x_k,-x_{k+1}^*,x_{k+2})=0, \, \forall \, x_{k+2}\in\ZZ.$$
In other words, for each $x\in B(N)\subset \mathbb{Z}^k_I $ the set of $\mathcal{P}_I(x)$-closed sites of $\pi_{[n]\setminus I}(\mathcal{P}_I(x))\subset \ZZ^2_{[n]\setminus I}$ contains the lines $\{(0,\ldots, 0, x_{k+1}^*,s):s\in\ZZ\}$ and $\{(0,\ldots,0,-x_{k+1}^*,s):s\in\ZZ\}$.
Similarly, we find $x_{k+2}^*$ such that for each $x=(x_1,\ldots,x_k)\in [-N,N]^k \cap \mathbb{Z}^k$
$$\omega(x_1,\ldots,x_k,x_{k+1},x_{k+2}^*)=\omega(x_1,\ldots,x_k,x_{k+1},-x_{k+2}^*)=0, \, \forall \, x_{k+1}\in\ZZ.$$
Hence for each $x\in B(N) \subset \mathbb{Z}^k_I$, the set of $\mathcal{P}_I(x)$-closed sites of $\pi_{[n]\setminus I}(\mathcal{P}_I(x))\subset \ZZ^2$ contains the lines $\{(0,\ldots,0,t,x_{k+2}^*):t\in\ZZ\}$ and $\{(0,\ldots,0,-t_{k+2}^*):t\in\ZZ\}$. 
Therefore, for each fixed $N$, Condition ii) in Lemma \ref{lemma nao perc} holds $a.s.$ in the event $\mathcal{V}_{\omega_I}(o;\ZZ_I^k)\subset[-N,N]^k$ with $T\subset \ZZ^2_{[n]\setminus I}$ the rectangle delimited by the lines $\{(0, \ldots, 0, x_{k+1}^*,s):s\in\ZZ\}$, $\{(0, \ldots, 0, -x_{k+1}^*,s):s\in\ZZ\}$, $\{(0,\ldots,0,t,x_{k+2}^*):t\in\ZZ\}$ and $\{(0,\ldots,0,t,-x_{k+2}^*):x\in\ZZ\}$. 
This completes the proof in the case $n=k+2$.
\medskip

\noindent \textbf{The general case $n=k+l$, $k,l\geq2$:}

Let us assume for simplicity that $I=\{1,\ldots,k\}$ and $J_i=\{1,\ldots,k-1,k+i\}$ for $1\leq i \leq l$. 
Let $\mathcal{H}=\mathcal{H}(I)$ be the $\ZZ^k$-decomposition of $\ZZ^n$ associated to $I$. 
Then $\mathcal{H}$ is isomorphic to $\ZZ^k$ and each site in $\mathcal{H}$ is isomorphic to $\ZZ^l$. Similarly as above, condition $p_I<p_c(\ZZ^k)$ implies that Condition i) in Lemma \ref{lemma nao perc} holds $a.s.$ 
Hence we only need to show that $p_{J_i}<1$ for $1\leq i \leq l$ implies that Condition ii) in the same Lemma holds $a.s.$ 
Similarly as above, for each $1\leq i\leq l$,
the family of random variables $\{\omega_{J_i}(x_1,\ldots,x_{k-1},x_{k+i}): -N\leq x_1,\ldots,x_{k-1}\leq N, x_{k+i}\in\ZZ\}$ are independent, and hence by the Borel-Cantelli Lemma there exists $a.s.$ a set of nonnegative integers $\{x_{k+i}^*\}_{i=1}^l$ such that $\omega_{J_i}(x_1,\ldots,x_{k-1},x_{k+i}^*)=\omega_{J_i}(x_1,\ldots,x_{k-1},-x_{k+i}^*)=0$, for all $-N\leq x_1,\ldots,x_{k-1}\leq N$ and all $i=1,\ldots,l$. 
This implies that, for every $i=1,\ldots, l$, the set of $\mathcal{P}_I(x)$-closed sites of $\pi_{[n]\setminus I}(\mathcal{P}_I(x))$ contains the hyperplane 
\[
T_i :=\big\{z=(0, \ldots, 0, z_{k+1},\ldots,z_{k+l}) \in \ZZ^l_{[n]\setminus I} \colon \, 
z_{k+i} = \pm x_{k+1}^*\big\}.
\]
This shows that Condition ii) holds $a.s.$ with
\[
T:=\{0\} \times \cdots \times \{0\} \times \partial \big([-x_{k+1}^*,x_{k+1}^*]\times\ldots\times [-x_{k+l}^*,x_{k+l}^*]\big) \cap \ZZ^l_{[n]\setminus I}.
\]
\qed
\end{proof}

\begin{remark}[Proof of the second assertion in Theorem \ref{t:phase_transition}]
\label{remark:proof_phase_tran_2}
It follows directly from the statement of Lemma \ref{lemma nao perc 1}  that when the parameters $p_I$ are sufficiently small (e.g.\ if they all belong to  the interval $(0,p_c(Z^k))$) then percolation does not occur.
Notice, however, that Lemma \ref{lemma nao perc 1} provides much more detail on the location of the subcritical phase in the space of parameters.
\end{remark}

The next result also implies the existence of the subcritical phase.
Strictly speaking, it only holds in the particular setting when $k$ divides $n$ and, although it will not be used it in the remainder of the paper, we decided to include it here because it adds some further information to the phase diagram in this specific setting.
Its proof also uses Lemma \ref{lemma nao perc}.

\begin{lemma}
\label{lemma nao perc 2}
Assume that $k$ divides $n$, and let $\omega$ be a Bernoulli $(n,k)$-hyperplane percolation process. 
Let $I_1,\ldots,I_{n/k}\in\mathcal{I}(k;n)$ be a partition of $[n]$. 
If for each $1\leq j\leq n/k$ we have $p_{I_j}<p_c(\ZZ^k)$, then $\mathcal{V}_{\omega}(o; \mathbb{Z}^n)$ is finite $\mathbb{P}_{\mathbf{p}}$-$a.s.$
\end{lemma}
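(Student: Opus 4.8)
The plan is to proceed by induction on the number of blocks $m := n/k$ of the partition, using Lemma \ref{lemma nao perc} at the inductive step. For the base case $m = 1$ one has $n = k$ and $I_1 = [n]$, so that $\omega = \omega_{[n]}$ is ordinary Bernoulli site percolation on $\ZZ^n = \ZZ^k$ with parameter $p_{[n]} < p_c(\ZZ^k)$; hence $\mathcal{V}_\omega(o;\ZZ^n)$ is finite $\PP_{\mathbf{p}}$-a.s.\ by the definition of $p_c(\ZZ^k)$.

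For the inductive step, suppose $m \geq 2$ and that the statement is known whenever the ambient dimension is $(m-1)k$. I would set $I := I_1$, so that $[n]\setminus I = I_2 \cup \cdots \cup I_m$ is a partition of the $(m-1)k$-element set $[n]\setminus I$ into $m-1$ blocks, and introduce the reduced process $\tilde\omega$ on $\ZZ^{n-k}_{[n]\setminus I}$ defined by $\tilde\omega(v) := \prod_{j=2}^m \omega_{I_j}(\pi_{I_j}(v))$. Identifying $\ZZ^{n-k}_{[n]\setminus I}$ with $\ZZ^{n-k}$, this $\tilde\omega$ is itself a Bernoulli $(n-k,k)$-hyperplane percolation, namely the one with parameters $p_{I_2},\dots,p_{I_m}$ and all remaining parameters equal to $1$ (inserting factors equal to $1$ does not alter the product). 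Since each $p_{I_j}<p_c(\ZZ^k)$ and $\{I_2,\dots,I_m\}$ is a partition of $[n]\setminus I$ into $(n-k)/k = m-1$ blocks, the induction hypothesis would give that $\mathcal{V}_{\tilde\omega}(o;\ZZ^{n-k}_{[n]\setminus I})$ is finite a.s.

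It then remains to verify the two hypotheses of Lemma \ref{lemma nao perc} for $\omega$ with this $I$. Condition i) holds a.s.\ because $p_I < p_c(\ZZ^k)$. For Condition ii), on the probability-one event that $\mathcal{V}_{\tilde\omega}(o;\ZZ^{n-k}_{[n]\setminus I})$ is finite, the characterization of cluster finiteness by surrounding sets of closed sites (recalled at the end of Section \ref{sec:notation}) furnishes a set $T\subset\ZZ^{n-k}_{[n]\setminus I}$ surrounding the origin in $\ZZ^{n-k}_{[n]\setminus I}$ whose sites are all $\tilde\omega$-closed. I would then check that each such $v\in T$ is $\mathcal{P}_I(x)$-closed for \emph{every} $x\in\ZZ^k_I$: from $\tilde\omega(v)=0$ one gets some $j\in\{2,\dots,m\}$ with $\omega_{I_j}(\pi_{I_j}(v))=0$, and since $I_j\subseteq[n]\setminus I$, the unique $u\in\mathcal{P}_I(x)$ with $\pi_{[n]\setminus I}(u)=v$ has $\pi_{I_j}(u)=\pi_{I_j}(v)$, so $u$ is $\omega_{I_j}$-closed with $I_j\neq I$ — and the index $j$ is independent of $x$. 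Hence Condition ii) holds a.s., and Lemma \ref{lemma nao perc} yields that $\mathcal{V}_\omega(o;\ZZ^n)$ is finite a.s., closing the induction.

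I expect the genuinely delicate points to be bookkeeping rather than analysis: checking that $\tilde\omega$ really is a legitimate Bernoulli $(n-k,k)$-hyperplane percolation eligible for the induction hypothesis (this works precisely because Lemma \ref{lemma nao perc 2} places no restriction on parameters outside the chosen partition), and the uniform-in-$x$ step from $\tilde\omega$-closedness to $\mathcal{P}_I(x)$-closedness, which reduces to the identity $\pi_{I_j}\circ\pi_{[n]\setminus I} = \pi_{I_j}$ on $\ZZ^n$ when $I_j\subseteq[n]\setminus I$. I would also remark that the induction can be bypassed altogether: the bijection $v\mapsto(\pi_{I_1}(v),\dots,\pi_{I_m}(v))$ realizes $\ZZ^n$ as $\prod_{j=1}^m\ZZ^k_{I_j}$ in such a way that $\omega$-open paths project to $\omega_{I_j}$-open walks in each factor, whence $\mathcal{V}_\omega(o;\ZZ^n)\subseteq\prod_{j=1}^m\mathcal{V}_{\omega_{I_j}}(o;\ZZ^k_{I_j})$, an a.s.\ finite set; but routing the argument through Lemma \ref{lemma nao perc} keeps it parallel to the proof of Lemma \ref{lemma nao perc 1}.
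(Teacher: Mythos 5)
Your proposal is correct and follows essentially the same route as the paper: induction on the number of blocks, applying Lemma \ref{lemma nao perc} with $I=I_1$ at each step, with only cosmetic differences (you start the induction at $m=1$ rather than treating $l=2$ as the base case, and your reduced process $\tilde\omega$ keeps only the partition-block factors, i.e.\ sets all remaining parameters to $1$, whereas the paper's $\tilde\omega$ keeps every factor $\omega_J$ with $J\cap I_1=\varnothing$ --- either choice is a legitimate $(n-k,k)$-hyperplane percolation to which the induction hypothesis applies, and your verification of Condition ii) uniformly in $x$ is exactly the point the paper also uses). Your closing remark that $\mathcal{V}_\omega(o;\ZZ^n)$ injects into $\prod_{j}\mathcal{V}_{\omega_{I_j}}(o;\ZZ^k_{I_j})$ is also correct and in fact yields an even shorter, induction-free proof than the one given in the paper.
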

\begin{proof} 
Write $n=lk$. 
Without loss of generality, assume that for each $1\leq j \leq l$, $I_j=\{(j-1)k+1,\ldots,jk\}$. 
We will argue by induction on $l \geq 2$. 

We begin fixing $l=2$. 
Let $\mathcal{H}=\mathcal{H}(I_1)$ be the $\ZZ^k$-decomposition of $\ZZ^{2k}$ corresponding to $I_1$. 
Then each site $\mathcal{P}_{I_1}(x)$ of $\mathcal{H}$ is isomorphic to $\ZZ^k$. 
We want to verify that Conditions i) and ii) in Lemma \ref{lemma nao perc} hold $a.s.$ with $I = I_1$.
Since $p_{I_1}<p_c(\ZZ^k)$, the cluster $\mathcal{V}_{\omega_{I_1}}(o, \mathbb{Z}^{k_{I_1}})$ is finite $a.s.$, hence Condition i) holds $a.s.$
Let us condition on $\mathcal{V}_{\omega_{I_1}}(o, \mathbb{Z}^{k_{I_1}})$ and show that Condition ii) also holds $a.s.$

Since $l=2$ we have $\mathbb{Z}^{n-k}_{[n]\setminus I} = \mathbb{Z}^k_{I_2}$. 
Now, since $p_{I_2}<p_c(\ZZ^k)$, the cluster $\mathcal{V}_{\omega_{I_2}}(o, \mathbb{Z}^{k}_{I_2})$ is finite a.s.\
Hence, almost surely, there exists $T \subset \mathbb{Z}^k_{I_2}$ that surrounds the origin and whose sites are $\omega_{I_2}$-closed.
In particular, they are $\mathcal{P}_{I_1}(x)$-closed for every $x \in \mathcal{V}_{\omega_1}(o, \mathbb{Z}^{k}_{I_1})$.
This shows that Condition ii) in Lemma \ref{lemma nao perc} holds $a.s.$ with $I = I_1$.  
This concludes the proof for the case $l=2$. 

Now, assume  that the result holds for some $l\in\NN$ and let $n=(l+1)k$.
Since $p_{I_1}<p_c(\ZZ^k)$, Condition i) in the same lemma holds $a.s.$ with $I = I_1$.
Let us condition on the cluster $\mathcal{V}_{\tilde{\omega}_1}(o;\mathbb{Z}^{l}_{I_1})$ and show that Condition ii) holds.

Since $\mathbb{Z}^{lk}_{[n]\setminus I_1}$ is isomorphic to $\mathbb{Z}^{lk}$ we can define naturally Bernoulli $(lk,k)$-hyperplane percolation processes on it.
In fact, one can show that $(\tilde{\omega}(v))_{v \in \mathbb{Z}^{lk}_{[n]\setminus I_1}}$ defined as
\[
\tilde{\omega}(v)=\prod_{\substack{J\in\mathcal{I}(k;n) \\ J\cap I_1=\varnothing }}\omega_{I}(\pi_J(v))
\]
is indeed such a percolation process.
Since for each $2\leq j \leq l+1$ we have $p_{I_j}<p_c(\ZZ^k)$, one can use the  induction hypothesis to obtain that $\mathcal{V}_{\tilde{\omega}}(o;\mathbb{Z}^{lk}_{[n]\setminus I_1})$ is finite almost surely.
Thus, almost surely, there exists a set $T \subset \mathbb{Z}^{lk}_{[n]\setminus I_1}$ that surrounds the origin in $\mathbb{Z}^{lk}_{[n]\setminus I_1}$ and whose sites are all $\tilde{\omega}$-closed.
Therefore, for each $v \in T$, there exists $J \in \mathcal{I}(k;n)\setminus I_1$ such that $\omega_J(v) =0$ which means that $v$ is $\mathcal{P}_{\omega_{I_1}}(x)$-closed for every $x \in \mathbb{Z}^k_{I_1}$, in particular, for every $x \in \mathcal{V}_{\omega_{I_1}}(o;\mathbb{Z}^k_{I_1})$.
This establishes Condition ii).
The result follows by induction.
\qed
\end{proof}

We close this section presenting a sketch to a proof for the existence of regimes in which the connectivity decay is exponential.
\begin{remark}[Exponential decay] 
\label{remark:exp_dec}
Assume that at least $\binom{n-1}{k}+1$ of the parameters $I \in \mathcal{I}(k;n)$ satisfy $p_I < p_c(\mathbb{Z}^k)$.
If the event $\big[o \leftrightarrow \partial{B(K)}\big]$ holds for some integer $K>1$ then there must be at least one site $x=(x_1,\ldots, x_n) \in \partial{B(K)}$ for which $[o \leftrightarrow x]$.
Such a $x$ has at least one coordinate, say $x_{i_o}$, with $x_{i_o} = K$.
By continuity of the projections into the coordinate planes, the events $\big[o \leftrightarrow \pi_J\big(\partial{B(K)}\big)\big]$ must occur for all the indices $J \in \mathcal{I}(k;n)$ containing $i_o$.
This amounts for exactly $\binom{n-1}{k-1}$ indices, hence by our assumption, there must be at least one of these indices $J$ for which $p_J < p_c(\mathbb{Z}^k)$.
This implies that $\mathbb{P}_{p_J} \big(o \leftrightarrow \pi_J\big(\partial{B(K)}\big)\big)$ decays exponentially fast \cite{Aizenman87, Menshikov86} (see also \cite{Duminil-Copin16} for a more elementary proof).
Therefore, $\mathbb{P}_{\mathbf{p}} \big( o \leftrightarrow \partial{B(K)} \big)$ must also decay exponentially fast.
\end{remark}

\section{The existence of a supercritical phase}
\label{sec:supercritical_phase}

Our aim in this section is to prove that configurations in Bernoulli hyperplane percolation contain infinite connected components almost surely as soon as the parameters of $\mathbf{p}$ are large enough.
According to \eqref{e:exp_p} this is equivalent to $\mathbb{E}_{\mathbf{p}}\big(\omega(o)\big)$ being sufficiently close to $1$.
This is the content of the next result which  readily implies the first assertion in Theorem \ref{t:phase_transition}.

\begin{theorem}\label{theorem existence of percolation} Let $2\leq k \leq n-2$ and $\omega$ be as in \eqref{eq:def_omega}. 
If $\EE_\mathbf{p} (\omega(o))$ is sufficiently  close to $1$, then $\PP_{\mathbf{p}}(o\longleftrightarrow \infty)>0$.
\end{theorem}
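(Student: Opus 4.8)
The plan is to run a Peierls-type argument in the $\ZZ^k$-decomposition $\mathcal{H}=\mathcal{H}(I)$ for a fixed $I\in\mathcal{I}(k;n)$, but to control the closed hyperplanes by a renormalization: when $\EE_\mathbf{p}(\omega(o))$ is close to $1$, every parameter $p_I$ must itself be close to $1$ (indeed $\prod_I p_I = \EE_\mathbf{p}(\omega(o))$ forces $p_I \geq \EE_\mathbf{p}(\omega(o))$), so each $\omega_I$ is a supercritical Bernoulli site percolation on $\ZZ^k$ with density close to $1$. The idea is to build an infinite open path in $\ZZ^n$ by first choosing an infinite self-avoiding path in one $\ZZ^k$-slab and then ``threading'' it through the affine hyperplanes, showing that with positive probability the threading never gets blocked.

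Concretely, I would fix $I=\{1,\dots,k\}$ and look at the line $L=\{(t,0,\dots,0):t\in\ZZ\}\subset\ZZ^n$. A site $v=(t,0,\dots,0)$ is $\omega$-open iff $\omega_J(\pi_J(v))=1$ for all $J\in\mathcal{I}(k;n)$; since all these projections land on bounded regions, the events are ``almost independent'' along $t$, but not quite because different $v$ on $L$ share projections for those $J$ not containing coordinate $1$ only through the common point $o$'s neighbourhood — actually $\pi_J(v)$ for $J\not\ni 1$ is always $o$, so all such factors are a single common event. Hence, conditionally on $\{\omega_J(o)=1 \text{ for all }J\not\ni 1\}$ (which has probability bounded below), the restriction of $\omega$ to $L$ is governed only by the $\binom{n-1}{k-1}$ indices $J\ni 1$, and along those directions the one-dimensional slices are genuinely product. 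This still does not give an infinite open path because a single closed site breaks $L$; the fix is to work not with $L$ but with a renormalized construction: partition $\ZZ^n$ into blocks, declare a block ``good'' if it contains a suitable open crossing, and use a Peierls/contour argument together with the fact that each $\omega_I$ with $p_I$ close to $1$ makes blocks good with overwhelming probability. Since closed hyperplanes are the only source of long-range dependence, a closed hyperplane in direction $J$ can ruin at most the blocks it intersects, and the density of such ruinous hyperplanes is $1-p_J$, small; a standard comparison with a highly supercritical site percolation (or a multi-scale/coarse-graining bound à la Liggett–Schonmann–Stacey) then yields a $\ZZ^k$-subnet of good blocks, which is supercritical in $\ZZ^k$ for $k\geq 2$, hence contains an infinite cluster, giving $o\leftrightarrow\infty$ with positive probability.

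The key geometric input for why $2\le k\le n-2$ is used: one needs $k\ge 2$ so that the renormalized lattice $\ZZ^k$ percolates, and one needs at least one ``free'' direction among $[n]\setminus I$ — in fact $n-k\ge 2$ — so that, exactly as in Lemma \ref{lemma nao perc 1}'s picture reversed, an open hyperplane in the transverse directions has enough room to route around local obstructions; with $n-k=1$ a single closed transverse hyperplane would disconnect, which is why the borderline $k=n-1$ (Bernoulli line percolation) needs its own separate treatment and is excluded here. I would make the ``thread through affine hyperplanes'' step precise by the following coupling: reveal $\omega_I$ first, find an infinite open path $\gamma$ in $\ZZ^k_I$ (exists a.s. since $p_I>p_c(\ZZ^k)$, and $o\leftrightarrow\infty$ in $\omega_I$ with positive probability), then for each $x\in\gamma$ show that the ``fiber'' $\mathcal{P}_I(x)$ contains an open sub-path connecting the relevant neighbouring fibers, with a probability that, summed over the (polynomially many relevant) sites, still leaves positive probability of global success — this is where a Borel–Cantelli-free, quantitative estimate is needed, so one instead runs the block argument above rather than a naive union bound.

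**Main obstacle.** The delicate point is the dependence structure: the events $\{\mathcal{P}_I(x)\text{ is open along a chosen sub-path}\}$ for different $x$ on the path $\gamma$ are correlated through the shared percolations $\omega_J$ with $J\neq I$, so one cannot simply multiply probabilities. The crux of the proof is therefore to organize the construction — via coarse-graining into blocks and a stochastic-domination comparison with an independent supercritical field — so that these correlations are provably weak enough (because each $1-p_J$ is tiny) to still guarantee percolation on the renormalized $\ZZ^k$-lattice. Getting the renormalization scale, the definition of a good block, and the domination bound to fit together cleanly is the technical heart of the argument.
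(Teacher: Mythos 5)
Your proposal stalls exactly at the point you yourself flag as ``the crux''. The comparison step you invoke --- declare axis-aligned blocks good, note that each $1-p_J$ is tiny, and then apply ``a standard comparison with a highly supercritical site percolation (or a multi-scale/coarse-graining bound \`a la Liggett--Schonmann--Stacey)'' --- is not available here, because the field of good blocks does \emph{not} have finite-range dependence and in fact does not stochastically dominate any nontrivial product measure. A single $\omega_J$-closed site removes an entire $(n-k)$-dimensional affine hyperplane $\mathcal{P}_J(u)$, which ruins the whole infinite column of blocks it meets; consequently, conditionally on all distant blocks in such a column being bad (an event of probability bounded away from $0$, uniformly in the column length), the most likely cause is a closed hyperplane threading the column, and then the conditional probability that a given block in that column is good is \emph{not} close to $1$. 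Theorem 0.0 of Liggett--Schonmann--Stacey requires exactly the class-$C(n,\chi,p)$ finite-range condition that this field violates, so the renormalized $\ZZ^k$ lattice of good blocks cannot be compared to supercritical Bernoulli percolation in the way you describe. The same infinite-range columnar dependence is what defeats the alternative ``threading'' scheme (revealing $\omega_I$, then routing through the fibers $\mathcal{P}_I(x)$): the fiber events share the percolations $\omega_J$, $J\neq I$, and no quantitative decoupling is supplied.

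The paper's proof resolves this by a different geometric idea that your proposal is missing: it restricts the process to a \emph{tilted} two-dimensional sublattice. Lemma \ref{lemma construcao da base da rede} produces orthogonal vectors $w_1,w_2\in\ZZ^n$ such that $A(x,y)=xw_1+yw_2$ has all projections $\pi_I\circ A$ injective (with a uniform expansion bound). Injectivity means that distinct sites of $A(\ZZ^2)$ never share a projection onto any $\ZZ^k_I$, so the restricted field $\{\omega(Av)\}_{v\in\ZZ^2}$ is genuinely i.i.d.\ Bernoulli with parameter $\EE_{\mathbf p}(\omega(o))$ --- the long-range hyperplane dependence is killed outright rather than estimated. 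Since $A(\ZZ^2)$ is not a connected subgraph of $\ZZ^n$, the paper joins consecutive image points by bounded staircase paths $\Gamma(x,y)$ and declares $\eta(x,y)=1$ when all sites of $\Gamma(x,y)$ are open; by the expansion bound this field has \emph{finite} dependence range, so Liggett--Schonmann--Stacey legitimately applies and yields domination of a supercritical Bernoulli percolation on $\ZZ^2$, whose infinite cluster maps to an infinite $\omega$-open path in $\ZZ^n$. Without an ingredient playing the role of this injective tilted plane (or some other genuine decoupling mechanism), the block and threading arguments in your proposal cannot be completed as stated.
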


\begin{remark}[Proof of the first assertion in Theorem \ref{t:phase_transition}]
\label{remark:proof_phase_tran_1}
By \eqref{e:exp_p}, $E_{\mathbf{p}}(\omega(o))$ can be arbitrarily close to $1$ provided that all the $p_I$ are sufficiently large (all of them still smaller than $1$).
Therefore, the first assertion in Theorem \ref{t:phase_transition} follows readily from Theorem \ref{theorem existence of percolation}.
\end{remark}

In percolation, such a result is usually obtained with the help of Peierls-type arguments, that is, by restricting the process to the plane $\mathbb{Z}^2$ and showing that, as long as the control parameter are made large enough, large closed sets surrounding the origin in $\mathbb{Z}^2$ are very unlikely.
In our case, restricting the model to $\mathbb{Z}^2$ is not useful since the plane will be disconnected into finite rectangles.
We therefore replace $\mathbb{Z}^2$ with an subgraph resembling a plane that is inclined with respect to the coordinate axis in order to gain some independence.

For that we will use the following auxiliary result whose proof relies on elementary arguments and is presented in the Appendix.
\begin{lemma}
\label{lemma construcao da base da rede} 
Let $2\leq k \leq n-1$.
There exist orthogonal vectors $w_1$ and $w_2$ in $\ZZ^n$ with $\|w_1\|=\|w_2\|$ such that the linear application $A:\RR^2\to\RR^n$ given by $A(x,y)=xw_1+yw_2$ satisfies the following properties:\\
i.  For every $I\in \mathcal{I}(k;n)$ the mapping $\pi_I\circ A:\ZZ^2\to\ZZ^{\#I}$ is injective;\\
ii. There exists a constant $c=c(w_1,w_2)>0$ such that for every $I \in \mathcal{I}(k;n)$, and every $u$ and $v$ in $\RR^2$, $\|\pi_I(Au-Av)\|\geq c \|u-v\|.$
\end{lemma}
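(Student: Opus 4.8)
The plan is to reduce both requirements (i) and (ii) to a single arithmetic condition on the $n\times 2$ integer matrix whose columns are $w_1$ and $w_2$, and then to exhibit such a matrix explicitly.

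\emph{Step 1 (reduction to nonvanishing $2\times 2$ minors).} Write $w_1=a_1e_1+\dots+a_ne_n$ and $w_2=b_1e_1+\dots+b_ne_n$ with integer entries, and let $A(x,y)=xw_1+yw_2$. I claim it suffices to produce such vectors that are orthogonal (that is, $\sum_j a_jb_j=0$), satisfy $\|w_1\|=\|w_2\|$, and obey
\begin{equation}
\label{e:minors}
a_ib_j-a_jb_i\neq 0\qquad\text{for all }1\le i<j\le n.
\end{equation}
Indeed, fix $I\in\mathcal{I}(k;n)$; since $k\ge 2$ we may pick $i<j$ with $i,j\in I$. Composing $\pi_I\circ A$ with the coordinate projection onto $\{i,j\}$ gives the linear map $\RR^2\to\RR^2$ whose matrix has columns $(a_i,a_j)$ and $(b_i,b_j)$ and whose determinant is \eqref{e:minors}, hence nonzero. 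Thus this composition is invertible, so $\pi_I\circ A\colon\RR^2\to\RR^{\#I}$ is injective on $\RR^2$, a fortiori on $\ZZ^2$ (into which it clearly maps, the $w_i$ being integral); this is (i). For (ii), an injective linear map from $\RR^2$ into a finite-dimensional normed space is bounded below on the unit circle, so there is $c_I>0$ with $\|\pi_I(Au)\|\ge c_I\|u\|$ for all $u$; applying this to $u-v$ and taking $c:=\min_{I\in\mathcal{I}(k;n)}c_I>0$ — a minimum over the finite index set $\mathcal{I}(k;n)$ — yields (ii).

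\emph{Step 2 (explicit vectors).} Let $d_1<\dots<d_n$ be $n$ distinct integers with $d_1+\dots+d_n=0$, for instance $d_j=2j-n-1$, and put $S:=|d_1|+\dots+|d_n|$, which is positive since $n\ge 3$. Set
\[
w_1:=S\,(e_1+\dots+e_n),\qquad w_2:=n\,(d_1e_1+\dots+d_ne_n).
\]
Then $w_1\cdot w_2=nS\sum_j d_j=0$; moreover $\|w_1\|=nS$ and $\|w_2\|=n\sum_j|d_j|=nS$, so $\|w_1\|=\|w_2\|$; and $a_ib_j-a_jb_i=Sn\,(d_j-d_i)\neq 0$ because the $d_j$ are pairwise distinct, which is \eqref{e:minors}. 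By Step 1 these $w_1,w_2$ satisfy (i) and (ii), completing the proof.

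\emph{Expected difficulty.} The argument is entirely elementary; there is no genuine obstacle once one spots the reduction in Step 1 — the two observations that make it work are that $k\ge 2$ lets us read everything off a single $2\times 2$ minor, and that the finiteness of $\mathcal{I}(k;n)$ delivers a uniform constant $c$. For completeness I note that if one prefers $\|w_1\|=\|w_2\|$ to mean equality of \emph{Euclidean} norms rather than of the $\ell^1$-norm $\|\cdot\|$ fixed in Section~\ref{sec:notation}, the same scheme applies: writing $z_j:=a_j+\sqrt{-1}\,b_j$, the conditions ``$\sum_j a_jb_j=0$ and $\sum_j a_j^2=\sum_j b_j^2$'' become exactly $\sum_j z_j^2=0$, while \eqref{e:minors} becomes ``$z_i/z_j\notin\RR$ for $i\neq j$'', and such Gaussian integers exist for every $n\ge 3$ — e.g.\ glue the triple $(2+\sqrt{-1},\,1+2\sqrt{-1},\,2-2\sqrt{-1})$ to pairs $\{z,\sqrt{-1}\,z\}$, each $z$ chosen off the finitely many forbidden lines.
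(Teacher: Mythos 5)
Your proof is correct, and its engine is the same as the paper's: both arguments reduce everything to the fact that if every $2\times 2$ minor $a_ib_j-a_jb_i$ of the matrix with columns $w_1,w_2$ is nonzero, then $\pi_J\circ A$ is invertible for each two-element $J$, hence $\pi_I\circ A$ is injective for every $I$ with $\#I\geq 2$, and the uniform constant $c$ in (ii) comes from compactness of the unit circle together with finiteness of $\mathcal{I}(k;n)$ (the paper's final line $c=\min_I\inf_{\|x\|=1}\|\pi_I\circ Ax\|$ is exactly your Step 1 for (ii)). Where you genuinely diverge is in how the pair $(w_1,w_2)$ is produced: the paper first realizes the plane as $\Ker L$ for an explicit map $L:\RR^n\to\RR^{n-2}$, takes the kernel basis $v_1,v_2$, checks the four possible $2\times2$ minor matrices case by case, and then must run Gram--Schmidt and rescale by $2\|\tilde w_2\|$, $2\|\tilde w_1\|$ to recover integrality, orthogonality and equal norms; your choice $w_1=S(e_1+\cdots+e_n)$, $w_2=n(d_1,\ldots,d_n)$ with $d_j=2j-n-1$ delivers orthogonality, equal $l_1$-norms, integrality and the minor condition $Sn(d_j-d_i)\neq 0$ in one stroke, so you avoid the Gram--Schmidt/rescaling bookkeeping entirely, which is the fiddliest part of the paper's argument. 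Your construction is thus somewhat shorter and more transparent; the paper's detour through $\Ker L$ buys nothing extra for the statement as used later, since Section 4 only needs the two listed properties together with $w_1,w_2\in\ZZ^n$ and $\|w_1\|=\|w_2\|$. One small caveat on your closing aside about Euclidean norms: the gluing ``triple plus pairs'' only covers odd $n$ (for even $n$ you would use pairs $\{z,\sqrt{-1}z\}$ alone, or two triples), but this remark is not needed anyway because the paper's $\|\cdot\|$ is the $l_1$-norm, which your main construction matches.
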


For the rest of this section we fix the dimension $n$, the vectors and $w_1$ and $w_2$, and the corresponding linear application $A$ as in Lemma \ref{lemma construcao da base da rede}.
We define $\mathcal{G}_0 = A(\ZZ^2)$.
By Condition i.\ in Lemma \eqref{lemma construcao da base da rede} for every $2\leq k\leq n-1$, the Bernoulli $(n,k)$-hyperplane percolation  process $\omega$ restricted to $\mathcal{G}_0$ has i.i.d.\ states
\textit{i.e.}, the process $\eta_0:=\{\omega(A v)\}_{v\in\ZZ^2}$ is a standard Bernoulli site percolation process in $\ZZ^2$ with parameter $p=\EE_{\mathbf{p}}(\omega(o))$.
Thus, for $p$ close to $1$, $\eta_0$ has an infinite cluster $a.s.$ whose image under $A$ is also an infinite set composed of $\omega$-open sites in $\mathbb{Z}^n$.
However, $\mathcal{G}_0$ is not necessarily a connected subgraph of $\ZZ^n$ and thus we did not prove that this set is indeed and infinite open cluster.
To fix this issue we will add sites to $\mathcal{G}_0$ in such a way to guarantee that we get a connected subgraph $\mathcal{G}\subset\ZZ^n$.
Now the family of random variables $\{\omega(x)\}_{x\in\mathcal{G}}$ may no longer be independent. 
However it will still dominate an independent family as long as the parameters $p_I$ are large enough.
This will allow us to find an infinite cluster in $\mathcal{G}$ a.s.

\subsection{Construction of the graph $\mathcal{G}$}
Let $w_1$ and $w_2$ be as above and denote $w_1=(\alpha_1,\ldots,\alpha_n)$ and $w_2=(\beta_1,\ldots,\beta_n)$. 
Let $p_0=q_0=o$ and define inductively for $1\leq j\leq n$:
\begin{align*}
p_j&=p_{j-1}+\alpha_je_j,\\
q_j&=q_{j-1}+\beta_je_j.
\end{align*}
Given $u,v\in\ZZ^n$ such that $u-v=ze_j$ for some $z\in\ZZ$, denote
$[u,v]=\{w\in \mathbb{Z}^n \colon\, w= u+l(z/|z|)e_j, \, l=0, \ldots, |z|\}$
(if $z = 0$, then $u=v$ so set $[u,v] = \{u\}$).
Let $\Gamma(0,0):=\bigcup_{j=1}^n [p_{j-1},p_j]\cup[q_{j-1},q_j]$ and $\Gamma(x,y)=\{A(x,y)+v:v\in\Gamma(0,0)\}$ which contains a path that starts at $A(x,y)$ and ends at $A(x+1,y)$ and another that starts at $A(x,y)$ and ends at $A(x,y+1)$.
Therefore, if we denote
\[
\mathcal{G}:=\bigcup_{(x,y)\in\ZZ^2}\Gamma(x,y),
\]
then, when regarded as a subgraph of the $\mathbb{Z}^n$ lattice, $\mathcal{G}$ is connected.

As mentioned above, we will study the percolation process restricted to $\mathcal{G}$ which we hope will dominate a supercritical percolation process.
In implementing these ideas, the standard results of Liggett, Schonmann and Stacey \cite{Ligget} are very useful.
Before we state it precisely, let us give the relevant definitions.
 
A random element $\big(f(x)\big)_{x\in\ZZ^n} \in \{0,1\}^{\mathbb{Z}^n}$ is said of class $C(n,\chi,p)$ if for every $x\in\ZZ^n$ and $S\subset\ZZ^n$ such that $\inf\{\|a-x\|:a\in S\}\geq \chi$,
we have $\PP\big(f(x)=1|(f(a))_{a\in S}\big)\geq p$.
Such elements appear naturally when performing one-step renormalization arguments.
We are ready to state a result that will help to control the process restricted to $\mathcal{G}$ and will also be used in Section \ref{sec:polynomial_decay}. 
It consists of a rephrasing of the part of the statement of Theorem 0.0 in \cite{Ligget} that serves our purposes.
\begin{theorem}[Theorem 0.0 in \cite{Ligget}]
\label{remark Ligget}
For every $\rho>0$ and $\chi$ there exists $p_0$ such that every random element $(f(x))_{x\in\ZZ^n}$ of class $C(n,\chi,p)$ with $p>p_0$ dominates stochastically an i.i.d.\ family of Bernoulli random variables $(g(x))_{x\in\ZZ^n}$ such that $\PP(g(x)=1)=\rho$.
Moreover, $\rho$ can be taken arbitrarily close to $1$ provided that $p_0$ is also made sufficiently close to $1$.
\end{theorem}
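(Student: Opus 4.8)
The plan is to obtain the statement directly from Theorem~0.0 in \cite{Ligget}: being of class $C(n,\chi,p)$ is precisely the one-sided conditional hypothesis under which the Liggett--Schonmann--Stacey comparison principle applies, namely that $\PP\big(f(x)=1\mid(f(a))_{a\in S}\big)\geq p$ uniformly over $x$ and over all $S$ lying at distance at least $\chi$ from $x$. Granting this identification, that theorem furnishes an i.i.d.\ family dominated from below whose density $\rho(p,\chi,n)$ tends to $1$ as $p\to1$ (with $\chi$ and $n$ fixed); choosing $p_0=p_0(\rho,\chi,n)<1$ so large that $\rho(p,\chi,n)\geq\rho$ for every $p>p_0$ then gives both assertions, since domination of a product Bernoulli measure of density $\rho(p,\chi,n)$ entails domination of one of density $\rho$. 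So in the paper the proof is essentially the translation between our formulation and that of \cite{Ligget}.

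For completeness I would also recall the structure of the underlying argument. First I would partition $\ZZ^n$ into the $N:=\chi^n$ cosets $V_{\mathbf r}:=\mathbf r+(\chi\ZZ)^n$, $\mathbf r\in\{0,\ldots,\chi-1\}^n$, observing that any two distinct sites of a fixed $V_{\mathbf r}$ are at $l_1$-distance at least $\chi$. For a fixed class $V_{\mathbf r}$ and any $x\in V_{\mathbf r}$, the set $S:=V_{\mathbf r}\setminus\{x\}$ then lies at distance at least $\chi$ from $x$, so the class $C(n,\chi,p)$ property gives $\PP\big(f(x)=1\mid(f(a))_{a\in S}\big)\geq p$; since conditioning on a sub-$\sigma$-algebra preserves such a lower bound (tower property), for any enumeration $x_1,x_2,\ldots$ of $V_{\mathbf r}$ one has $\PP\big(f(x_m)=1\mid f(x_1),\ldots,f(x_{m-1})\big)\geq p$, and a routine sequential coupling shows that $(f(x))_{x\in V_{\mathbf r}}$ dominates an i.i.d.\ Bernoulli$(p)$ family on $V_{\mathbf r}$.

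The main obstacle is that the $N$ classes $V_{\mathbf r}$ are not mutually independent, so one cannot simply take the product of these within-class dominations. To get around this I would follow \cite{Ligget}: reveal the classes one at a time and, when treating $V_{\mathbf r}$, apply the class $C(n,\chi,p)$ bound again---now conditioning also on all previously revealed classes, some of whose sites may be arbitrarily close to $x$---inside a coupling that surrenders a controlled amount of density at each of the $N$ stages. Iterating over the finitely many classes produces a product measure dominated from below with density $\rho(p,\chi,n)$, and the content of the Liggett--Schonmann--Stacey estimates is exactly that this density stays bounded away from zero, and in fact converges to $1$, as $p\to1$. I expect this last point---quantifying the density lost when conditioning on nearby, already-revealed sites from the other classes---to be the only genuine difficulty, and since it is carried out in full in \cite{Ligget}, in the paper I would simply invoke that reference.
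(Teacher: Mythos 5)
Your proposal is correct and matches the paper exactly: the statement is a rephrasing of Theorem 0.0 of Liggett--Schonmann--Stacey, and the paper, like you, simply invokes that reference rather than reproving it. Your supplementary sketch of the underlying coset-decomposition argument is accurate but not needed, since the paper offers no proof of its own beyond the citation.
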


Let $\{\eta(x,y)\}_{(x,y)\in\ZZ^2}$ be such that
\begin{equation}
\label{e:etaxy}
\eta(x,y)=\begin{cases}
1, \mbox{ if all sites in }\Gamma(x,y) \mbox{ are open, }\\
0, \mbox{ otherwise.}
\end{cases}
\end{equation}

\begin{lemma}
\label{lemma the subgraph G} 
There exists $\chi\in\NN$ and $s=s(\EE_{\mathbf{p}}(\omega(o)))$ such that, under $\mathbb{P}_\mathbf{p}$, the process $\eta$ given by \eqref{e:etaxy} is of class $C(2,\chi,s)$. 
Furthermore, $s$ can be made arbitrarily close to $1$ provided that all the parameters $p_I$ is made close enough to $1$.
\end{lemma}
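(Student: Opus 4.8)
The plan is to derive the lemma from the separation property of the inclined base plane established in Lemma~\ref{lemma construcao da base da rede}, which converts ``far apart in $\ZZ^2$'' into ``independent under $\PP_{\mathbf{p}}$''. Write $M:=\#\Gamma(0,0)$ and $R_0:=\max_{v\in\Gamma(0,0)}\|v\|$; both are finite and depend only on $n$, $w_1$ and $w_2$. Since $\Gamma(x,y)=A(x,y)+\Gamma(0,0)$ and each $\pi_I$ is linear, for every $I\in\mathcal{I}(k;n)$ one has $\pi_I(\Gamma(x,y))=\pi_I(A(x,y))+\pi_I(\Gamma(0,0))$, so $\pi_I(\Gamma(x,y))$ lies in the $l_1$-ball of radius $R_0$ centered at $\pi_I(A(x,y))$ in $\ZZ^k_I$ (recall $\|\pi_I(v)\|\le\|v\|$). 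Combining this with property~ii.\ of Lemma~\ref{lemma construcao da base da rede}, namely $\|\pi_I(A(x,y))-\pi_I(A(a))\|\ge c\,\|(x,y)-a\|$, and choosing $\chi:=\lfloor 2R_0/c\rfloor+1\in\NN$, we obtain: whenever $\|(x,y)-a\|\ge\chi$ the balls of radius $R_0$ around $\pi_I(A(x,y))$ and $\pi_I(A(a))$ are disjoint, hence $\pi_I(\Gamma(x,y))\cap\pi_I(\Gamma(a))=\varnothing$ for every $I\in\mathcal{I}(k;n)$.

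Next I would upgrade this to an independence statement. By \eqref{eq:def_omega} and \eqref{e:etaxy}, $\eta(x,y)$ is a deterministic function of $\{\omega_I(u):u\in\pi_I(\Gamma(x,y)),\,I\in\mathcal{I}(k;n)\}$, while, for $S\subset\ZZ^2$, the vector $(\eta(a))_{a\in S}$ is a deterministic function of $\{\omega_I(u):u\in\bigcup_{a\in S}\pi_I(\Gamma(a)),\,I\in\mathcal{I}(k;n)\}$. If $\inf\{\|a-(x,y)\|:a\in S\}\ge\chi$, the first paragraph shows that for every $I$ the set $\pi_I(\Gamma(x,y))$ is disjoint from $\bigcup_{a\in S}\pi_I(\Gamma(a))$; since the Bernoulli variables $(\omega_I(u))_{u\in\ZZ^k_I,\,I\in\mathcal{I}(k;n)}$ are mutually independent under $\PP_{\mathbf{p}}$, it follows that $\eta(x,y)$ is independent of $(\eta(a))_{a\in S}$, and therefore
\[
\PP_{\mathbf{p}}\big(\eta(x,y)=1\mid(\eta(a))_{a\in S}\big)=\PP_{\mathbf{p}}\big(\eta(x,y)=1\big)\qquad\text{a.s.}
\]

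Finally I would bound the right-hand side from below, uniformly in $(x,y)$. Using \eqref{eq:def_omega} and the mutual independence of the processes $\omega_I$,
\[
\PP_{\mathbf{p}}\big(\eta(x,y)=1\big)=\prod_{I\in\mathcal{I}(k;n)}\PP_{p_I}\Big(\bigcap_{u\in\pi_I(\Gamma(x,y))}\{\omega_I(u)=1\}\Big)=\prod_{I\in\mathcal{I}(k;n)}p_I^{\#\pi_I(\Gamma(x,y))}\ \ge\ \Big(\prod_{I\in\mathcal{I}(k;n)}p_I\Big)^{\!M}=\big[\EE_{\mathbf{p}}(\omega(o))\big]^{M},
\]
since $\#\pi_I(\Gamma(x,y))\le\#\Gamma(0,0)=M$ and $p_I\in[0,1]$. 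Taking $s:=[\EE_{\mathbf{p}}(\omega(o))]^{M}$ — a function of $\EE_{\mathbf{p}}(\omega(o))$ only, as $M$ is a fixed constant — the last two displays show that $\eta$ is of class $C(2,\chi,s)$; and since $M$ is fixed while $\EE_{\mathbf{p}}(\omega(o))=\prod_{I}p_I\to 1$ when all the $p_I\to1$ (by \eqref{e:exp_p}), $s$ can be made arbitrarily close to $1$, as claimed.

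There is no serious obstacle in this argument; the only subtlety is that the coordinate projections $\pi_I$ are \emph{not} injective on $\ZZ^n$, so two translates $\Gamma(x,y)$ and $\Gamma(a)$ that are far apart in $\ZZ^n$ could \emph{a priori} still have overlapping images under some $\pi_I$. This is exactly what property~ii.\ of Lemma~\ref{lemma construcao da base da rede} rules out, and it is the reason the construction uses the inclined plane $A(\ZZ^2)$ rather than one of the coordinate planes.
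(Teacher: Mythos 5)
Your proposal is correct and follows essentially the same route as the paper: it uses property ii.\ of Lemma~\ref{lemma construcao da base da rede} to choose $\chi$ proportional to (size of $\Gamma(0,0)$)$/c$ so that far-apart translates have disjoint projections in every $\mathbb{Z}^k_I$, deduces independence from the independence of the underlying $\omega_I$'s, and lower-bounds $\PP_{\mathbf p}(\eta(x,y)=1)$ by a fixed power of $\EE_{\mathbf p}(\omega(o))$. The only (immaterial) difference is that you compute the exact probability $\prod_I p_I^{\#\pi_I(\Gamma(x,y))}\ge[\EE_{\mathbf p}(\omega(o))]^{M}$ directly, whereas the paper enlarges $\Gamma(v)$ to the ball $B(Av;R)$ and invokes \eqref{e:box_open} to get the exponent $(2R+1)^k$.
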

\begin{proof} For $x\in\ZZ^n$ and $R>0$, let $B(x;R)$ be the set of sites $y\in\ZZ^n$ such that $\|x-y\|\leq R$.
Let $R=\|w_1\| = \|w_2\|$. 
Observe that for each $v\in\ZZ^2$ we have that $\Gamma(v)\subset B(Av;R)$. In particular, for all $I\in\mathcal{I}(k;n)$, $\pi_I( \Gamma(v) )\subset \pi_I( B (\pi_I\circ A v; R ) )$. 
By Lemma \ref{lemma construcao da base da rede}-$ii$.\ there exists $c>0$ such that
$$\|\pi_I(Av-Au)\|\geq c\|u-v\|.$$
Thus, the choice $\chi=3R/c$ gives that $\pi_I(B(\pi_I \circ Au; R ) )$ and $\pi_I( B (\pi_I\circ A v; R ) )$  are disjoint provided that $\|v-u\|\geq \chi$. In particular $\eta(v)$ is independent of $\{\eta(u):u\in\ZZ^2,\|v-u\|\geq\chi\}$.

Since $\Gamma(v) \subset B(Av;R)$ we can use \eqref{e:box_open} to obtain
\begin{align*}
\PP_\mathbf{p}(\eta(v)=1)
\geq \PP_\mathbf{p}\big(\mbox{all sites in }B(Av,R)\mbox { are open} \big)
\geq \big[\EE_\mathbf{p}(\omega(o))\big]^{(2R+1)^k}.
\end{align*}
Hence $\eta$ is of class $C\big(2,3R/c,[\EE_\mathbf{p} (\omega(o))]^{(2R+1)^k}\big)$. 
\qed
\end{proof}

We are now ready to present the proof of \ref{theorem existence of percolation}.
\begin{proof}
[Proof of Theorem \ref{theorem existence of percolation}] 
In light of Theorem \ref{remark Ligget}, we can  choose all $p_I<1$ sufficiently close to $1$ so that $\EE \omega(o)=\prod_{I\in\mathcal{I}}p_I$  is large enough to guarantee that the process $\eta$ defined in \eqref{e:etaxy}  dominates stochastically a standard supercritical Bernoulli site percolation process in $\ZZ^2$.
In particular, with positive probability we have that $\mathcal{V}_\eta(o)$ is infinite. We conclude by observing that each path $\{o,v_1,v_2\ldots,\}\in\ZZ^2$ of $\eta$-open sites such that $\lim_{j\to\infty}\|v_j\|=\infty$ can be mapped into a path  $\{o,x_1,x_2,\ldots\}\subset\mathcal{G}$ with $\lim_{j\to\infty}\|x_j\|=\infty$ and whose sites are $\omega$-open. 
\qed
\end{proof}

\section{Polynomial decay of connectivity}
\label{sec:polynomial_decay}

In this section we prove Theorem \ref{t:poly_dec_1} and indicate the few modifications that lead to the proof of Theorem \ref{theorem power law projetando em k}.
Our method follows essentially the ideas presented in \cite{HilSid} for Bernoulli line percolation.
However, there is a complication and we need to adapt Lemma 4.7 therein to the higher dimension setting. 
The main problem is that the proof presented in \cite{HilSid} only works in $3$-dimensions.
We replace that result by our Proposition \ref{lemma dos caminhos} whose proof relies on Lemma \ref{lemma dos random walks}.

\subsection{Crossing events}
Given integers $a<b$ and $c<d$ and $\{\eta(x)\}_{x\in\ZZ^2} \in \{0,1\}^{\mathbb{Z}^2}$ we say that there is a bottom to top crossing in the rectangle $R:=[a,b]\times[c,d]\cap\ZZ^2$ if there is a path $\{(x_0,y_0),\ldots,(x_T,y_T)\}\subset R$ of
$\eta$-open sites such that $y_0=c$ and $y_T=d$.
We denote $\mathcal{BT}(R)$ the event that such a cross occur that is, the set of all the configurations $\eta$ for which there is a bottom to top crossing in $R$.
Similarly, we say that there is a left to right crossing in the rectangle $R$ if there exists a path of open sites $\{(w_0,z_0),\ldots,(w_H,z_H)\}\subset R$ with $w_0=a$ and $w_H=b$, and similarly, we denote this event by $\mathcal{LR}(R)$.

Let $N$ and $\{m_j\}_{j=2}^n$ be non-negative integers and denote
\[
B=B(N,m_2,\ldots,m_n):=[0,N]\times[0,m_2]\times\ldots\times[0,m_n]\cap\ZZ^n.
\]
Throughout this section we will regard the first coordinate as measuring the height of the rectangle $B$. 
Thus, for a random element  $\{\eta(x)\}_{x\in\ZZ^n} \in \{0,1\}^{\mathbb{Z}^n}$, we can refer to bottom to top crossings in $B$: We denote by $\mathcal{BT}(B)$ the set of all the configurations for which there exists a path of $\eta$-open sites $\{x_0,\ldots,x_T\}\subset B$ such that $\pi_{\{1\}}(x_0)=0$ and $\pi_{\{1\}}(x_T)=N$.

Let $k=2$ and $I_j=\{1,j\}$, $2\leq j \leq n$.
Notice that the set $\pi_{I_j}(B)\subset \ZZ_{I_j}^2$ is isomorphic to a rectangle in $\ZZ^2$ with side lengths $m_j$ and $N$ corresponding to the $j_{th}$ and first coordinate respectively.
Define
\[
\xi(x) = \omega_{I_2}(\pi_{I_2}(x)) \cdots \omega_{I_n}(\pi_{I_n}(x))
\]
Notice that $\xi \leq \omega$ (see \eqref{eq:def_omega}).
If there is a bottom to top crossing in $B$ of sites $x \in \mathbb{Z}^n$ that are $\omega_{I_j}$-open for all $2\leq j\leq n$ (which is to say, $\xi \in \mathcal{BT}(B)$) then a simple projection onto the coordinate planes $\mathbb{Z}^2_{I_j}$ show that $\omega_{I_j} \in \mathcal{BT}(\pi_{I_j}(B))$ for all $2\leq j\leq n$.
Our next result states that the converse is also true.
For that, given paths that cross the projections $\pi_{I_j}(B)$ from top to bottom we will need to construct a path inside $B$ which is projected under $\pi_{I_j}$ to the given crossing in $\pi_{I_j}(B)$.
Although it may sound somewhat intuitive that it is possible to do so, we did not find any existing proof for this fact. 
Therefore, we have produced a combinatorial proof that may be interesting in its own.

\begin{proposition}
\label{lemma dos caminhos} 
Let $k=2$.
For each $i=2,\ldots,n$, let $\gamma_{I_j}:\{0\} \cup [H_j]\to \pi_{I_j}(B)$ be a path composed of $\omega_j$-open sites such that: $\pi_{\{1\}}\circ\gamma_{I_j}(0)=0$, $\pi_{\{1\}}\circ\gamma_{I_j}(H_j)=N$ and for all $0\leq t <H_j$ we have $\pi_{\{1\}}\circ\gamma_{I_j}(t)<N$.
Then there exists a path $\lambda\colon \{0\}\cup [T] \to B$ whose sites are $\omega_{I_j}$-open and satisfying that for every $j =2, \ldots, n$, $\pi_{I_j} (\lambda(0)) = \gamma_{I_j}(0)$ and that $\pi_{I_j} (\lambda(T)) = \gamma_{I_j} (H_j)$.
In particular, if $\omega_{I_j}\in \mathcal{BT}(\pi_{I_j}(B))$ for all $2\leq j \leq n$, then there exists a bottom to top crossing in $B$ whose sites are $\omega_{I_j}$-open for each $2\leq j\leq n$.

\end{proposition}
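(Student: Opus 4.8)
The plan is to build the path $\lambda$ inside $B$ by following the $n-1$ crossings $\gamma_{I_j}$ in a synchronized, staged fashion, ``height by height.'' Recall that $B = [0,N]\times[0,m_2]\times\cdots\times[0,m_n]$ and that the first coordinate is regarded as the height. Each $\gamma_{I_j}$ lives in $\pi_{I_j}(B)\cong [0,N]\times[0,m_j]$, where the first coordinate records the height and the second the $j$-th coordinate. The key structural observation is that a path in $\mathbb{Z}^n$ moving from height $0$ to height $N$ can be decomposed into maximal ``horizontal'' stretches at a fixed height, separated by single ``vertical'' steps that increment (or decrement) the first coordinate by one. The idea is to process the heights in increasing order: at each height level, we must reconcile the positions dictated by the several $\gamma_{I_j}$'s (which in general will want to be at that height for different stretches of their own time-parameter), and between consecutive height levels we insert the vertical steps.

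The first step I would carry out is to reparametrize. For each $j$, decompose $\gamma_{I_j}$ into its successive visits to each height $h\in\{0,1,\ldots,N\}$: let the portion of $\gamma_{I_j}$ at height $h$ be a (possibly empty except at $h=0$ and by the hypothesis $\pi_{\{1\}}\circ\gamma_{I_j}(t)<N$ for $t<H_j$, so height $N$ is visited exactly once at the very end) union of horizontal sub-paths in the $j$-th coordinate direction. Actually, since the last step of $\gamma_{I_j}$ is the unique arrival at height $N$, and the first site is at height $0$, the path alternates between horizontal runs and vertical (height $\pm 1$) steps. The second step is the main construction: I would build $\lambda$ inductively on the height $h$. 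Suppose inductively that we have constructed a prefix of $\lambda$ ending at a site $v^{(h)}\in B$ at height $h$ whose projection $\pi_{I_j}(v^{(h)})$ coincides, for each $j$, with the site where $\gamma_{I_j}$ last sat at height $h$ before moving up to height $h+1$ (call this the ``$h$-departure point'' of $\gamma_{I_j}$). To extend $\lambda$ to height $h+1$: first move $\lambda$ horizontally, one coordinate direction $j$ at a time (for $j=2,\ldots,n$ in order), from the current $j$-th coordinate to the $j$-th coordinate of the ``$(h{+}1)$-arrival point'' of $\gamma_{I_j}$ — each such move changes only coordinate $j$ and stays at height $h$, so its $\pi_{I_j}$-image traces out exactly the corresponding horizontal sub-run of $\gamma_{I_j}$ at height $h$ (here one must be slightly careful that $\gamma_{I_j}$ at height $h$ may consist of several disjoint horizontal runs at possibly distinct heights in between — handled by doing the bookkeeping at the level of ``time intervals of $\gamma_{I_j}$ spent weakly below height $h$''); then take one vertical step in direction $e_1$ from height $h$ to height $h+1$. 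The crucial point is that when $\lambda$ moves in direction $e_j$ at height $h$, for every index $\ell\neq j$ the site's $\pi_{I_\ell}$-image does not move at all, so we never risk stepping onto an $\omega_{I_\ell}$-closed site provided the corresponding $\pi_{I_\ell}$-coordinate already agrees with a site on $\gamma_{I_\ell}$; and the $\pi_{I_j}$-image does move, but exactly along $\gamma_{I_j}$, hence through $\omega_{I_j}$-open sites only.

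The main obstacle — and where Lemma \ref{lemma dos random walks} will be invoked — is precisely the reconciliation across the different $j$'s: the several crossings $\gamma_{I_j}$ need not spend their horizontal runs at mutually compatible heights in a naively matchable way, so one cannot just ``do all the $j$'s at height $h$ and move up.'' The clean way around this is to think of each $\gamma_{I_j}$ as tracking a height coordinate $h_j(t)$ as a function of its own time, and to produce a single common height profile $h(s)$ (a nearest-neighbor walk from $0$ to $N$ that stays below $N$ until the end) together with, for each $j$, a nondecreasing time-change $t\mapsto \tau_j$ such that $h(s) = h_j(\tau_j)$ whenever $\lambda$ is meant to be advancing $\gamma_{I_j}$; Lemma \ref{lemma dos random walks} is what guarantees such a simultaneous monotone coupling of the $n-1$ height-walks exists. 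Granting that, I would define $\lambda$ segment by segment along the common profile: on a maximal stretch where $h(s)$ is constant equal to $h$, advance in turn each direction $e_j$ for which $\gamma_{I_j}$ has horizontal progress to make at height $h$; at each increment $h\to h\pm1$ of the profile, insert the single $\pm e_1$ step. Finally I would check the two endpoint conditions: at $s=0$ all $\tau_j=0$ so $\pi_{I_j}(\lambda(0))=\gamma_{I_j}(0)$, and at the terminal $s$ the profile reaches $N$ for the first time, forcing each $\tau_j = H_j$, whence $\pi_{I_j}(\lambda(T)) = \gamma_{I_j}(H_j)$; openness of every site of $\lambda$ under each $\omega_{I_j}$ follows from the observation above that every step of $\lambda$ either fixes $\pi_{I_j}$ or moves it along $\gamma_{I_j}$. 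The ``in particular'' clause is then immediate: taking each $\gamma_{I_j}$ to be a bottom-to-top crossing of $\pi_{I_j}(B)$ (which, after truncating at its first arrival at height $N$, satisfies the hypotheses), the resulting $\lambda$ is a bottom-to-top crossing of $B$ through sites open for every $\omega_{I_j}$, $2\le j\le n$.
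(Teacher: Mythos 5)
Your proposal follows essentially the same route as the paper: extract the height processes of the crossings $\gamma_{I_j}$ at their height-change times, synchronize them via Lemma \ref{lemma dos random walks}, and then interleave the horizontal runs with single $e_1$-steps, verifying openness through the projections $\pi_{I_j}$. One small correction: Lemma \ref{lemma dos random walks} does not provide nondecreasing time-changes (its $f_j$ have steps $\pm1$ and may backtrack, and a monotone synchronization is impossible in general), but this does not harm your construction, since retracing part of a $\gamma_{I_j}$ only revisits $\omega_{I_j}$-open sites.
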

Proposition \ref{lemma dos caminhos} implies that $[\xi \in \mathcal{BT}(B)]=\bigcap_{j=2}^n\mathcal{BT}(\pi_{I_j}(B))$ hence, by independence:
\begin{equation}\label{equation probabilidade path lemma}
\PP(\xi \in \mathcal{BT}(B))=\prod_{j=2}^n \PP_{p_{I_j}}( \mathcal{BT}(\pi_{I_j}(B)),
\end{equation}
where $\mathbb{P}$ stands for $ \otimes_{I \in \mathcal{I}(k;n)} \mathbb{P}_{p_I}$.
For $n=3$ and $k=2$, this result has already been proved in \cite[Lemma 4.7]{HilSid}. 
Here we extend this result for any $n\geq 3$ and $k=2$.

For the proof of Proposition \ref{lemma dos caminhos} we use the following lemma that is inspired by the Two Cautious Hikers Algorithm discussed by G.~Pete in \cite[page 1722]{G.Pete}.

\begin{lemma}
\label{lemma dos random walks} 
Let $N$ and $\{T_i\}_{i=1}^n$ be non-negative integers.
Let, for each $i\in[n]$,  $S_i:\{0\}\cup [T_i]\to \{0\}\cup  [N]$ be functions satisfying:\\
\noindent i. $|S_i(t)-S_i(t-1)|=1$, $\forall t\in [T_i]$ ;\\
ii. $0\leq S_i(t)< N$, for all $0\leq t < T_i$;\\
iii. $S_i(0)=0$ and $S_i(T_i)=N$.

\noindent Then there exists $T\in\NN$ and $f_i:\{0\}\cup [T]\to\{0\}\cup [T_i]$, $1\leq i\leq n$, that satisfy:\\
a) $|f_i(t)-f_i(t-1)|=1$, for all $t\in[T]$;\\
b) $S_1\circ f_1(t) = S_j \circ f_j(t)$, for all $t\in\{0\}\cup [T]$, for each $1\leq j\leq n$;\\
c) $S_1(f_1(0))=0$ and $S_1(f_1(T))=N$.
\end{lemma}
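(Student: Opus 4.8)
The plan is to deduce the lemma from the case $n=2$, which is the crux, and to prove that case by a planar--duality (discrete Jordan curve) argument.

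\emph{Reduction to $n=2$.} I would prove by induction on $n$ the slightly stronger statement that one can moreover take the common height function $t\mapsto S_1(f_1(t))$ to stay in $\{0,\dots,N-1\}$ for all $t<T$ (equivalently, to take a ``self-avoiding'' synchronization). The case $n=1$ is immediate ($T=T_1$, $f_1=\mathrm{id}$). For the inductive step, apply the hypothesis to $S_1,\dots,S_{n-1}$ to get $T'$ and reparametrizations $g_1,\dots,g_{n-1}$ with $S_1\circ g_1=\dots=S_{n-1}\circ g_{n-1}$; then $\widetilde S:=S_1\circ g_1$ is itself a function on $\{0\}\cup[T']$ satisfying i--iii with the same $N$ (conditions i and iii are automatic, and ii is exactly the strengthening carried along). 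Now apply the $n=2$ case to the pair $(\widetilde S,S_n)$, obtaining $T$ and reparametrizations $h$ (of $\widetilde S$) and $h'$ (of $S_n$), and set $f_i:=g_i\circ h$ for $i\le n-1$ and $f_n:=h'$. One checks directly that each $f_i$ has $\pm1$ steps, that $S_i\circ f_i=\widetilde S\circ h$ for every $i\le n$ (so b) holds), and that c) and the strengthening are inherited from the $n=2$ step.

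\emph{The case $n=2$.} Let $Z:=\{(p,q):0\le p\le T_1,\ 0\le q\le T_2,\ S_1(p)=S_2(q)\}$ and put an edge between $(p,q)$ and $(p',q')$ whenever $|p-p'|=|q-q'|=1$. A synchronization of $S_1$ and $S_2$ is precisely a path in this graph, so it suffices to join $(0,0)$ and $(T_1,T_2)$ in $Z$; taking a self-avoiding such path automatically keeps the common height below $N$ before the final step, since height $N$ on either walk forces the endpoint position. Because $S_i(p)\equiv p\pmod 2$, the set $Z$ lives on the sublattice $\{p+q\ \text{even}\}$, and a $45^\circ$ rotation identifies this sublattice with its diagonal adjacency with a copy of $\mathbb{Z}^2$, inside which $Z$ becomes an ordinary site configuration on a (tilted) rectangular domain. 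Consider $h(p,q):=S_1(p)-S_2(q)$: it is even-valued and changes by an element of $\{-2,0,2\}$ across every edge of that $\mathbb{Z}^2$ and of its matching (king-move) lattice, so $\{h>0\}$ and $\{h<0\}$ are non-adjacent even in the matching lattice. On the boundary of the domain, $h\ge 0$ along the two sides $\{q=0\}$ and $\{p=T_1\}$ while $h\le 0$ along $\{p=0\}$ and $\{q=T_2\}$; hence the boundary circle is cut by the two points $(0,0)$ and $(T_1,T_2)$ into an arc along which $h\ge 0$ and an arc along which $h\le 0$. By a standard planar duality argument (discrete Jordan curve theorem), either $Z=\{h=0\}$ contains a path from $(0,0)$ to $(T_1,T_2)$, or else $\{h\neq 0\}$ contains a matching-lattice path joining these two arcs; the latter would be confined to $\{h>0\}$ or to $\{h<0\}$ and so could not reach one of the two arcs, a contradiction. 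This produces the required path.

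The main obstacle is the $n=2$ case, and specifically the bookkeeping for the duality argument: setting up the tilted rectangle and its matching lattice and invoking the planar crossing dichotomy with the boundary marked by the two points $(0,0)$ and $(T_1,T_2)$. (One could instead mimic Pete's ``two cautious hikers'' more literally, running the greedy rule ``move both walkers up whenever some common up-step exists, otherwise move both down'' and checking that a configuration with one walker at a local minimum and the other at a local maximum of the same height is never reached from $(0,0)$, so that a legal move always exists; but this still leaves one to prove termination, which the duality argument avoids.)
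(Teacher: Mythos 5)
Your proposal is correct in outline but takes a genuinely different route from the paper's proof. The paper handles all $n$ at once by a parity (handshake) argument on the $n$-dimensional synchronization graph: vertices are tuples $(t_1,\dots,t_n)$ with $S_1(t_1)=\dots=S_n(t_n)$, edges require every coordinate to change by $\pm1$, and a case analysis of the local step types (up-step, down-step, local min, local max) shows every vertex has even degree except $(0,\dots,0)$ and $(T_1,\dots,T_n)$, which have degree $1$; hence these two odd-degree vertices lie in the same component, and any path between them yields the $f_i$'s. You instead reduce to $n=2$ by induction, carrying the strengthening that the common height stays below $N$ before the final time (correctly obtained from a self-avoiding choice of path, since $S_i$ attains $N$ only at $T_i$), and settle $n=2$ by a discrete Jordan-curve dichotomy for $Z=\{(p,q):S_1(p)=S_2(q)\}$ on the even sublattice, using that $h=S_1-S_2$ is even-valued and changes by at most $2$ across matching edges, so $\{h>0\}$ and $\{h<0\}$ are matching-separated by $Z$, while the two boundary arcs determined by $(0,0)$ and $(T_1,T_2)$ carry opposite signs. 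This is sound, and your parity observation on $h$ is the planar shadow of the degree parity the paper exploits; what your route buys is a geometric picture and the reuse of a classical percolation duality tool, while the paper's buys uniformity in $n$ (no induction, no strengthened hypothesis) and avoids planar topology entirely. The one point to nail down is the exact duality statement: your domain is a diamond (tilted rectangle) whose two distinguished boundary vertices are open corners, so you need the crossing dichotomy for a planar matching pair on a Jordan domain with four marked boundary arcs, two of them degenerate to the single vertices $(0,0)$ and $(T_1,T_2)$, and with the dual closed matching path required to have its endpoints on the two complementary arcs (this is what makes your sign contradiction immediate); that version is true but is not the off-the-shelf rectangle-crossing statement, so it deserves a precise formulation or a reference.
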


Before we give a proof for this lemma let us clarify its statement.
The functions $S_i$ can be thought of as $n$ different random walks parametrized by $t \in \{0, \dots T_i\}$ and that can at each step jump one unit up, one unit down or remain put.
They are required to start at height $0$, to remain above $0$ and finish at height $N$.
The conclusion is that it is possible to introduce delays to the individual random walks or even require them to backtrack (by means of composing them the $f_i$'s) so that they will all be parametrized by the same interval $\{0,\ldots, T\}$ and always share the same height for any time inside this interval.
The arguments in \cite{G.Pete} can be modified in order to obtain a proof for $n=2$.
Below we present a proof that works for general $n$.

\begin{proof}[Proof of Lemma \ref{lemma dos random walks}] 
Let $ G=(V(G), E(G))$ be a graph with vertex set $V(G) = \big\{v=(t_1,\ldots,t_n)\in\NN^n \colon\, \text{$t_i \in \{0,\ldots, T_i\}$ and $S_1(t_1)=S_j(t_j)$ for every $1\leq j\leq n$}\big\}$ and whose edge set $E(G)$ consists of the pairs of vertices $v=(t_1,\ldots,t_n)$ and $w=(s_1,\ldots,s_n)$ such that $|t_i-s_i|=1$, for all $1\leq i \leq n$. 
Similarly to \cite{G.Pete}, we have:

\noindent\textbf{Claim:} \textit{The degrees of every vertex $v\in V(G)$ are even, except for $(0,\ldots,0)$ and $(T_1,\ldots,T_n)$ that are the unique vertices that have degree $1$}.

\noindent\textit{Proof of the claim:}
It is simple to verify that $(0,\ldots,0)$ and $(T_1,\ldots,T_n)$ have degree one.

For $i \in [n]$ and $t \in \{0\} \cup [T_i]$, we say that $t$ is of type:
\begin{align*}
&(i, \slash)\,\,\, \mbox{ if } \,\,\, S_i(t+1)=S_i(t-1)+2;\\
&(i, \backslash)\,\,\, \mbox{ if } \,\,\, S_i(t+1)=S_i(t-1)-2;\\
&(i, \lor)\,\,\, \mbox{ if } S_i(t+1)=S_i(t-1) \mbox{ and } \,\,\, S_i(t+1)=S_i(t)+1;\\
&(i, \land)\,\,\, \mbox{ if } S_i(t+1)=S_i(t-1) \mbox{ and } \,\,\, S_i(t+1)=S_i(t)-1.
\end{align*}

Notice that if $v=(t_1,\ldots,t_n)\in V(G)$ has at least one $t_i$ of type $(i, \lor)$ and at least one $t_j$ of type $(j,2,-)$ its degree in G has to be equal to zero. 
This is because
$S_i(t_i\pm 1)=S_i(t_i)+1=S_j(t_j)+1$, and $S_j(t_j\pm 1)=S_j(t_j)-1$, and hence $S_i(t_i\pm 1)-S_j(t_j\pm 1)=2$ which implies that every possibility for the entries $(t_i\pm1)$ and $(t_i\pm1)$ of a neighbor of $v$ would lead to an element that does not belong to $V(G)$. 
Thus, a necessary condition for the degree of $v$ to be different from $0$ is that there is a partition $[n]=A\cup B$ such that for all $j\in B$, $t_j$ is of type $(j, \slash)$ or $(j, \backslash)$ and, for all $i\in A$ either every $t_i$ is of type $(i, \lor)$ or every $t_i$ is of type $(i, \land)$.

In the case that $v=(t_1,\ldots,t_n)\in G$ is such that all $t_i$ are of type $(i, \slash)$ or $(i, \backslash)$ we have that $v$ has exactly two neighborhoods $w$ and $w' \in V(G)$:
\begin{align*}
w=\big(t_i+\ind_{\{\text{$t_i$ is of type $(i, \slash)$}\}} - \ind_{\{\text{$t_i$ is of type $(i, \backslash)$}\}}\big)_{i=1}^n,\\
w'=\big(t_i-\ind_{\{\text{$t_i$ is of type $(i, \slash)$}\}} + \ind_{\{\text{$t_i$ is of type $(i, \backslash)$}\}}\big)_{i=1}^n.
\end{align*}

In the case that $v=(t_1,..,t_n)\in V(G)$ is such that exactly $k$ entries, say $\{t_{i_j}\}_{j=1}^k$ are of type $(i, \lor)$ and the other $n-k$ entries are of type $(i, \slash)$ or $(i, \backslash)$, we obtain that
$v$ has exactly $2^k$ neighborhoods. 
This follows by induction on $k$, by observing that each of the two possibilities $t_{i_j}\pm1$ imply $S_{i_j}(t_{i_j}\pm 1)=S_{i_j}(t_{i_j})+1$. 
Similarly, this is also true in the case that exactly $k$ distinct $\{t_{i_j}\}_{j=1}^k$ are of type $(i, \land)$ and the other $n-k$ are of type $(i, \slash)$ or $(i, \backslash)$. 
This completes the proof of the claim.

Let $v^*=(T_1,\ldots,T_n)$ and $o=(0,\ldots,0)$ be the unique vertices of $G$ that have degree $1$. 
Let $\mathcal{H}$ be the largest connected subgraph of $G$ that contains $o$. The sum $\sum_{v\in\mathcal{H}}\text{degree($v$)}$ is twice the number edges of $\mathcal{H}$, in particular it is an even number. 
Thus, $\sum_{v\in\mathcal{H}\setminus \{o\} }\text{degree($v$)}$ is an odd integer, and this holds if and only if $v^*\in\mathcal{H}$. 
Hence, $o$ and $v^*$ are in the same connected component of $G$, which implies the existence of a number $T\in\NN$ and path $\gamma:\{0\} \cup [T]\to G$ with $\gamma(0)=o$ and $\gamma(T)=v^*$. 
The choice $f_i(t)=\pi_{\{i\}}(\gamma (t))$ completes the proof.
\qed
\end{proof}
\begin{proof}[Proof of Proposition \ref{lemma dos caminhos}] 
Assume that for each $2\leq j\leq n$, there exist non-negative integers $H_j$ and paths of $\omega_{I_j}$-open sites $\gamma_{I_j}:\{0\} \cup [H_j]\to \pi_{I_j}(B)$ such that: $\pi_{\{1\}}\circ\gamma_{I_j}(0)=0$, $\pi_{\{1\}}\circ\gamma_{I_j}(H_j)=N$ and for all $0\leq t <H_j$ we have $\pi_{\{1\}}\circ\gamma_{I_j}(t)<N$.

For each $2\leq j\leq n$ we define recursively a sequence of times $(\tau_j(0),\tau_j(1),\ldots,\tau_j(T_j))$ as follows:
\begin{align*}
\tau_j(0)=&\, 0;\\
\tau_j(t+1)=&\, \inf\{s\in[\tau_j(t)+1,H_j]\cap\NN:|\pi_{\{1\}}(\gamma_{I_j}(s)-\gamma_{I_j}(s-1))|=1\},
\end{align*}
and we define $T_j$ as the first time at which $\tau_j(T_j)=H_j$.

Let $S_j:=\pi_{\{1\}}\circ \gamma_{I_j} \circ \tau_j$, for each $2\leq j \leq n$. 
Then $\{S_j\}_{j=2}^n$ satisfy Conditions \emph{i.-iii.}\ in Lemma \ref{lemma dos random walks}. 
Hence, there exists a non negative integer $T>0$ and $\{f_j\}_{j=2}^n$ that satisfy Conditions \emph{a-c} in the same lemma. 
We claim that the function $\lambda:\{0\}\cup [T]\to\ B$ denoted by $\lambda(t)=(\lambda_1(t),\ldots,\lambda_n(t))$ where
\begin{align*}
\lambda_1(t)&= S_2\circ f_2(t) = \cdots = S_n\circ f_n(t),\\
\lambda_j(t)&= \pi_{\{j\}} \circ \gamma_{I_j} \circ \tau_j \circ f_j(t),\;\;2\leq j\leq n,
\end{align*}
has the following properties:\\
\emph{i.} $\lambda(t)$ is $\omega_{I_j}$-open for all $2\leq j\leq n$;\\
\emph{ii.} For each $1 \leq t\leq T$, $\lambda(t)$ and $\lambda(t-1)$ are connected by a path $\gamma_t$ of sites that are $\omega_{I_j}$-open for all $2\leq j\leq n$;\\
\emph{iii.} $\lambda_1(0)=0$ and $\lambda_1(T)=N$.

The proof will be complete once we show that these three conditions are valid.
One can check readily that Condition \emph{iii}.\ holds.
So we now prove the validity of the other two.

\noindent \textit{Validity of Condition i.} Notice that, for all $t\in[T]$,
\begin{align*}
\pi_{I_j}(\lambda(t) )=\, &\big(\lambda_1(t),\lambda_j(t)\big)
                      =\, \big(S_j\circ f_j(t),\pi_{\{j\} }\circ\gamma_{I_j}\circ \tau_j\circ f_j(t)  \big)\\
                      =\, & \big(\pi_{\{1\}}\circ \gamma_{I_j} \circ \tau_j,\pi_{\{j\} }\circ\gamma_{I_j}\circ \tau_j\circ f_j(t)  \big) =  \gamma_{I_j}\circ \tau_j\circ f_j(t).
\end{align*}
Since every site in $\gamma_{I_j}$ is $\omega_{I_j}$-open, we have $\omega_{I_j}(\pi_{I_j}(\lambda(t) ))= \omega_{I_j}(\gamma_{I_j}\circ \tau_j\circ f_j(t))=1$.

\noindent \textit{Validity of Condition ii.} For each $t\in[T]$ we have either $\lambda_1(t)=\lambda_1(t-1) + 1$ or $\lambda_1(t)=\lambda_1(t-1) - 1$.
Let us assume the former holds.
The latter can be treated similarly.

There are integers $\{x_j\}_{j=2}^n$ such that
\[
\lambda(t)-\lambda(t-1)=(1,x_2,x_3,\ldots,x_n),
\]
and in particular:
\[
\gamma_{I_j}\circ \tau_j \circ f_j(t)-\gamma_{I_j}\circ \tau_j \circ f_j(t-1)  = e_1+x_je_j.
\]
We claim that $\lambda(t)-e_1$ is $\omega_{I_j}$-open for all $2\leq j\leq n$. 
To see this, notice that $\pi_{I_j}(\lambda(t)-e_1)=\gamma_{I_j}\circ \tau_j \circ f_j(t) -e_1$, and that only two possibilities may happen: either $f_j(t)-f_j(t-1)=-1$ or $f_j(t)-f_j(t-1)=+1$. In any case, denoting $a_j=f_j(t)-f_j(t-1)$, we have
\[
\gamma_{I_j}\circ \tau_j \circ f_j(t) -e_1 = \gamma_{I_j}( \tau_j\circ f_j(t) -a_j ),
\]
and hence $\lambda(t)-e_1$ also is $\omega_{I_j}$-open for all $2\leq j\leq n$.
Let $p_0=\lambda(t)$, $p_1=\lambda(t)-e_1$, and $p_k=p_{k-1}-x_{k}e_{k}$, $2\leq k\leq n$. In particular, $p_{n}=\lambda(t-1)$. Let $[p_j,p_{j+1}]$ be the path that goes along the line segment of points $x\in\ZZ^n$ that has $p_j$ and $p_{j+1}$ as its extremes. 
We claim that $\gamma_t=\bigcup_{j=1}^n [p_{j-1},p_{j}]$ is a path of sites fulfilling Condition \emph{ii.} Start with $y$ in the line segment $[p_1,p_2]$. 
Thus, $\pi_{1}(y)=\pi_{1}(p_1)$ and it is $\omega_{I_j}$-open for all $2\leq k\leq n$: The case $k=2$, follows from the definition of $\gamma_{I_2}$ and $\tau_2$; for $k>2$ we have $\omega_{I_k}(\pi_{I_k}(y)) = \omega_{I_k}( \pi_{I_k}(p_1))=1$. Inductively, each $y$ in the line segment $[p_j,p_{j+1}]$ has the following properties:
\begin{align*}
\pi_{1}(y)= &\,\, \pi_{1}(p_1)\\
\omega_{I_k} ( \pi_{I_k}(y) )= &\,\,  \omega_{I_k} (\pi_{I_k}(p_n)) = 1, \mbox{ for each }2\leq k\leq j,\\
\omega_{I_{j+1}} ( \pi_{I_{j+1}}(y) )= &\,\, 1, \mbox{ by the definition of } \gamma_{I_{j+1}} \mbox{ and }\tau_{j+1},\\
\omega_{I_k} ( \pi_{I_k}(y) )= &\,\, \omega_{I_k} (\pi_{I_k}(p_1)) = 1, \mbox{ for each }j+1<k\leq n.\\
\end{align*}
This completes the proof. 
\qed
\end{proof}

\subsection{Percolation on a renormalized lattice}
In this section we will define a percolation process in a renormalized lattice whose sites can be matched to hypercubes from the original lattice, called boxes.
We restrict ourselves to the case $k=2$ so that the projections of theses boxes into the coordinate planes are given by squares.
The boxes are called good depending on whether some crossings occur inside and around some of these projected squares as illustrated in Figure \ref{fig:good_boxes}.
Taking the side of the boxes to be large enough, we can guarantee that boxes are good with large probability, so that the percolation processes induced by good boxes in the renormalized lattice dominates a supercritical site percolation process.
This implies the existence of arbitrarily long paths of good boxes.
The directed nature of the dependencies introduces some complications in showing stochastic domination, and in order to gain some kind of independence we will need to look at oriented portions of the renormalize lattice as the one illustrated in Figure \ref{fig:wall_boxes}.

Before we proceed, we recall a classical fact about crossing events.
\begin{remark}
\label{remark correlation length} 
Let $p>p_c(\ZZ^2)$.
If $c>0$ is large enough (depending on $p$) then
\begin{equation}
\label{e:correlation_length}
\lim_{N\to\infty}\PP_p( \mathcal{BT}([0,\lfloor c\log N \rfloor]\times[0,N]\cap \mathbb{Z}^2))=1,
\end{equation}
where and $\lfloor x\rfloor$ denotes the integer part of a real number $x$.
\end{remark}

From now on, given $y\in\ZZ^n$, we use the notation
\[
B(y;N)=:[y_1,y_1+N-1]\times\ldots\times[y_n,y_n+N-1]\cap\ZZ^n.
\] 
Furthermore, we define:
\begin{align*}
B_j(y;N):=\pi_{I_j}\big{(}B(y;N) \cup B(y+Ne_1;N) \cup B(y+Ne_j;N)\big{)}.
\end{align*}
In the next definition, we still stick with the convention that the first coordinate $x_1$ measures the height of each $B_j(y;N)$:
\begin{definition}
\label{definicao caixa boa}
Given $\omega_{I_j} \in \{0,1\}^{\mathbb{Z}^k_{I_j}}$, we say $B_j(y;N)$ is $\omega_{I_j}$-good when 
\[
\omega_{I_j} \in \mathcal{BT}(\pi_{I_j} (B(y;N)\cup B(y+Ne_1;N))) \cap \mathcal{LR}(\pi_{I_j} (B(y;N)\cup B(y+Ne_j;N))).
\]
Furthermore, given $(\omega_{I_j})_{j=2}^n$, we say that $B(y;N)$ is good if for all $j=2,\ldots,n$, $B_j(y;N)$ is $\omega_{I_j}$-good (see Figure \ref{fig:good_boxes}).
\end{definition}

\begin{figure}[htb!]
\centering
		\begin{tikzpicture}[scale=.4]
	
		\foreach \x in {1,...,2}{
				 \draw[thin, fill=black!15!] (0,-2*\x-.60*\x)--(1,-2*\x-.60*\x+.60)--(2,-2*\x-.60*\x)--(1,-2*\x-.60*\x-.60)--(0,-2*\x-.60*\x);

				 \draw[thin, fill=black!40!] (0,-2*\x-.60*\x)--(0,-2*\x-.60*\x-1.40)--(1,-2*\x-.60*\x-2)--(1,-2*\x-.60*\x-.60)--(0,-2*\x-.60*\x);
				  \draw[thin, fill=black!15!] (1,-2*\x-.60*\x-.60) -- ++ (0,-1.4) -- ++ (1,.6) -- ++ (0,1.4) -- ++ (-1,-.6);
				 }
				 
				  \foreach \x in {2,...,2}{
				 \draw[thin, fill=black!15!] (-1,-2*\x-.60*\x+.60)--(0,-2*\x-.60*\x+.60+.60)--(1,-2*\x-.60*\x+.60)--(0,-2*\x-.60*\x-.60+.60)--(-1,-2*\x-.60*\x+.60);
				 \draw[thin, fill=black!40!] (-1,-2*\x-.60*\x+.60)--(-1,-2*\x-.60*\x-1.40+.60)--(0,-2*\x-.60*\x-2+.60)--(0,-2*\x-.60*\x-.60+.60)--(-1,-2*\x-.60*\x+.60);
				 }
				 
				 \foreach \x in {1,...,1}{
				 \draw[thin, fill=black!15!] (-1,-2*\x-.60*\x-.60)--(0,-2*\x-.60*\x)--(1,-2*\x-.60*\x-.60)--(0,-2*\x-.60*\x-1.20)--(-1,-2*\x-.60*\x-.60);

				 \draw[thin, fill=black!40!] (-1,-2*\x-.60*\x-.60)--(-1,-2*\x-.60*\x-2)--(0,-2*\x-.60*\x-2.60)--(0,-2*\x-.60*\x-1.20)--(-1,-2*\x-.60*\x-.60);
				 
				  \draw[thin, fill=black!15!] (0,-2*\x-.60*\x-1.2) -- ++ (0,-1.4) -- ++ (1,.6) -- ++ (0,1.4) -- ++ (-1,-.6);
				 }
				 
				 \foreach \x in {3,...,3}{
				 \draw[thin] (1-\x, -15+2*\x) -- ++ (-1,.6) -- ++ (0,1.4) -- ++(1, -.6) -- ++(0,-1.4);
				  \draw[thick] (1-\x-.25, -15+2*\x+0.15) to [out=90, in=300] ++ (-.25, 0.15+1.4) to [out=120, in=270] ++ (-.25, 0.15+1.4);
				  \draw[thick] (1-\x, -15+2*\x+1.05) to [out=140, in=30] ++ (-1,-.35+.6)  to [out=210, in=320] ++ (-1,-.35+.6);
				 \draw[thin] (-\x, -14.4+2*\x) -- ++ (-1,.6) -- ++ (0,1.4) -- ++(1, -.6) -- ++(0,-1.4);
				 \draw[thin] (1-\x, -15+2*\x+1.4) -- ++ (-1,.6) -- ++ (0,1.4) -- ++(1, -.6) -- ++(0,-1.4);
				 }
				 
				 \foreach \x in {3,...,3}{
				  \draw[thick, shift={(-1 cm,.6 cm)}] (1-\x-.25, -15+2*\x+0.15) to [out=90, in=300] ++ (-.25, 0.15+1.4) to [out=120, in=270] ++ (-.25, 0.15+1.4);
				  \draw[thick, shift={(-1 cm,.6 cm)}] (1-\x, -15+2*\x+1.05) to [out=140, in=30] ++ (-1,-.35+.6)  to [out=210, in=320] ++ (-1,-.35+.6);
				 \draw[thin, shift={(-1 cm,.6 cm)}] (1-\x, -15+2*\x) -- ++ (-1,.6) -- ++ (0,1.4) -- ++(1, -.6) -- ++(0,-1.4);	 
				 \draw[thin, shift={(-1 cm,.6 cm)}] (-\x, -14.4+2*\x) -- ++ (-1,.6) -- ++ (0,1.4) -- ++(1, -.6) -- ++(0,-1.4);
				 \draw[thin, shift={(-1 cm,.6 cm)}] (1-\x, -15+2*\x+1.4) -- ++ (-1,.6) -- ++ (0,1.4) -- ++(1, -.6) -- ++(0,-1.4);
				 }
				 
				 \foreach \x in {3,...,3}{
				 \draw[thick, shift={(-1 cm, 2 cm)}] (1-\x-.25, -15+2*\x+0.15) to [out=90, in=300] ++ (-.25, 0.15+1.4) to [out=120, in=270] ++ (-.25, 0.15+1.4);
				  \draw[thick, shift={(-1 cm, 2 cm)}] (1-\x, -15+2*\x+1.05) to [out=140, in=30] ++ (-1,-.35+.6)  to [out=210, in=320] ++ (-1,-.35+.6);
				  \draw[thin, shift={(-1 cm,2 cm)}] (1-\x, -15+2*\x) -- ++ (-1,.6) -- ++ (0,1.4) -- ++(1, -.6) -- ++(0,-1.4);			 
				 \draw[thin, shift={(-1 cm, 2 cm)}] (-\x, -14.4+2*\x) -- ++ (-1,.6) -- ++ (0,1.4) -- ++(1, -.6) -- ++(0,-1.4);
				 \draw[thin, shift={(-1 cm,2 cm)}] (1-\x, -15+2*\x+1.4) -- ++ (-1,.6) -- ++ (0,1.4) -- ++(1, -.6) -- ++(0,-1.4);
				 }

				  \foreach \x in {3,...,3}{
				 \draw[thin] (1+\x, -15+2*\x) -- ++ (1,.6) -- ++ (0,1.4) -- ++(-1, -.6) -- ++(0,-1.4);
				 \draw[thick] (1+\x + .25, -15+2*\x+.15) to [out=90, in=240] ++ (.25, 0.15+1.4) to [out=60, in=270] ++ (.25, 0.15+1.4);
				 \draw[thick] (1+\x, -15+2*\x+1.05) to [out=40, in=150] ++ (1,-.35+.6)  to [out=330, in=220] ++ (1,-.35+.6);
				 \draw[thin] (2+\x, -14.4+2*\x) -- ++ (1,.6) -- ++ (0,1.4) -- ++(-1, -.6) -- ++(0,-1.4);
				 \draw[thin] (1+\x, -15+2*\x+1.4) -- ++ (1,.6) -- ++ (0,1.4) -- ++(-1, -.6) -- ++(0,-1.4);
				 }
				 
				 \foreach \x in {3,...,3}{
				  \draw[thick, shift={(0 cm, 1.4 cm)}] (1+\x + .25, -15+2*\x+.15) to [out=90, in=240] ++ (.25, 0.15+1.4) to [out=60, in=270] ++ (.25, 0.15+1.4);
				 \draw[thick, shift={(0 cm, 1.4 cm)}] (1+\x, -15+2*\x+1.05) to [out=40, in=150] ++ (1,-.35+.6)  to [out=330, in=220] ++ (1,-.35+.6);
				 \draw[thin, shift={(0 cm, 1.4 cm)}] (1+\x, -15+2*\x) -- ++ (1,.6) -- ++ (0,1.4) -- ++(-1, -.6) -- ++(0,-1.4);
				 \draw[thin, shift={(0 cm, 1.4 cm)}] (2+\x, -14.4+2*\x) -- ++ (1,.6) -- ++ (0,1.4) -- ++(-1, -.6) -- ++(0,-1.4);
				 \draw[thin, shift={(0 cm, 1.4 cm)}] (1+\x, -15+2*\x+1.4) -- ++ (1,.6) -- ++ (0,1.4) -- ++(-1, -.6) -- ++(0,-1.4);
				 }
				 
				 \foreach \x in {3,...,3}{
				  \draw[thick, shift={(1 cm, 2 cm)}] (1+\x + .25, -15+2*\x+.15) to [out=90, in=240] ++ (.25, 0.15+1.4) to [out=60, in=270] ++ (.25, 0.15+1.4);
				 \draw[thick, shift={(1 cm, 2 cm)}] (1+\x, -15+2*\x+1.05) to [out=40, in=150] ++ (1,-.35+.6)  to [out=330, in=220] ++ (1,-.35+.6);

				 \draw[thin, shift={(1 cm, 2 cm)}] (1+\x, -15+2*\x) -- ++ (1,.6) -- ++ (0,1.4) -- ++(-1, -.6) -- ++(0,-1.4); 
				 \draw[thin, shift={(1 cm, 2 cm)}] (2+\x, -14.4+2*\x) -- ++ (1,.6) -- ++ (0,1.4) -- ++(-1, -.6) -- ++(0,-1.4);
				 \draw[thin, shift={(1 cm, 2 cm)}] (1+\x, -15+2*\x+1.4) -- ++ (1,.6) -- ++ (0,1.4) -- ++(-1, -.6) -- ++(0,-1.4);
				 }

\draw[->] (1,-15) -- ++(-10,6);
\node[below] at (-9,-9) {$e_2$};
\draw[->] (1,-15) -- ++(10,6);
\node[below] at (11,-9) {$e_3$};
\draw[->] (12,-7.8)--(12,-2.8);
\node[right] at (12,-2.8) {$e_1$};

\draw[dotted] (1,-7.2) -- ++ (-3,-1.8);
\draw[dotted] (0,-6.6) -- ++ (-3,-1.8);
\draw[dotted] (-1,-6.0) -- ++ (-3,-1.8);
\draw[dotted] (0,-3.8) -- ++ (-3,-1.8);
\draw[dotted] (-1,-3.2) -- ++ (-3,-1.8);

\draw[dotted] (1,-7.2) -- ++ (3,-1.8);
\draw[dotted] (2,-6.6) -- ++ (3,-1.8);
\draw[dotted] (0,-3.8) -- ++ (4,-2.4);
\draw[dotted] (2,-2.6) -- ++ (4,-2.4);
\draw[dotted] (2,-4.0) -- ++ (4,-2.4);

\draw[dotted] (-2,-9) -- ++ (0,-4.2);
\draw[dotted] (-5,-7.2) -- ++ (0,-4.2);
\draw[dotted] (4,-9) -- ++ (0,-4.2);
\draw[dotted] (6,-7.8) -- ++ (0,-4.2);

	\end{tikzpicture}
	\caption{The projection into the subspace spanned by $e_1$, $e_2$ and $e_3$ of a set of four adjacent boxes that are $\omega_{I_2}$-good and $\omega_{I_3}$-good.}	
	\label{fig:good_boxes}
\end{figure}
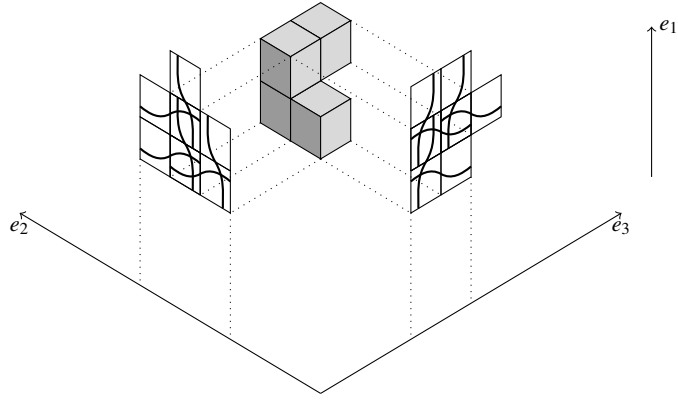

For fixed $N\in\NN$ we say that $\gamma:\NN \to\ZZ^n$ is a path of good boxes if
\begin{equation}
\label{eq:path_good_boxes}
\begin{split}
\text{for each $t\in\NN$, $B(\gamma(t);N)$ is a good box and}\\ 
\text{ $\gamma(t+1)=\gamma(t)+Ne_{j_t}$ for some $j_t\in[n]$.}
\end{split}
\end{equation}
In what follows we will make use of the following lemma whose proof can be done following exactly the same lines as in \cite[Lemmas 4.10 and 4.11]{HilSid}.
The idea is to iterate the use of Proposition \ref{lemma dos caminhos} to pass from one good box to the next one following a path contained inside these boxes whose sites are $\omega_{I_j}$-open for each $2\leq j \leq n$ (as done in \cite[Lemma 4.10]{HilSid}).
This is possible because the definition of good boxes entails the existence of a system of crossings inside the projections of these boxes into the respective $\mathbb{Z}^2_I$ for which Proposition  \ref{lemma dos caminhos} apply as shown in Figure \ref{fig:good_boxes}.
Being able to pass from one good box to the next adjacent one, all we need to do is to  concatenate the paths in order to obtain a path starting in the first good box in the sequence and ending at the last one (as done in \cite[Lemma 4.11]{HilSid}).

\begin{lemma}
\label{l:path_of_good} 
Let $N\in\NN$ and $\gamma:\NN\to\ZZ^n$ be a path of good boxes. 
Then for each $t\in\NN$, there exists a path of sites $\{x_0,\ldots,x_T\}\subset \cup_{s=0}^t B(\gamma(s);N)$ that are $\omega_{I_j}$-open for each $2\leq j \leq n$, and with $x_0\in B(\gamma(0);N)$ and $x_T\in B(\gamma(t);N)$. 
\end{lemma}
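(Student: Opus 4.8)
The argument proceeds by induction on $t$. For $t=0$ the claim follows directly from Definition \ref{definicao caixa boa}: the box $B(\gamma(0);N)$ being good means in particular that for each $j$ the projection $\pi_{I_j}(B(\gamma(0);N))$ contains a bottom-to-top and a left-to-right crossing of $\omega_{I_j}$-open sites, and applying Proposition \ref{lemma dos caminhos} to the family of these crossings (say, all bottom-to-top) produces a path inside $B(\gamma(0);N)$ whose sites are $\omega_{I_j}$-open for all $2\le j\le n$.

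For the inductive step, suppose we already have a path $\{x_0,\ldots,x_T\}\subset \bigcup_{s=0}^{t}B(\gamma(s);N)$ of sites that are $\omega_{I_j}$-open for every $2\le j\le n$, with $x_0\in B(\gamma(0);N)$ and $x_T\in B(\gamma(t);N)$. We must connect $x_T$ to some site in $B(\gamma(t+1);N)$ using only sites in $B(\gamma(t);N)\cup B(\gamma(t+1);N)$ that are $\omega_{I_j}$-open for all $j$. Here we use that $\gamma(t+1)=\gamma(t)+Ne_{j_t}$ for some direction $j_t\in[n]$, and that both $B(\gamma(t);N)$ and $B(\gamma(t+1);N)$ are good. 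The key point is that the union $B(\gamma(t);N)\cup B(\gamma(t+1);N)$ is (a translate of) one of the two ``L-shaped'' regions appearing in Definition \ref{definicao caixa boa}: if $j_t=1$ then the two boxes together with $B(\gamma(t);N)$ realize exactly the region $B(y;N)\cup B(y+Ne_1;N)$ across which a bottom-to-top crossing is guaranteed in each $\pi_{I_j}$; if $j_t=j\ne 1$ the relevant region is $B(y;N)\cup B(y+Ne_j;N)$, crossed left-to-right in $\pi_{I_j}$ — and for the other planes $I_i$ with $i\neq j$, projecting collapses the step $Ne_j$, so the two boxes project to the same square and the crossing of $\pi_{I_i}(B(\gamma(t);N))$ already suffices. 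In every case, for each $j$ we obtain a crossing of $\pi_{I_j}(B(\gamma(t);N)\cup B(\gamma(t+1);N))$ composed of $\omega_{I_j}$-open sites that runs ``from the $B(\gamma(t);N)$ side to the $B(\gamma(t+1);N)$ side''. Feeding this system of crossings into Proposition \ref{lemma dos caminhos} (reorienting coordinates so that the crossing direction plays the role of the first coordinate) yields a single path inside $B(\gamma(t);N)\cup B(\gamma(t+1);N)$, with $\omega_{I_j}$-open sites for all $j$, that starts in $B(\gamma(t);N)$ and ends in $B(\gamma(t+1);N)$.

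It remains to concatenate. The previously constructed path ends at $x_T\in B(\gamma(t);N)$ and the new path starts somewhere in $B(\gamma(t);N)$ as well; since each box $B_j(\gamma(t);N)$ is $\omega_{I_j}$-good, the crossings in $\pi_{I_j}(B(\gamma(t);N))$ of both types intersect (a bottom-to-top and a left-to-right crossing of a square must cross), so one can join the endpoint of the old path to the start of the new one by a further path inside $B(\gamma(t);N)$ with the required openness property — this is the same gluing step as in \cite[Lemma 4.11]{HilSid}. Discarding any repeated vertices to make the concatenation a genuine (self-avoiding) path gives the desired path in $\bigcup_{s=0}^{t+1}B(\gamma(s);N)$, completing the induction.

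**Main obstacle.** The only real subtlety, and the place where the higher-dimensional setting differs from \cite{HilSid}, is organizing the ``system of crossings'' so that Proposition \ref{lemma dos caminhos} actually applies to pass from $B(\gamma(t);N)$ to $B(\gamma(t+1);N)$: one must check that the $n-1$ crossings in the planes $\pi_{I_j}$ are mutually compatible, i.e.\ that for the $n-2$ directions $i\neq j_t$ the two boxes genuinely project onto the same square (so no new crossing is needed there), while in the single direction $j_t$ the L-shaped region is crossed in the sense matching the hypotheses of Proposition \ref{lemma dos caminhos}. Once this bookkeeping is set up correctly, everything else — the base case, the inner gluing within a good box, and the removal of repeated vertices — is routine and parallels \cite[Lemmas 4.10 and 4.11]{HilSid}.
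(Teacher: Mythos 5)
Your overall strategy (iterate Proposition \ref{lemma dos caminhos} to pass from one good box to the next and then concatenate, as in \cite[Lemmas 4.10 and 4.11]{HilSid}) is the intended one, and your vertical step ($j_t=1$) is fine. The genuine gap is in the horizontal step $j_t=j\neq 1$. You propose to feed into Proposition \ref{lemma dos caminhos} the left-to-right crossing of $\pi_{I_j}(B(\gamma(t);N)\cup B(\gamma(t)+Ne_j;N))$, ``reorienting coordinates so that the crossing direction plays the role of the first coordinate,'' and to use no new crossing in the other planes because the step $Ne_j$ collapses there. This cannot work: the proposition requires, in \emph{every} plane $I_i$, a crossing that spans the coordinate common to all the planes $I_2,\dots,I_n$, and that common coordinate is $1$ and nothing else ($e_j$ lies in $I_j$ only). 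There is no reorientation under which the hypotheses are met, and the issue is not mere bookkeeping: when the path follows the vertical wiggles of the LR crossing in $\pi_{I_j}$, its projections in the other planes must make the same vertical moves while staying on $\omega_{I_i}$-open sites, and this is exactly the synchronization that the random-walk lemma provides only when all planes carry a crossing in the shared direction. The correct route (this is what the definition of good boxes and Figure \ref{fig:good_boxes} are set up for) keeps coordinate $1$ as the height: inside $\pi_{I_j}$, the bottom-to-top crossing over $B(\gamma(t);N)$ and the left-to-right crossing intersect by planarity, as do the left-to-right crossing and the bottom-to-top crossing over $B(\gamma(t+1);N)$; concatenating these three pieces yields a single $\omega_{I_j}$-open path that crosses the enlarged region in the first coordinate while migrating from the column of $\gamma(t)$ to that of $\gamma(t+1)$. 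Feeding this concatenated crossing, together with the bottom-to-top crossings of the two-box columns in the other planes, into Proposition \ref{lemma dos caminhos} (applied to the box of side $2N$ in directions $1$ and $j$ and $N$ elsewhere) produces the desired connecting path visiting both boxes.

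A second, smaller gap is the gluing inside $B(\gamma(t);N)$: the fact that a bottom-to-top and a left-to-right crossing of a square intersect is a statement in each plane $I_i$ separately, and it does not by itself produce a path in $\ZZ^n$, simultaneously $\omega_{I_i}$-open for all $i$, joining the endpoint $x_T$ of the old path to the starting site of the new one. The endpoints of a path delivered by Proposition \ref{lemma dos caminhos} are pinned down only through the prescribed endpoints of the crossings (which determine all coordinates), so the concatenation must be organized as in \cite[Lemma 4.11]{HilSid}: build the successive connecting paths from a common, consistently chosen system of crossings for each box, so that consecutive pieces share projections (hence endpoints) on which the synchronization argument can again be applied, rather than invoking the planar intersection alone.
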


Let $p_0=o$ and $r_0=2$.
For $N\in\NN$ and $t\in\ZZ$, define recursively
\begin{equation}
\label{eq:pt}
p_t=p_{t-1}+N e_{r_t},
\end{equation}
 where $r_t\in\{2,3\ldots,n\}$ is such that $t\equiv r_t-2 \mod (n-1)$.
Roughly speaking, as $t$ increases the values of $r_t$ run through the set $\{2,3,\ldots, n\}$ cyclically. 
As an example, when $n=4$ we have $r_0=2, r_1=3, r_2=4, r_3=2, r_4=3, r_5=4,$ and so on.
As for the points $p_t$, they form a directed sequence whose increments are segments of length $N$, each oriented along one of the directions in $\{e_2,e_3,\ldots, e_n\}$.
The orientation of these segments follow the same cyclic pattern as $r_t$.

Let $\nu =(\nu(t,x))_{(t,x) \in\mathbb{Z}^2}$ be the random element in $\{0,1\}^{\ZZ^2}$ defined as
\begin{equation}
\label{e:def_nu}
\nu(t,x):=\ind [B(p_t+Nx e_1;N)\mbox{ is good}].
\end{equation}

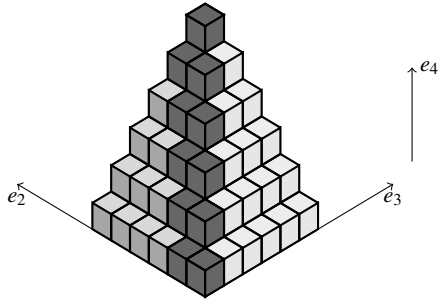
\begin{figure}[htb!]
	\centering
	\begin{tikzpicture}[scale=.25]
	
   				\foreach \x in {0,...,5}
    			{\foreach \i in {0,...,\x}
    			 {\pgfmathparse{.60*\i}
    			    \xdef\y{\pgfmathresult}
    	 		  \draw[fill=black!7!] (\x-\i+2,-2*\x-\y)--(\x-\i+1,-2*\x-\y+.60)--(\x-\i,-2*\x-\y)--(\x-\i+1,-2*\x-\y-.60)--(\x-\i+2,-2*\x-\y);
    				  \draw[fill=black!10!](\x-\i+2,-2*\x-\y)--(\x-\i+2,-2*\x-\y-1.40)--(\x-\i+1,-2*\x-\y-2)--(\x-\i+1,-2*\x-\y-.60);
    				  }}
		\foreach \x in {0,...,5}
			{\foreach \i in {0,...,\x}
			 {\pgfmathparse{.60*\i}
			    \xdef\y{\pgfmathresult}
				\draw[fill=black!15!](-\x+\i,-2*\x-\y)--(-\x+\i+1,-2*\x-\y+.60)--(-\x+\i+2,-2*\x-\y)--(-\x+\i+1,-2*\x-\y-.60)--(-\x+\i,-2*\x-\y);
				\draw[fill=black!35!] (-\x+\i,-2*\x-\y)--(-\x+\i,-2*\x-\y-1.40)--(-\x+\i+1,-2*\x-\y-2)--(-\x+\i+1,-2*\x-\y-.60)--(-\x+\i,-2*\x-\y);

				 \draw[fill=black!60!] (0,-2*\x-.60*\x)--(1,-2*\x-.60*\x+.60)--(2,-2*\x-.60*\x)--(1,-2*\x-.60*\x-.60)--(0,-2*\x-.60*\x);

				 \draw[fill=black!60!] (0,-2*\x-.60*\x)--(0,-2*\x-.60*\x-1.40)--(1,-2*\x-.60*\x-2)--(1,-2*\x-.60*\x-.60)--(0,-2*\x-.60*\x);
				 }}
				 
				 \foreach \x in {1,...,5}{
				 \draw[fill=black!60!] (-1,-2*\x-.60*\x+.60)--(0,-2*\x-.60*\x+.60+.60)--(1,-2*\x-.60*\x+.60)--(0,-2*\x-.60*\x-.60+.60)--(-1,-2*\x-.60*\x+.60);
				 \draw[fill=black!60!] (-1,-2*\x-.60*\x+.60)--(-1,-2*\x-.60*\x-1.40+.60)--(0,-2*\x-.60*\x-2+.60)--(0,-2*\x-.60*\x-.60+.60)--(-1,-2*\x-.60*\x+.60);
				 }
				 
		\foreach \x in {0,...,5}
			{\foreach \i in {0,...,\x}
			 {\pgfmathparse{.60*\i}
			    \xdef\y{\pgfmathresult}
			     \draw[thick] (-\x+\i,-2*\x-\y)--(-\x+\i+1,-2*\x-\y+.60)--(-\x+\i+2,-2*\x-\y)--(-\x+\i+1,-2*\x-\y-.60)--(-\x+\i,-2*\x-\y)--(-\x+\i,-2*\x-\y-1.40)--(-\x+\i+1,-2*\x-\y-2)--(-\x+\i+1,-2*\x-\y-.60);
			    	 \draw[fill=black!60!](\x-\x+2,-2*\x-.60*\x)--(\x-\x+2,-2*\x-.60*\x-1.40)--(\x-\x+1,-2*\x-.60*\x-2)--(\x-\x+1,-2*\x-.60*\x-.60);
				 	  
				 \draw[thick] (\x-\i+2,-2*\x-\y)--(\x-\i+1,-2*\x-\y+.60)--(\x-\i,-2*\x-\y)--(\x-\i+1,-2*\x-\y-.60)--(\x-\i+2,-2*\x-\y)--(\x-\i+2,-2*\x-\y-1.40)--(\x-\i+1,-2*\x-\y-2)--(\x-\i+1,-2*\x-\y-.60);
				 }}

\draw[->] (1,-15) -- ++(-10,6);
\node[below] at (-9,-9) {$e_2$};
\draw[->] (1,-15) -- ++(10,6);
\node[below] at (11,-9) {$e_3$};
\draw[->] (12,-7.8)--(12,-2.8);
\node[right] at (12,-2.8) {$e_4$};
	\end{tikzpicture}
	\caption{Let $n=4$ and $k=2$.  
	The darker boxes are the projections into the subspace spanned by $e_2, e_3, e_4$ of boxes $B(p_t+Nxe_1;N)$ for $t=0,\ldots, 15$ and arbitrary $x$ (light gray boxes where added to help visualization).
	 There are only $11$ such boxes that are visible from this perspective which correspond to indices $t$ with $r_t = 2$ or $r_t=4$.
	 The other $5$ boxes corresponding to $r_t=3$ are not visible because they lie behind other  boxes colored in light gray.}
	 \label{fig:spiral_boxes}
\end{figure}

\begin{figure}[htb!]
	\centering
	\begin{tikzpicture}[scale=.25]
		\foreach \x in {0,...,4}{
		\draw[fill=black!25!] (2*\x+1,.6) -- (2*\x+2,1.2) -- (2*\x+3,.6) -- (2*\x+2,0) -- (2*\x+1,.6);	}
		\foreach \x in {0,...,5}{
			\draw[fill=black!25!] (2*\x,0) -- (2*\x+1,.6) -- (2*\x+2,0) -- (2*\x+1,-.6) -- (2*\x,0);
			\foreach \y in {0,...,9}{
			\draw[fill=black!15!] (2*\x,-1.4*\y)--(2*\x,-1.4*\y-1.4)--(2*\x+1,-1.4*\y-2)--(2*\x+1,-1.4*\y-.6)--(2*\x,-1.4*\y);
			\draw[fill=black!40!] (2*\x+1,-1.4*\y-.6)--(2*\x+1,-1.4*\y-2)--(2*\x+2,-1.4*\y-1.4)--(2*\x+2,-1.4*\y);
			\draw[dotted] (11,-14.6) -- ++ (-5,-3);
			\draw[dotted] (12,-6.8) -- ++ (6,-3.6);
			\draw[dotted] (12,-14)--(18,-10.4);
			 \draw[->] (0,-14) -- ++(8,-4.8);
			 \node[below] at (8,-18.8) {$e_3$};
			 \draw[->] (12,-6.8) -- ++(2,1.2);
			 \node[above] at (14,-5.6) {$e_2$};
			 \draw[->] (0,0) -- ++(0,3);
			 \node[left] at (0,3) {$e_1$};
			}}
\end{tikzpicture}
\caption{Let $n=4$ and $k=2$. 
This picture shows the projection into the subspace spanned by $e_1, e_2$ and $e_3$ of the boxes $B(p_t + Nxe_1;N)$ for $t=0,\ldots,15$ and $x=0,\ldots,9$.}
\label{fig:wall_boxes}
\end{figure}
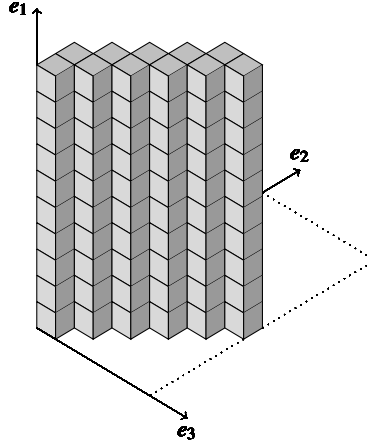

The process $\nu$ can be thought as a percolation process in a renormalized (that is, rescaled) lattice where sites are now boxes of the type $B(p_t+Nxe_1;N)$ (for $(t,x) \in \mathbb{Z}^2$).
The cyclic nature of $r_t$ and the choice of the sequence $p_t$ as in \eqref{eq:pt} allows us to derive some properties of this renormalized lattice. 
On the one hand, the projection into the subspace spanned by $e_2,\ldots e_n$ is given by a spiral  sequence of neighboring boxes which is isomorphic to a line of boxes sharing a face, see Figure \ref{fig:spiral_boxes}.
On the other hand, the projection into the subspace spanned by $e_1, e_i, e_j$ for  $i,j \in \{2,\ldots, n\}$ and $i \neq j$ resembles a jagged wall of boxes as illustrated in Figure \ref{fig:wall_boxes} which is isomorphic to a plane of adjacent boxes.
This specific shape guarantees that, for the process $\nu$, the statuses of distant boxes in this renormalized lattice are independent as they will have disjoint projections into the $\mathbb{Z}^2_{I_j}$ subspaces. 
This is the content of the next lemma:
\begin{lemma}
\label{lemma da cortina chi dependencia} 
For the random element $\nu$ given as in \eqref{e:def_nu}, the variable $\nu(t,x)$ is independent of  $\{\nu(s,y) \colon  |t-s|\geq 2(n-1) \text{ or } |y-x|\geq 2\}$.
In particular, the process $\nu$ is of class $C(2,\chi,p)$ for some $p>0$.
\end{lemma}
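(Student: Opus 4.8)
The plan is to identify, for each $(t,x)\in\ZZ^2$, the collection of underlying i.i.d.\ Bernoulli variables that determine $\nu(t,x)$, and then to verify that the pairs $(s,y)$ singled out in the statement involve collections disjoint from that of $(t,x)$; the asserted independence will then follow at once from the fact that the processes $(\omega_{I_j})_{j=2}^{n}$ are independent and each is a product measure. Concretely, writing $y=p_t+Nxe_1$, Definition \ref{definicao caixa boa} expresses $\{\nu(t,x)=1\}=\{B(y;N)\text{ is good}\}$ as the intersection over $j=2,\dots,n$ of the events $\{B_j(y;N)\text{ is }\omega_{I_j}\text{-good}\}$; each of these is a finite intersection of crossing events $\mathcal{BT}(\cdot)$ and $\mathcal{LR}(\cdot)$ of regions contained in $B_j(y;N)=\pi_{I_j}(B(y;N)\cup B(y+Ne_1;N)\cup B(y+Ne_j;N))\subset\ZZ^2_{I_j}$, and since a crossing of a finite region depends only on the states of its sites, it is measurable with respect to $\omega_{I_j}$ restricted to $B_j(y;N)$. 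As the $\omega_{I_j}$ are independent, $\nu(t,x)$ is measurable with respect to $\sigma(\omega_{I_j}(u)\colon u\in B_j(p_t+Nxe_1;N),\ 2\le j\le n)$.

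Next I would describe these regions explicitly. Using coordinates $(e_1,e_j)$ on $\ZZ^2_{I_j}$, the point $p_t+Nxe_1$ has first coordinate $Nx$ and $j$-th coordinate $(p_t)_j$, and translating $y$ by $Ne_1$ or by $Ne_j$ merely shifts the relevant $N\times N$ block along $e_1$ or along $e_j$; hence
\[
B_j(p_t+Nxe_1;N)\subseteq [Nx,\,Nx+2N-1]\times[(p_t)_j,\,(p_t)_j+2N-1].
\]
Two elementary observations then give disjointness. If $|y-x|\ge 2$, the intervals $[Nx,Nx+2N-1]$ and $[Ny,Ny+2N-1]$ are disjoint, so the two boxes above (for $(t,x)$ and for the other pair) have disjoint first coordinates, hence are disjoint, for \emph{every} $j$. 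If instead $|t-s|\ge 2(n-1)$, then --- because $r_u$ runs cyclically through $\{2,\dots,n\}$ with period $n-1$, so any $2(n-1)$ consecutive indices contain each value at least twice --- one gets $|(p_t)_j-(p_s)_j|\ge 2N$ for every $j\in\{2,\dots,n\}$, and then the two boxes have disjoint $j$-th coordinates, hence are again disjoint for every $j$. In either case $B_j(p_t+Nxe_1;N)\cap B_j(p_s+Nye_1;N)=\varnothing$ for all $j$, and, by the same reasoning, $B_j(p_t+Nxe_1;N)$ avoids $B_j(p_s+Nye_1;N)$ for \emph{all} admissible $(s,y)$ simultaneously.

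Putting these together, $\nu(t,x)$ and the whole family $\{\nu(s,y)\colon |t-s|\ge 2(n-1)\text{ or }|y-x|\ge 2\}$ are functions of disjoint subfamilies of the independent Bernoulli variables $(\omega_{I_j}(u))_{j,u}$, which yields the first assertion. For the second, set $\chi:=2n-1$: if $S\subset\ZZ^2$ satisfies $\inf\{\|a-(t,x)\|\colon a\in S\}\ge\chi$, then each $(s,y)\in S$ has $|t-s|+|y-x|\ge 2n-1$, so $|y-x|\ge 2$ or $|t-s|\ge 2(n-1)$; hence $\nu(t,x)$ is independent of $(\nu(a))_{a\in S}$ and $\PP_{\mathbf{p}}(\nu(t,x)=1\mid(\nu(a))_{a\in S})=\PP_{\mathbf{p}}(\nu(t,x)=1)=:p$. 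By translation invariance of each $\PP_{p_{I_j}}$ together with independence of the $\omega_{I_j}$ one has $p=\PP_{\mathbf{p}}(B(o;N)\text{ is good})$, and this is strictly positive because $p_{I_j}>0$ makes the event that all sites of the finite set $B_j(o;N)$ are $\omega_{I_j}$-open for every $j$ have positive probability, and on this event $B(o;N)$ is good. Hence $\nu$ is of class $C(2,\chi,p)$ with $p>0$.

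The only genuine work is the second step --- the explicit description of $B_j(p_t+Nxe_1;N)$ inside the $(e_1,e_j)$-planes and the two disjointness checks; there is no conceptual obstacle, and the threshold $2(n-1)$ is forced precisely by the need to traverse each of the $n-1$ directions $e_2,\dots,e_n$ twice in order to separate two consecutive $N$-blocks along it.
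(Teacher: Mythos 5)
Your proposal is correct and follows essentially the same route as the paper's proof: you observe that $\nu(t,x)$ is measurable with respect to $\omega_{I_j}$ restricted to $B_j(p_t+Nxe_1;N)$, $2\le j\le n$, and then check that these regions are disjoint from the corresponding ones for $(s,y)$ when $|y-x|\ge 2$ (separation in the first coordinate) or $|t-s|\ge 2(n-1)$ (each coordinate of $p_s-p_t$ other than the first has absolute value at least $2N$, by the cyclic choice of $r_t$). Your explicit bounding-box computation and the choice $\chi=2n-1$ just make precise what the paper leaves implicit, so there is nothing to correct.
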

\begin{proof} Let $(t,x)\in\ZZ^2$ be fixed. 
Since the event that the box $B(y;N)$ is good is measurable with respect to the $\sigma$-algebra generated by the family of random variables
\[
\bigcup_{j=2}^n \{\omega_{I_j}(v):v\in B_j(y;N)\},
\]
all we need to show is that, for each $2\leq j \leq n$, the sets $B_j(p_t+Nxe_1;N)$ and $B_j(p_s+Nye_1;N)$ are disjoint in both cases $|t-s|\geq 2(n-1)$ or $|y-x|\geq 2$.

If $|y-x|\geq 2$ we have that for each $2\leq j\leq n$ the distance between $B_j(p_t+Nxe_1;N)$ and $B_j(p_s+Nye_1;N)$ is at least $1$, and this is also true in the case that $|t-s|\geq 2(n-1)$, since, in this case,  we have that except for the first coordinate, each coordinate of $p_s-p_t$ has absolute value at least $2N$. 
\qed
\end{proof}

Once the range of dependency is controlled for the process $\nu$, we use Theorem \ref{remark Ligget} in order to have it dominated from below by a supercritical Bernoulli process:

\begin{lemma}
\label{lemma dominacao estocastica} 
Assume that for each $2\leq j\leq n$ we have $p_{I_j}>p_c(\ZZ^2)$. 
Then for every $\epsilon>0$ there exists $N = N(\epsilon) \in\NN$ such that the process $\nu = (\nu(t,x))_{(t,x)\in \mathbb{Z}^2}$ (cf.\ \eqref{e:def_nu}) dominates stochastically a standard Bernoulli site percolation process in $\ZZ^2$ with parameter $p>1-\epsilon$.
\end{lemma}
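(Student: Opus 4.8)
The plan is to reduce the statement to an application of Theorem~\ref{remark Ligget}. By Lemma~\ref{lemma da cortina chi dependencia} the process $\nu$ has a range of dependence that does not grow with $N$, so $\nu$ will be of class $C(2,\chi,q_N)$ where $\chi$ is a fixed integer and $q_N$ is the (common) value of $\PP_{\mathbf p}(\nu(t,x)=1)$; the only missing ingredient is then that $q_N\to1$ as $N\to\infty$, after which one simply picks $N$ large enough to exceed the Liggett--Schonmann--Stacey threshold for a prescribed target density.

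First I would check that $\PP_{\mathbf p}(\nu(t,x)=1)$ is independent of $(t,x)$. Each box $B(p_t+Nxe_1;N)$ is the translate of $B(o;N)$ by a vector all of whose coordinates are multiples of $N$, and by Definition~\ref{definicao caixa boa} the event that a box is good is a function of the fields $(\omega_{I_j})_{j=2}^n$, which are jointly translation invariant; hence $q_N:=\PP_{\mathbf p}(\nu(t,x)=1)=\PP_{\mathbf p}(B(o;N)\text{ is good})$ is well defined. Moreover $B(o;N)$ is good exactly when $B_j(o;N)$ is $\omega_{I_j}$-good for every $2\le j\le n$, and since these $n-1$ events depend on the independent fields $\omega_{I_2},\dots,\omega_{I_n}$ respectively,
\[
q_N=\prod_{j=2}^{n}\PP_{p_{I_j}}\big(B_j(o;N)\text{ is }\omega_{I_j}\text{-good}\big).
\]

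Next I would show that each factor tends to $1$. By Definition~\ref{definicao caixa boa}, for fixed $j$ the event that $B_j(o;N)$ is $\omega_{I_j}$-good is the intersection of a bottom-to-top and a left-to-right crossing event, each being the crossing of a rectangle of $\ZZ^2_{I_j}$ whose side in the crossing direction has length $2N$ and whose other side has length $N$; thus it is a crossing ``the hard way'' of a $1:2$ rectangle. Its probability nonetheless tends to $1$: for $N$ large that rectangle contains a sub-rectangle of dimensions $\lfloor c\log N\rfloor\times 2N$, and any crossing of the latter along its long side is automatically a crossing of the former, so $p_{I_j}>p_c(\ZZ^2)$ and Remark~\ref{remark correlation length} (applied with $N$ replaced by $2N$) force the probability of such a crossing to tend to $1$. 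A union bound over the two crossing events then gives $\PP_{p_{I_j}}(B_j(o;N)\text{ is }\omega_{I_j}\text{-good})\to1$, whence $q_N\to1$. This is the step I expect to require the most care: one must keep track of the various conventions in order to identify correctly which side of each projected box is being crossed, so that the comparison with a thin rectangle is legitimate.

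Finally, by Lemma~\ref{lemma da cortina chi dependencia} the variable $\nu(t,x)$ is independent of $\{\nu(s,y)\colon\|(s,y)-(t,x)\|\ge 2n-1\}$, so with $\chi:=2n-1$ (which does not depend on $N$) the conditional probability $\PP_{\mathbf p}(\nu(t,x)=1\mid(\nu(a))_{a\in S})$ equals $q_N$ whenever $\inf_{a\in S}\|a-(t,x)\|\ge\chi$; hence $\nu$ is of class $C(2,\chi,q_N)$. Given $\epsilon>0$, fix $\rho\in(1-\epsilon,1)$ and let $p_0=p_0(\rho,\chi)<1$ be the threshold furnished by Theorem~\ref{remark Ligget}; by the previous paragraph choose $N=N(\epsilon)$ so large that $q_N>p_0$. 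Theorem~\ref{remark Ligget} then yields that $\nu$ dominates stochastically an i.i.d.\ Bernoulli site percolation on $\ZZ^2$ of parameter $\rho>1-\epsilon$, which is the assertion.
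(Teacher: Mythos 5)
Your proposal is correct and follows essentially the same route as the paper's proof: reduce to Theorem~\ref{remark Ligget} via Lemma~\ref{lemma da cortina chi dependencia}, use translation invariance and the independence of the fields $\omega_{I_2},\dots,\omega_{I_n}$ to factor $\PP_{\mathbf p}(\nu(0,0)=1)$ over $j$, and show each hard-way crossing probability of the $N\times 2N$ projected rectangles tends to $1$ using $p_{I_j}>p_c(\ZZ^2)$ and Remark~\ref{remark correlation length}. The only cosmetic differences are that you bound the probability of the intersection of the two crossing events by a union bound where the paper invokes FKG, and you spell out the thin-subrectangle comparison (with width $\lfloor c\log(2N)\rfloor$ rather than $\lfloor c\log N\rfloor$, an immaterial adjustment) that the paper simply cites as a classical supercritical crossing estimate.
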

\begin{proof} We write $\mathbb{P} = \otimes_{I \in \mathcal{I}(k;n)} \mathbb{P}_{p_I} $. 
Since for all $(t,x)\in\ZZ^2$ we have that $\PP(\nu(t,x)=1)=\PP(\nu(0,0)=1)$, in view of Theorem \ref{remark Ligget} and the previous Lemma, we only need to show that $\PP(\nu(0,0)=1)$ can be made arbitrarily close to $1$ for suitable large $N$.
Recall that the box $B(o;N)$ is good if for each $2\leq j\leq n$, $B_j(o;N)$ is $\omega_{I_j}$-good. 
Observe that for $j\neq l$, the events $[B_j(o;N) \text{ is $\omega_{I_j}$-good}]$ and $[B_l(o;N) \text{ is $\omega_{I_l}$-good}]$ are independent. 
Hence, all we need to show is that the assumption $p_{I_j}>p_c(\ZZ^2)$ implies that the probability that each $B_j(o;N)$ is $\omega_{I_j}$-good can be made arbitrarily close to $1$ for each $j$, for a suitable  choice of a large non negative integer $N$, which may depend on $p_j$. By the FKG inequality, we have:
\begin{align*}
\PP(B_j(o;N) \mbox{ is } \omega_{I_j}\mbox{-good }) \geq\,  & \PP_{p_{I_j}}\big(\mathcal{BT}(\pi_{I_j} (B(y;N)\cup B(y+Ne_1;N) ) \big) \times\\
                                                             &   \PP_{p_{I_j}} \big(\mathcal{LR}(\pi_{I_j} (B(y;N)\cup B(y+Ne_j;N)) \big).
\end{align*}
The fact that each probability in the right-hand side above can be made arbitrarily close to $1$ by choosing $N$ sufficiently large follows from the fact that $p_{I_j} > p_c(\mathbb{Z}^2)$ together with classical crossing probability estimates for supercritical Bernoulli site percolation (for instance Eq.\ \eqref{e:correlation_length} in Remark \ref{remark correlation length} is sufficient). 
\qed
\end{proof}

\subsection{Proof of Theorems \ref{t:poly_dec_1} and \ref{theorem power law projetando em k}.}
We start this section presenting the proof for Theorem \ref{t:poly_dec_1}.
Roughly speaking, it consists of three steps.
First find a path spanning a rectangle in the renormalized lattice that is very elongated in the vertical direction.
This path can be mapped to a path of good boxes in the original lattice.
Lemma \ref{l:path_of_good} allows to obtain a long path of sites in the original lattice that are $\omega_{I_j}$ open for every $j=2,\ldots,n$.
Comparison with a supercritical percolation process in the renormalized lattice, shows that this step can be accomplished paying only a constant probability cost.
The second step consist of guarantying that the sites in this long path are also $\omega_I$ open for every index $I$ that does not contain the coordinate $1$.
The geometry of our construction allows to accomplish this step by paying only a polynomial path in the length of the path.
An extra polynomial probability cost needs to be payed in order to require that the long path starts at the origin.
The third and last step consists in guarantying that the origin does not belong to an infinite connected component.
This accomplished by construction a closed set surrounding the origin much in the spirit of Lemma \ref{lemma nao perc} .
Again only polynomial probability cost is necessary to accomplish this step.

At the end of this section we indicate the modifications that need to be performed in the proof in order to obtain a proof of Theorem  \ref{theorem power law projetando em k}.

\begin{proof}[Proof of Theorem \ref{t:poly_dec_1}] 
Let $\epsilon>0$ be such that $1-\epsilon>p_c(\ZZ^2)$ and $\{\eta(x)\}_{x\in\ZZ^2}$ be a Bernoulli site percolation process on $\mathbb{Z}^2$ with parameter $1-\epsilon$, hence supercritical.
Fix $N\in\NN$ (depending on $\epsilon$) large enough so that the claim in Lemma \ref{lemma dominacao estocastica} holds, i.e., the process $\{\nu(x)\}_{x\in\ZZ^2}$ defined in \eqref{e:def_nu} dominates $\{\eta(x)\}_{x\in\ZZ^2}$ stochastically.
Let 
\[
\mathcal{O}_1=\big\{\nu\in \mathcal{BT}([0,\lfloor c_o\log K \rfloor] \times [0,K]\cap\ZZ^2)\big\}.
\] 
In view of Remark \ref{remark correlation length}, we can choose a large constant $c_o>0$ (depending on $\epsilon$) such that
\begin{equation}
\label{equation caminho de caixas boas}
\liminf_{K\to\infty} \PP\big(\mathcal{O}_1) \geq \liminf_{K\to\infty} \PP\big(\eta \in  \mathcal{BT}([0,\lfloor c_o\log K \rfloor] \times [0,K]\cap\ZZ^2)>0.
\end{equation}
The value of $c_o$ will be kept fixed from now on.

\begin{center}
\begin{figure}
\centering
	\begin{tikzpicture}[scale=.25]
		\foreach \x in {0,...,4}{
		\draw[fill=black!25!] (2*\x+1,.6) -- (2*\x+2,1.2) -- (2*\x+3,.6) -- (2*\x+2,0) -- (2*\x+1,.6);	}
		\foreach \x in {0,...,5}{
			\draw[fill=black!25!] (2*\x,0) -- (2*\x+1,.6) -- (2*\x+2,0) -- (2*\x+1,-.6) -- (2*\x,0);
			\foreach \y in {0,...,9}{
			\draw[fill=black!15!] (2*\x,-1.4*\y)--(2*\x,-1.4*\y-1.4)--(2*\x+1,-1.4*\y-2)--(2*\x+1,-1.4*\y-.6)--(2*\x,-1.4*\y);
			\draw[fill=black!40!] (2*\x+1,-1.4*\y-.6)--(2*\x+1,-1.4*\y-2)--(2*\x+2,-1.4*\y-1.4)--(2*\x+2,-1.4*\y);
			\draw[dotted] (11,-14.6) -- ++ (-5,-3);
			\draw[dotted] (12,-6.8) -- ++ (6,-3.6);
			\draw[dotted] (12,-14)--(18,-10.4);
			 \draw[->] (0,-14) -- ++(8,-4.8);
			 \node[below] at (8,-18.8) {$e_3$};
			 \draw[->] (12,-6.8) -- ++(2,1.2);
			 \node[above] at (14,-5.6) {$e_2$};
			 \draw[->] (0,0) -- ++(0,3);
			 \node[left] at (0,3) {$e_1$};
			}}
\end{tikzpicture}
\hspace{1cm}
\begin{tikzpicture}[scale=.25]
	
		\draw[dotted,fill=black!5] (6, -7) --++ (1,-.6) --++ (1, .6) --++ (-1,.6) --++ (-1,-.6);
		\draw[fill=black!15!] (6,-7) --++ (0,-1.4) --++ (1,-.6) -- ++ (0,1.4) -- ++ (-1,.6);
		\draw[fill=black!15!] (6,-8.4) --++ (0,-1.4) --++ (1,-.6) -- ++ (0,1.4) -- ++ (-1,.6);
		
		\draw[dotted,fill=black!5] (7, -6.4) --++ (1,-.6) --++ (1, .6) --++ (-1,.6) --++ (-1,-.6);
		\draw[fill=black!40!] (7,-7.6) --++ (0,-1.4) --++ (1,.6) --++ (0,1.4) --++ (-1,-.6);
		\draw[fill=black!40!] (7,-9) --++ (0,-1.4) --++ (1,.6) --++ (0,1.4) --++ (-1,-.6);
			
		\draw[dotted, fill=black!20!] (8,-7) --++ (0,-1.4) --++ (1,.6) --++ (0,1.4) --++ (-1,-.6);
		\draw[dotted,fill=black!5] (8, -7) --++ (1,-.6) --++ (1, .6) --++ (-1,.6) --++ (-1,-.6);
		\draw[fill=black!15!] (8,-7) --++ (0,-1.4) --++ (1,-.6) -- ++ (0,1.4) -- ++ (-1,.6);
		\draw[fill=black!40!] (9,-7.6) --++ (0,-1.4) --++ (1,.6) --++ (0,1.4) --++ (-1,-.6);

		\foreach \x in {0,...,4}{
		\draw[fill=black!0!] (2*\x+1,.6) -- (2*\x+2,1.2) -- (2*\x+3,.6) -- (2*\x+2,0) -- (2*\x+1,.6);	
		}
		\foreach \x in {0,...,5}{
			\draw[fill=black!0!] (2*\x,0) -- (2*\x+1,.6) -- (2*\x+2,0) -- (2*\x+1,-.6) -- (2*\x,0);
			}
			
		\foreach \x in {0,...,5}{
			\draw[fill=black!0!] (2*\x+2,-14) -- (2*\x+1,-14.6) -- (2*\x,-14);
			\draw (2*\x,0) -- (2*\x,-14);
			\draw(12,0) -- (12,-14);
			\draw (2*\x+1,-.6)--(2*\x+1,-14.6);

			}
			\draw[dotted] (11,-14.6) -- ++ (-5,-3);
			\draw[dotted] (12,-6.8) -- ++ (6,-3.6);
			\draw[dotted] (12,-14)--(18,-10.4);
			 \draw[->] (0,-14) -- ++(8,-4.8);
			 \node[below] at (8,-18.8) {$e_3$};
			 \draw[->] (12,-6.8) -- ++(2,1.2);
			 \node[above] at (14,-5.6) {$e_2$};
			 \draw[->] (0,0) -- ++(0,3);
			 \node[left] at (0,3) {$e_1$};
\end{tikzpicture}
\caption{Let $n=4$ and $k=2$. On the left: The projection of the set $\mathcal{Z}^2(K;N)$ into the subspace spanned by $e_1$, $e_2$ and $e_3$ in a situation where $K=9$ and $\lfloor c_o \log{K} \rfloor = 15$ (only $10$ zig-zag steps appear, instead of $15$ because indices $t$ for which $r_t=4$ lead to steps towards a fourth dimension).
On the right: the union of the boxes corresponding to a piece of a path of boxes inside $\mathcal{Z}(K;N)$. 
In order for the event $\mathcal{O}_1$ to happen, one needs the existence of such a path crossing the region from bottom to top.}
\label{fig:wall_boxes_path}
\end{figure}
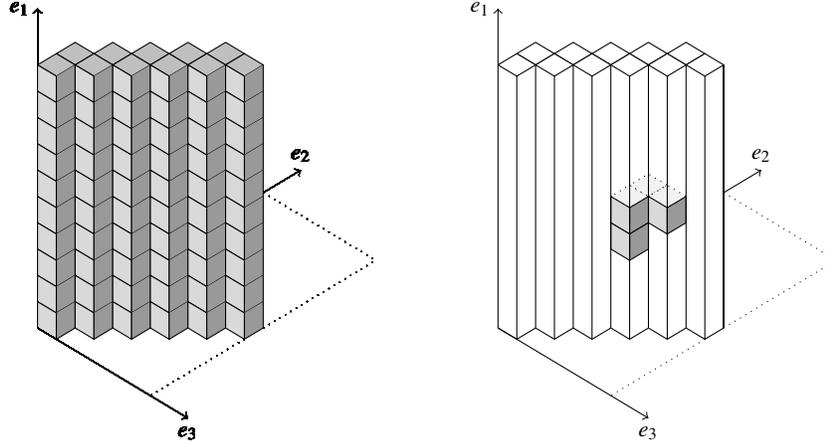
\end{center}

Recall the definition of a path of good boxes as being a directed sequence of sites, each lying at distance $N$ from the preceding one, and such that the corresponding boxes of size $N$ are good (see  \eqref{eq:path_good_boxes}).
In light of Lemma \ref{l:path_of_good}, the occurrence of $\mathcal{O}_1$ entails the existence of a path of good boxes $\{z_0,\ldots,z_T\}\subset\ZZ^n$ with $\pi_{\{1\}}(z_0)=0$ and $\pi_{\{1\}}(z_T)=NK$, and such that each good box  $B(z_i;N)$ is contained in the set $\mathcal{Z}^2(K;N)\subset \mathbb{Z}^n$ defined as
\begin{equation}
\label{eq:z2}
\mathcal{Z}^2(K;N):= \hspace{-.7cm}\bigcup_{\substack{ (t,x)\in\ZZ^2 \colon \\ 0\leq t\leq \lfloor c_o\log K \rfloor,0\leq x\leq K} } \hspace{-1cm} B(p_t+Nx e_1;N).
\end{equation}
Roughly speaking, the set $\mathcal{Z}^2(K;N)$ is the subset of $\mathbb{Z}^n$ that comprises all the sites inside boxes of size $N$ in a portion of the renormalized lattice resembling a thickening of width $N$ of a jagged rectangular region of side $c_o \log(K)$ and height $K$.
The reader might find it useful to consult Figure \ref{fig:wall_boxes_path} in order to clarify the definition of the set $\mathcal{Z}^2(K;N)$ and the definition of the event $\mathcal{O}_1$.
Note however that the picture may be a little bit misleading because for $K$ large, the jagged wall region depicted therein should look very elongated in the $e_1$ direction.

Let $\mathcal{O}_2$ be the event that every site $z\in\mathcal{Z}^2(K;N)$ is  $\omega_I$-open for all the indices $I\in\mathcal{I}(2;n)$ such that $I\cap\{1\}=\varnothing$.
Notice that, for every index $I\in \mathcal{I}(2;n)$ that does not include the coordinate $1$ each of the boxes appearing in the l.r.s.\ of \eqref{eq:z2} projects to a square in $\mathbb{Z}^2_I$ containing $N^2$ sites.
Moreover, using the cyclic nature of the $r_t$, we can conclude that as $t$ runs over the interval $0,\ldots, \lfloor c_o \log K \rfloor$, the amount of different projections into each $\mathbb{Z}^2_I$ that one needs to check in order to determine the occurrence of $\mathcal{O}_2$ does not exceed  $c' \log K$ for some positive universal constant $c' = c'(n,c_o)$ (for instance $ c'=3(n-1)^{-1} c_o$).
See Figure \ref{fig:spiral_proj} for an illustration of these projections.

\begin{center}
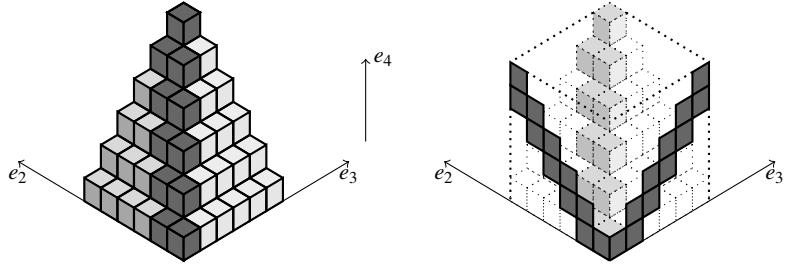
\begin{figure}[htb!]
	\centering
	\begin{tikzpicture}[scale=.22]
	
   				\foreach \x in {0,...,5}
    			{\foreach \i in {0,...,\x}
    			 {\pgfmathparse{.60*\i}
    			    \xdef\y{\pgfmathresult}
    	 		  \draw[fill=black!7!] (\x-\i+2,-2*\x-\y)--(\x-\i+1,-2*\x-\y+.60)--(\x-\i,-2*\x-\y)--(\x-\i+1,-2*\x-\y-.60)--(\x-\i+2,-2*\x-\y);
    				  \draw[fill=black!10!](\x-\i+2,-2*\x-\y)--(\x-\i+2,-2*\x-\y-1.40)--(\x-\i+1,-2*\x-\y-2)--(\x-\i+1,-2*\x-\y-.60);
    				  }}
		\foreach \x in {0,...,5}
			{\foreach \i in {0,...,\x}
			 {\pgfmathparse{.60*\i}
			    \xdef\y{\pgfmathresult}
				\draw[fill=black!15!](-\x+\i,-2*\x-\y)--(-\x+\i+1,-2*\x-\y+.60)--(-\x+\i+2,-2*\x-\y)--(-\x+\i+1,-2*\x-\y-.60)--(-\x+\i,-2*\x-\y);
				\draw[fill=black!35!] (-\x+\i,-2*\x-\y)--(-\x+\i,-2*\x-\y-1.40)--(-\x+\i+1,-2*\x-\y-2)--(-\x+\i+1,-2*\x-\y-.60)--(-\x+\i,-2*\x-\y);

				 \draw[fill=black!60!] (0,-2*\x-.60*\x)--(1,-2*\x-.60*\x+.60)--(2,-2*\x-.60*\x)--(1,-2*\x-.60*\x-.60)--(0,-2*\x-.60*\x);

				 \draw[fill=black!60!] (0,-2*\x-.60*\x)--(0,-2*\x-.60*\x-1.40)--(1,-2*\x-.60*\x-2)--(1,-2*\x-.60*\x-.60)--(0,-2*\x-.60*\x);
				 }}
				 
				 \foreach \x in {1,...,5}{
				 \draw[fill=black!60!] (-1,-2*\x-.60*\x+.60)--(0,-2*\x-.60*\x+.60+.60)--(1,-2*\x-.60*\x+.60)--(0,-2*\x-.60*\x-.60+.60)--(-1,-2*\x-.60*\x+.60);
				 \draw[fill=black!60!] (-1,-2*\x-.60*\x+.60)--(-1,-2*\x-.60*\x-1.40+.60)--(0,-2*\x-.60*\x-2+.60)--(0,-2*\x-.60*\x-.60+.60)--(-1,-2*\x-.60*\x+.60);
				 }
				 
		\foreach \x in {0,...,5}
			{\foreach \i in {0,...,\x}
			 {\pgfmathparse{.60*\i}
			    \xdef\y{\pgfmathresult}
			     \draw[thick] (-\x+\i,-2*\x-\y)--(-\x+\i+1,-2*\x-\y+.60)--(-\x+\i+2,-2*\x-\y)--(-\x+\i+1,-2*\x-\y-.60)--(-\x+\i,-2*\x-\y)--(-\x+\i,-2*\x-\y-1.40)--(-\x+\i+1,-2*\x-\y-2)--(-\x+\i+1,-2*\x-\y-.60);
			    	 \draw[fill=black!60!](\x-\x+2,-2*\x-.60*\x)--(\x-\x+2,-2*\x-.60*\x-1.40)--(\x-\x+1,-2*\x-.60*\x-2)--(\x-\x+1,-2*\x-.60*\x-.60);
				 \draw[thick] (\x-\i+2,-2*\x-\y)--(\x-\i+1,-2*\x-\y+.60)--(\x-\i,-2*\x-\y)--(\x-\i+1,-2*\x-\y-.60)--(\x-\i+2,-2*\x-\y)--(\x-\i+2,-2*\x-\y-1.40)--(\x-\i+1,-2*\x-\y-2)--(\x-\i+1,-2*\x-\y-.60);
				 }}

\draw[->] (1,-15) -- ++(-10,6);
\node[below] at (-9,-9) {$e_2$};
\draw[->] (1,-15) -- ++(10,6);
\node[below] at (11,-9) {$e_3$};
\draw[->] (12,-7.8)--(12,-2.8);
\node[right] at (12,-2.8) {$e_4$};

	\end{tikzpicture}	
	\hspace{.2cm}
	\begin{tikzpicture}[scale=.22]
	
		\foreach \x in {0,...,5}{
				 \draw[thin, dotted, fill=black!15!] (0,-2*\x-.60*\x)--(1,-2*\x-.60*\x+.60)--(2,-2*\x-.60*\x)--(1,-2*\x-.60*\x-.60)--(0,-2*\x-.60*\x);

				 \draw[thin, dotted, fill=black!25!] (0,-2*\x-.60*\x)--(0,-2*\x-.60*\x-1.40)--(1,-2*\x-.60*\x-2)--(1,-2*\x-.60*\x-.60)--(0,-2*\x-.60*\x);
				  \draw[thin, dotted, fill=black!7!] (1,-2*\x-.60*\x-.60) -- ++ (0,-1.4) -- ++ (1,.6) -- ++ (0,1.4) -- ++ (-1,-.6);
				 }
				 
				  \foreach \x in {1,...,5}{
				 \draw[dotted, thin, fill=black!15!] (-1,-2*\x-.60*\x+.60)--(0,-2*\x-.60*\x+.60+.60)--(1,-2*\x-.60*\x+.60)--(0,-2*\x-.60*\x-.60+.60)--(-1,-2*\x-.60*\x+.60);
				 \draw[dotted, thin, fill=black!25!] (-1,-2*\x-.60*\x+.60)--(-1,-2*\x-.60*\x-1.40+.60)--(0,-2*\x-.60*\x-2+.60)--(0,-2*\x-.60*\x-.60+.60)--(-1,-2*\x-.60*\x+.60);
				 }

		\foreach \x in {0,...,5}
			{\foreach \i in {0,...,\x}
			 {\pgfmathparse{.60*\i}
			    \xdef\y{\pgfmathresult}
			     \draw[thin, dotted] (-\x+\i,-2*\x-\y)--(-\x+\i+1,-2*\x-\y+.60)--(-\x+\i+2,-2*\x-\y)--(-\x+\i+1,-2*\x-\y-.60)--(-\x+\i,-2*\x-\y)--(-\x+\i,-2*\x-\y-1.40)--(-\x+\i+1,-2*\x-\y-2)--(-\x+\i+1,-2*\x-\y-.60);
				 \draw[thin, dotted] (\x-\i+2,-2*\x-\y)--(\x-\i+1,-2*\x-\y+.60)--(\x-\i,-2*\x-\y)--(\x-\i+1,-2*\x-\y-.60)--(\x-\i+2,-2*\x-\y)--(\x-\i+2,-2*\x-\y-1.40)--(\x-\i+1,-2*\x-\y-2)--(\x-\i+1,-2*\x-\y-.60);
				 }}
				 
				 \foreach \x in {0,...,5}{
				 \draw[thick, fill=black!60!] (1-\x, -15+2*\x) -- ++ (-1,.6) -- ++ (0,1.4) -- ++(1, -.6) -- ++(0,-1.4);}
				 \foreach \x in {0,...,4}{
				 \draw[thick, fill=black!60!] (-\x, -14.4+2*\x) -- ++ (-1,.6) -- ++ (0,1.4) -- ++(1, -.6) -- ++(0,-1.4);
				 }
				 
				  \foreach \x in {0,...,5}{
				 \draw[thick, fill=black!60!] (1+\x, -15+2*\x) -- ++ (1,.6) -- ++ (0,1.4) -- ++(-1, -.6) -- ++(0,-1.4);}
				 \foreach \x in {0,...,4}{
				 \draw[thick, fill=black!60!] (2+\x, -14.4+2*\x) -- ++ (1,.6) -- ++ (0,1.4) -- ++(-1, -.6) -- ++(0,-1.4);
				 }

\draw[dotted, thick] (0,0) -- ++ (-5,-3);
\draw[dotted, thick] (-5,-5.8) -- ++ (0,-5.6);
\draw[dotted, thick] (2,0) -- ++ (5,-3);
\draw[dotted, thick] (7,-5.8) -- ++ (0,-5.6);
\draw[dotted, thick] (-4,-3.6) -- ++ (5,-3);
\draw[dotted, thick] (6,-3.6) -- ++ (-5,-3);
\draw[->] (1,-15) -- ++(-10,6);
\node[below] at (-9,-9) {$e_2$};
\draw[->] (1,-15) -- ++(10,6);
\node[below] at (11,-9) {$e_3$};
	\end{tikzpicture}
	\caption{On the left: the darker region is the projections of the $\mathcal{Z}^2(K;N)$ into the subspace spanned by $e_2$, $e_3$ and $e_4$.
	The spiral contains $c_o \lfloor \log{K} \rfloor$ boxes of size $N$.
	On the right: the projection of the boxes into $\mathbb{Z}^2_{\{2,4\}}$ and $\mathbb{Z}^2_{\{3,4\}}$. 
	These projections contain no more than $c' \log{K}$ squares with $N^2$ sites (for some constant $c' =c' (c_o,n) >0$). 
	}
	\label{fig:spiral_proj}	
\end{figure}
\end{center}

Therefore, we have
\[
\mathbb{P}(\mathcal{O}_2) \geq \prod_{\substack{ I\in\mathcal{I(}2;n);\\\{1\}\cap I=\varnothing}} {p_I}^{c' \log(K) N^2 } = \exp\Big( c' {N}^2\log{K} \sum_{\substack{ I\in\mathcal{I}(2;n);\\\{1\}\cap I=\varnothing}} \log p_I\Big).
\]
Let us define
\[
\alpha_2:={c' {N}^2} \sum_{\substack{ I\in\mathcal{I}(2;n);\\\{1\}\cap I=\varnothing}} \log(\tfrac{1}{p_I})
\]
so that we have
\[
\mathbb{P}(\mathcal{O}_2) \geq \exp (-\alpha_2 \log{K}) = K^{-\alpha_2}
\]
which is to say that the probability of $\mathcal{O}_2$ is bounded below by a term that is proportional to a negative power of $K$ with exponent $\alpha_2$.

By Lemma \ref{l:path_of_good}, on the event $\mathcal{O}_1$ there exists a path $\gamma:\{0\}\cup [T]\to \mathcal{Z}^2(K;N)$ of sites that are $\omega_{I_j}-$open for all $2\leq j \leq n$ such that $\pi_{\{1\}}(\gamma(0))=0$ and $\pi_{\{1\}}(\gamma(T))=NK$.
On the event $\mathcal{O}_1\cap \mathcal{O}_2$, the sites in $\gamma$ are actually $\omega$-open.
Unfortunately, this path may not start at $o$. In order to fix this issue, let us introduce the event $\mathcal{O}_3$  that all the sites $z\in \mathbb{Z}^n$ with $\pi_{\{1\} }(z)=-1$ and such that $z+e_1\in \mathcal{Z}^2(K;N)$ are $\omega_{I_j}$-open for each $2\leq j \leq n$.
Since there are no more than $cN^{n-1} \log{K}$ such sites, $\mathbb{P}(\mathcal{O}_3)$ is also proportional to a negative power of $K$.
Moreover, on the event $\mathcal{O}_2\cap\mathcal{O}_3$ these sites are $\omega$-open.
 
Now, on the event $\mathcal{O}_1\cap \mathcal{O}_2\cap \mathcal{O}_3$, $o$ and $\gamma(0)$ are connected by a path of $\omega$-open sites $\lambda:\{0\}\cup [J]\to\ZZ^n$, with $\lambda(0)=o$, $\lambda(J) = \gamma(0)$ and  $\pi_{\{1\}}( \lambda(j)) =-1$ for every $j = 1,\ldots, J-1$. 
In other words, we have:
\begin{equation}
\label{eq:o1o2o3}
\mathcal{O}_1\cap \mathcal{O}_2\cap \mathcal{O}_3\subset[ o \leftrightarrow \partial{B}(NK)].
\end{equation}

Having constructed the events $\mathcal{O}_1$, $\mathcal{O}_2$ and $\mathcal{O}_3$ whose occurrence implies the existence of a long $\omega$-open path starting at the origin, we now construct events on which the cluster containing the origin is finite.
This is done by first requiring that the origin in $\mathbb{Z}^2_{\{2,3\}}$ is enclosed by a circuit composed of $\omega_{\{2,3\}}$-closed sites only and then requiring that there exists a set $S$ surrounding the origin in $\mathbb{Z}^{n-2}_{[n]\setminus \{2,3\}}$ whose edges are $\mathcal{P}_{\{2,3\}}(x)$-closed for every for every $x$ inside the circuit.
 
Indeed, let $\mathcal{O}_4$ be the event in which all sites in the square circuit in $\mathbb{Z}^2_{\{2,3\}}$ given by
\[
\partial \bigg(([-2, 4Nc_o\lfloor\log K\rfloor+1]\times[-2, 4Nc_o\lfloor\log K\rfloor+1]) \cap \ZZ_{\{2,3\}}^2 \bigg)
\]
are $\omega_{{\{2,3\}} }$-closed.
Since the perimeter of this circuit is less than $c N \log{K}$, for some $c>0$, the probability that $\mathcal{O}_4$ occurs is bounded below by a negative power of $K$. 
Moreover, on the event $\mathcal{O}_4$, the origin of $\mathbb{Z}^2_{\{2,3\}}$ is surrounded by a $\omega_{\{2,3\}}$-closed circuit, therefore, $\mathcal{V}_{\omega_{\{2,3\}}}(o;\mathbb{Z}^2_{\{2,3\}})$ is finite, which is to say that Condition \emph{i.} in Lemma \ref{lemma nao perc} is satisfied.
Note also that the circuit encloses the projection of $\mathcal{Z}^2(K;N)$ into $\mathbb{Z}^2_{\{2,3\}}$.

Now, similarly to Condition \emph{ii}.\ in Lemma \ref{lemma nao perc}, let $\mathcal{O}_5$ be the event in which there exists $\mathcal{S}\subset \ZZ_{[n]\setminus{\{2,3\}}}^{n-2}$ that surrounds the origin satisfying that $\inf \{\|s\|:s\in\mathcal{S} \}\geq 3NK$ and that each site $s\in\mathcal{S}$ is $\mathcal{P}_{\{2,3\}}(x)-$closed for all $x\in \big([-1, 4Nc_o\lfloor\log K\rfloor]\times[-1, 4Nc_o\lfloor\log K\rfloor]\big) \cap \ZZ_{\{2,3\}}^2 $.
Following exactly the same type of Borel-Cantelli argument as in the proof of Lemma \ref{lemma nao perc 1}, we obtain that  $\mathbb{P}(\mathcal{O}_5)=1$. 

On the event $\mathcal{O}_4\cap\mathcal{O}_5$, Conditions \emph{i.-ii}.\ in the statement of Lemma \ref{lemma nao perc} are satisfied. 
Therefore
\begin{equation}
\label{eq:o4o5}
\mathcal{O}_4\cap\mathcal{O}_5\subset [o \nleftrightarrow \infty].
\end{equation}
Combining \eqref{eq:o1o2o3} and \eqref{eq:o4o5} we get
\[
\PP_{\mathbf{p}}(o \leftrightarrow \partial{B}(NK),~ o \nleftrightarrow \infty)\geq \PP(\mathcal{O}_1\cap\mathcal{O}_2\cap\mathcal{O}_3\cap \mathcal{O}_4\cap \mathcal{O}_5),
\]
Since $\mathcal{O}_1,\mathcal{O}_2, \mathcal{O}_3$, $\mathcal{O}_4$ and $\mathcal{O}_5$ are independent events, the fact that the probabilities of $\mathcal{O}_1$ and $\mathcal{O}_5$ are uniformly  bounded below by a positive constant, and that the remaining events have probability proportional to a negative power of $K$ finishes the proof. 
\qed
\end{proof}

We now indicate a few modifications to the proof of Theorem \ref{t:poly_dec_1} that lead to Theorem \ref{theorem power law projetando em k}.

\begin{proof}[Sketch of the proof for Theorem \ref{theorem power law projetando em k}]

One major ingredient for the proof of power law decay of the truncated connectivity function in the case $k=2$ to hold in a wider range of parameters $\mathbf{p}$ is Lemma \ref{lemma dos caminhos} that has a two-dimensional appealing.
It is not clear to us how to obtain an analogous counterpart for $k\geq 3$.
Moreover, our renormalization arguments that relies on this result provides a suitable $N$ for which the process of good boxes dominates stochastically a supercritical Bernoulli percolation  requiring only that the parameters $p_{I_j}$ stay above $p_c(\ZZ^2)$ (see Lemma \ref{lemma dominacao estocastica}).

In this line of reasoning, our first modification consists in fixing $N=1$ and redefining the notion of \textit{good boxes} for boxes of type $B(y;N)$.
For $N=1$, $B(y;1)$ consists of a single point, i.e., $B(y;1)=\{y\}$. 
We say that $B(y;1)$ is good if $y$ is $\omega_I$-open for all $I\in\mathcal{I}(k;n)$ for which $I\cap \{1\} \neq \varnothing$.

With this notion of \textit{good boxes}, a path of good boxes (as appearing in the statement of Lemma \ref{l:path_of_good}) is simply a path of sites that are $\omega_I$-open for all $I\in\mathcal{I}$ that contains $1$ as an element, \textit{i.e.}, Lemma \ref{lemma dos caminhos}  holds trivially.
Furthermore, Lemma \ref{lemma da cortina chi dependencia} holds with $N=1$. 
In particular, if for all $I\in\mathcal{I}(k;n)$ for which $I\cap\{1\}=\{1\}$ we choose all parameters $p_I<1$ to be close enough to $1$, then stochastic domination as in Lemma \ref{lemma dominacao estocastica} holds with $1-\epsilon>p_c(\ZZ^2)$.

The rest of the proof follows by fixing $N=1$ and repeating the proof of Theorem \ref{t:poly_dec_1} with some minor adaptations. 
\qed
\end{proof}

\begin{acknowledgement}
The three first authors M.A., M.H.\ and B.N.B.L.\ were former PhD students of the fourth author Vladas Sidoravicius. 
An advanced draft of this paper was produced before Vladas passed away in May 2019.
M.A., M.H.\ and B.N.B.L.\ would like to thank Vladas for his great intuition, ideas and enthusiasm that played a fundamental role in our academic trajectories and in obtaining the results in this paper.
The research of M.H.\ was partially supported by CNPq grants `Projeto Universal' (406659/2016-8) and `Produtividade em Pesquisa' (307880/2017-6) and by FAPEMIG grant `Projeto Universal' (APQ-02971-17).  The research of B.N.B.L.\ was supported in part by CNPq grant 305811/2018-5 and FAPERJ (Pronex E-26/010.001269/2016).
We are also very grateful to Alberto Sarmiento for suggesting a strategy for proving Lemma \ref{lemma construcao da base da rede} and to the anonymous referee for pointing out several corrections and for providing valuable suggestions.
\end{acknowledgement}

\section*{Appendix}
\label{ap:linear_algebra}
\addcontentsline{toc}{section}{Appendix}

In this appendix we present a proof of Lemma \ref{lemma construcao da base da rede} that relies on elementary linear algebra arguments.

\begin{proof}[Proof of Lemma \ref{lemma construcao da base da rede}] Let $L:\RR^n\to\RR^{n-2}$ be the linear application: $L(x_1,\ldots,x_n)=(y_1,\ldots,y_{n-2})$ where
$$ y_j = x_j +x_{n-1}+jx_n, \mbox{ for each } 1\leq j\leq  n-2.$$
Let $v_1,v_2\in\RR^n$ be the vectors:
\begin{align*} v_1=&(-1,\ldots,-1,1,0),\\v_2=&(-1,-2,-3,\ldots,-(n-2),0,1).
\end{align*}
Let $U:\RR^2\to\RR^n$ be the linear application $U(x,y)=xv_1+yv_2$, and denote
\begin{align*}
&\Ker L:=\{v\in\RR^n:L v=0\},\\
&\Ran U = \{U u:u\in\RR^2\}.
\end{align*}
We claim that $\Ker L = \Ran U$, and that $\pi_I\circ U$ is injective for all $I\subset [n]$ with $\# I\geq 2$.

In fact, from the definition of $L$ we obtain that $v=(x_1,\ldots,x_n)\in \Ker L$ if and only if $L v=(y_1,\ldots,y_{n-2})$ satisfies $y_j=0$ for all $1\leq j \leq n-2$, and this equality holds if and only if $x_j=-( x_{n-1}+jx_n).$ In particular, $v\in\Ker L$ if and only if
$$v=x_{n-1}v_1+x_nv_2.$$
This shows that $\Ker L= \Ran U$.

To prove the second statement of our claim, we begin by observing the fact that:\\
\noindent \textit{For each $I\subset[n]$ with $\# I= 2$ the linear application $\pi_I\circ U: \RR^2\to \RR^{2}$ is injective.}\\
To see that this is true, let $I=\{i,j\}$, with $1\leq i <j \leq n$.
Then the possibilities for the matrix $\pi_I\circ U$ are:
\[
a)\left(\begin{array}{cc}
    -1 & -i \\
    -1 & -j \\
  \end{array}\right),\;
  b)\left(\begin{array}{cc}
    -1 & -i \\
     1 & 0 \\
  \end{array}\right),\;
c)\left(\begin{array}{cc}
    -1 & -i \\
     0 & 1 \\
  \end{array}\right),\;\mbox{ and }
  d)\left(\begin{array}{cc}
     1 & 0 \\
     0 & 1 \\
  \end{array}\right),
  \]
where $a)$ corresponds to $j\leq n-2$; $b)$ corresponds to $j=n-1$; $c)$ corresponds to $i\leq n-2$ and $j=n$; $d)$ corresponds to $i=n-1$. In any case we have
$|\det \pi_I\circ U|\geq 1$ which implies injectivity.

It follows now that for all $I\subset [n]$ with $\# I\geq 2$ the linear application $\pi_I \circ U:\RR^2\to\RR^{\# I}$ is injective. In fact, if for $u,v\in\RR^2$ we have $\pi_I\circ U u=\pi_I\circ U v$, then in particular, for each $J\subset I$ with $\#J=2$ we have $\pi_J\circ U u=\pi_J\circ U v$ which implies $u=v$.

Since $v_1$ and $v_2$ are linearly independent (over $\RR$), by the Gram-Schmidt process there exists $\tilde{w_1},\tilde{w_2}\in\RR^n$ orthogonal and such that their linear span is equal to the linear span of $v_1$ and $v_2$:
\begin{align*}
\tilde{w}_1&=v_1,\\
\tilde{w}_2&=v_2-\frac{\langle v_1,v_2\rangle}{\langle v_1,v_1\rangle}v_1\\
   &=v_2+\frac{(2-n)}{2}v_1.
\end{align*}
In particular, $w_1= 2\|\tilde{w}_2\|\tilde{w}_1$ and $w_2=2\|\tilde{w}_1\|\tilde{w}_2$ are orthogonal, $\|w_1\|=\|w_2\|$ and both belong to $\ZZ^n$. 
Hence the linear application $A:\ZZ^2\to\RR^n$ given by
\begin{align*}
A(x,y)=&xw_1+yw_2\\
=&U(2x\|\tilde{w}_2\|+y(2-n)\|\tilde{w}_1\|,2y\|\tilde{w}_1\|)
\end{align*}
is such that $\Ran A \subset \Ker L \cap \ZZ^n$. 
Let $H:\ZZ^2\to\ZZ^2$ be the linear application
$$H=\left(
      \begin{array}{cc}
        2\|\tilde{w}_2\| & (2-n)\|\tilde{w}_1\| \\
        0 & 2\|\tilde{w}_1\| \\
      \end{array}
    \right).$$
Then $H$ is injective and $A=U\circ H$. Hence for each $I\subset [n]$ with $\# I\geq2$, the linear application $\pi_I \circ A =\pi_I \circ U \circ H$ is injective, since $H$ and $\pi_I \circ U$ are injective.
Defining
\[
c=\min_{I\subset [n] }\inf_{\substack{x\in\RR^2\\\|x\|=1} } \|\pi_I\circ A x\| >0
\]
completes the proof.  
\qed
\end{proof}

%
%
%

\end{document}